\numberwithin{equation}{section}
	\newtheorem{theorem}{Theorem}[section]
	\newtheorem{corollary}[theorem]{Corollary}
	\newtheorem{conjecture}[theorem]{Conjecture}
	\newtheorem{question}[theorem]{Question}
	\newtheorem{claim}[theorem]{Claim}
	\newtheorem{proposition}[theorem]{Proposition}
	\newtheorem{lemma}[theorem]{Lemma}
	\newtheorem{fact}[theorem]{Fact}
\newenvironment{proofclaim}[1][Proof of claim]{\begin{proof}[#1]}{\end{proof}}
\def\al#1{}
	\renewcommand{\al}[1]{\footnote{\textbf{AL: }#1}} 
\def\eps{{\varepsilon}}
\title{Towards an edge-coloured Corr\'adi--Hajnal theorem}
\author{ Allan Lo\thanks{School of Mathematics, University of Birmingham, Birmingham, B15 2TT, UK. \texttt{s.a.lo@bham.ac.uk}. The research leading to these results was supported by EPSRC, grant no. EP/V002279/1 and EP/V048287/1. There are no additional data beyond that contained within the main manuscript.} \and Ella Williams\thanks{Department of Mathematics, University College London, Gower Street, WC1E 6BT, UK. \texttt{ella.williams.23@ucl.ac.uk}. Research supported by the Martingale Foundation.}}
\date{\today}
\begin{document}

\maketitle

\begin{abstract}
A classical result of Corr\'adi and Hajnal states that every graph~$G$ on $n$ vertices with $n\in~3\mathbb{N}$ and $\delta(G) \ge 2n/3$ contains a perfect triangle-tiling, i.e.,\ a spanning set of vertex-disjoint triangles. 
We explore a generalisation of this result to edge-coloured graphs. 
Let $G$ be an edge-coloured graph on $n$ vertices.
The minimum colour degree~$\delta^c(G)$ of~$G$ is the largest integer~$k$ such that, for every vertex~$v \in V(G)$, there are at least $k$ distinct colours on edges incident to~$v$.
We show that if $\delta^c(G) \ge (5/6 + \varepsilon) n$, then $G$ has a spanning set of vertex-disjoint rainbow triangles. 
On the other hand, we find an example showing the bound should be at least~$5n/7$.
We also discuss a related tiling problems on digraphs, which may be of independent interest.
\end{abstract}

\section{Introduction}

Given graphs $G$ and $H$, a common problem in extremal graph theory is to determine how large the minimum degree of $G$, denoted by $\delta(G)$, must be in order to guarantee that $G$ contains a copy of $H$ as a subgraph. 
In this paper, we consider a generalisation of this type of problem to edge-coloured graphs. An \emph{edge-coloured graph} is a graph $G$ together with an edge-colouring of $G$. We say that $G$ is \emph{properly coloured} if no two adjacent edges in $G$ have the same colour. If all edges have a distinct colour, then $G$ is \emph{rainbow}.
We are interested in finding rainbow subgraphs in edge-coloured graphs. Keevash, Mubayi, Sudakov and Verstra\"{e}te~\cite{KeevashMubayiSudakovVerstraete2007} studied this from a Tur\'{a}n-type perspective, considering how many edges a host properly edge-coloured graph $G$ on $n$ vertices must have to guarantee the existence of a specified rainbow subgraph. In particular, they proved that if $H$ is a non-bipartite graph and $G$ is properly edge-coloured with $e(G) \ge (1+o(1)){\rm ex} (n, H)$, then $G$ contains a rainbow~$H$, where the Tur\'{a}n number ${\rm ex} (n, H)$ denotes the maximum number of edges in an $H$-free $n$-vertex graph.

We instead consider a generalised notion of minimum degree for edge-coloured graphs given as follows. 
Let $G$ be an edge-coloured graph on $n$ vertices. 
Given a vertex $v \in V(G)$, the \emph{colour degree}~$d^c(v)$ of~$v$ is the number of distinct colours on edges incident to~$v$.
The \emph{minimum colour degree}~$\delta^c(G)$ is the minimum $d^c(v)$ over all vertices~$v$ in~$G$. 

Li~\cite{Li2013} proved that if $\delta^c(G) > n/2$, then $G$ contains a rainbow~$K_3$. 
Later, Li, Ning, Xu and Zhang~\cite{LiNingXuZhang2014} showed that, when $n \ge 5$, the properly coloured balanced complete bipartite graph is the unique extremal example. 

\begin{theorem}[Li, Ning, Xu and Zhang~\cite{LiNingXuZhang2014}] \label{thm:rainbowK_3}
Let $G$ be an edge-coloured graph on $n \geq 3$ vertices with $\delta^c(G) \ge n/2$. 
Suppose that $G$ is not a $K_{n/2,n/2}$, $K_4$ or $K_4 - e$. 
Then $G$ contains a rainbow~$K_3$. 
\end{theorem}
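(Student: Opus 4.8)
The plan is to assume for contradiction that $G$ has no rainbow $K_3$ and to strip away cases until only the genuinely tight one remains, which is handled by a \emph{rainbow-neighbourhood} analysis. First, the easy reductions. Since $d^c(w)\le d(w)$ for every vertex $w$, we have $\delta(G)\ge n/2$. If $\delta^c(G)>n/2$ --- in particular whenever $n$ is odd --- then Li's theorem (quoted above) already produces a rainbow $K_3$. If $G$ is bipartite, then $\delta(G)\ge n/2$ forces $G=K_{n/2,n/2}$, and since each vertex then has degree exactly $n/2$ the hypothesis $\delta^c(G)\ge n/2$ forces the colouring to be proper, so $G$ is an excluded graph --- contradiction. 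If $G$ is properly edge-coloured, then any triangle of $G$ receives three distinct colours and is therefore rainbow, so we are done once $G$ has a triangle; but a non-bipartite graph with $\delta(G)>2n/5$ has a triangle by the Andr\'asfai--Erd\H{o}s--S\'os theorem (indeed $G$ is pancyclic by Bondy's theorem). After checking $n\in\{3,4\}$ by hand --- this is where one verifies that suitable $2$-colourings of $K_4$ and $K_4-e$ are non-bipartite, satisfy the hypotheses, and have no rainbow triangle --- we are reduced to: $n=2k\ge 6$, $\delta^c(G)=k$ exactly, $G$ non-bipartite (hence containing triangles) and not properly edge-coloured, and we must reach a contradiction.

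Now fix a vertex $v$ with $d^c(v)=\delta^c(G)=k$, fix $k$ edges at $v$ of pairwise distinct colours, let $S=N^*(v)$ be the set of their other endpoints (so $|S|=k$), write $c_v(u)$ for the colour of the chosen edge $vu$, and set $W:=V(G)\setminus(\{v\}\cup S)$, so $|W|=k-1$. Because $G$ has no rainbow triangle, every edge $uu'$ inside $S$ satisfies $\mathrm{col}(uu')\in\{c_v(u),c_v(u')\}$; orienting $uu'$ towards the endpoint whose spoke-colour it repeats yields an orientation of $G[S]$ with no directed triangle, since a directed triangle $u\to u'\to u''\to u$ would be rainbow. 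Counting the colours at a vertex $u\in S$ --- namely $c_v(u)$, one colour $c_v(u')$ for each out-neighbour $u'$ of $u$ in $G[S]$, and at most one new colour per vertex of $W$ --- gives
\[
 d^c(u)\ \le\ 1+d^{+}_{G[S]}(u)+|W|\ =\ k+d^{+}_{G[S]}(u),
\]
and since $d^c(u)\ge k$ this only yields $d^{+}_{G[S]}(u)\ge0$: the estimate is exactly tight.

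The crux is to manufacture a contradiction from this tightness. The displayed inequality improves --- forcing $d^{+}_{G[S]}(u)\ge1$ --- whenever some colour at $u$ is already accounted for before the $W$-edges are reached, e.g.\ when $u$ has a non-neighbour in $W$, when $d^c(u)>k$, or when some $W$-edge at $u$ repeats a colour already seen at $u$. If such slack accumulates at enough vertices of $S$ that all of them have positive out-degree, then $G[S]$ (which is dense, as $\delta(G)\ge k$) has a shortest directed cycle carrying a chord, hence a directed triangle, hence a rainbow $K_3$. In the opposite, slack-free situation every vertex of $S$ has colour degree exactly $k$, is complete to $W$, and has rainbow $W$-edges avoiding the colours already seen at it; the displayed equation then gives $d^{+}_{G[S]}(u)=0$ for all $u\in S$, so $S$ is independent and every $u\in S$ has neighbourhood exactly $\{v\}\cup W$. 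Thus $G$ is a spanning $K_{k,k}$ with parts $S$ and $\{v\}\cup W$, together with some graph on $\{v\}\cup W$: if that graph is empty then $G=K_{k,k}$, excluded; otherwise it has an edge, giving triangles through $S$, and imposing the no-rainbow-triangle condition across all of them forces the edges from each fixed vertex of $S$ to $\{v\}\cup W$ to use very few colours, contradicting $\delta^c(G)=k\ge 3$. (Possible disconnectedness of the graph on $\{v\}\cup W$ and a few boundary effects are dealt with by re-running the analysis from a second suitable base vertex.)

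The main obstacle is precisely this last, tight case $\delta^c(G)=n/2$ (necessarily $n$ even): the colour count carries no built-in slack, so the rigidity of the colouring on the large sets $N^*(v)$ cannot be read off one vertex at a time but has to be extracted from the interplay of several rainbow neighbourhoods together with the abundance of triangles and the presence of a repeated colour. This is exactly why the clean hypothesis $\delta^c(G)\ge n/2$ needs more care than Li's $\delta^c(G)>n/2$, and why the exceptional graphs are $K_{n/2,n/2}$ together with the two small graphs $K_4$ and $K_4-e$ (which one must also check do satisfy the hypotheses, while $n\ge5$ rules them out).
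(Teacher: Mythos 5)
Your setup---orienting $G[S]$ on the rainbow neighbourhood $S=N^*(v)$ of a tight vertex $v$ and counting colours at each $u\in S$---is related in spirit to the paper's (which orients $N(x)$ for a vertex $x$ of \emph{maximum degree} after passing to an edge-coloured critical subgraph), but there are concrete gaps. The most serious is in the case where every $u\in S$ has $d^+_{G[S]}(u)\ge1$: you claim a shortest directed cycle ``carries a chord, hence a directed triangle.'' A shortest directed cycle never carries a chord (either orientation of a chord shortcuts the cycle), so what you would actually need is that $G[S]$ has no chordless cycle of length $\ge4$; and $G[S]$ is not known to be anything like that dense, since $|S|=k$, $|W|=k-1$, so a vertex $u\in S$ may have as few as $d(u)-|W|-1\ge0$ neighbours inside $S$. ``All out-degrees $\ge1$'' yields only a directed cycle, not a triangle (the cyclically oriented $C_5$ is a counterexample). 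The paper avoids this trap by not looking for directed cycles at all: it averages to find a vertex $y$ of the oriented digraph with $d^-(y)\ge d^+(y)$, combines this with the bound $d^-(y)\le d(y)-\delta^c(G)$ (which uses the star structure of colour classes coming from criticality), and reads off $\delta^c(G)\le n/2$ with forced equalities.

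Your case split is also not exhaustive, and the endgame is asserted rather than proved. The negation of ``every $u\in S$ has $d^+(u)\ge1$'' gives only that \emph{some} $u_0\in S$ has $d^+(u_0)=0$, hence that this $u_0$ is slack-free; your slack-free analysis, however, quantifies over all of $S$ to conclude $S$ is independent, so the mixed case (some slack-free vertices, some not) is unaddressed. Moreover, once the near-$K_{k,k}$ shape emerges, the claim that an extra edge on $\{v\}\cup W$ ``forces the edges from each fixed vertex of $S$ to $\{v\}\cup W$ to use very few colours, contradicting $\delta^c(G)=k\ge3$'' is not established. Since you never pass to an edge-coloured critical subgraph, colour classes in your $G$ need not be disjoint unions of stars, which is precisely the structural input the paper's proof exploits (through the inequality $d_{c(xy)}(x)+d_{c(xy)}(y)-2\le\Delta(G)-\delta^c(G)$ and its subsequent equality analysis) to close off this final configuration. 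The remark about ``re-running the analysis from a second suitable base vertex'' would need to be made precise before it can carry the remaining cases.
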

This result was further generalised by Czygrinow, Molla and Nagle~\cite{CzygrinowMollaNagle}, who showed that if $\delta^c(G) > (1- 1/r)n$, then $G$ contains a rainbow~$K_{r+1}$. Together with Oursler~\cite{Czygrinow2019even,CZYGRINOW2021odd}, they also proved that for a fixed $\ell$ and sufficiently large $n$, if $\delta^c(G)\geq (n+1)/2$ then $G$ contains a rainbow cycle of length $\ell$, and that this bound can be improved to $\delta^c(G)\geq(n+5)/3$ when $\ell$ is even. 
We give an alternative proof of Theorem~\ref{thm:rainbowK_3} in Section~\ref{sec:rainbowK_3}.

For a graph~$H$, an \emph{$H$-tiling} is a collection of vertex-disjoint copies of~$H$.  
An $H$-tiling is \emph{perfect} if it is spanning. 
A classical result of Corr\'adi and Hajnal~\cite{CorradiHajnal1963} states that if $G$ is a graph on $n$ vertices with $n \equiv 0 \pmod{3}$ and $\delta(G) \ge 2n/3$, then $G$ contains a perfect $K_3$-tiling. 
Hajnal and Szemer\'edi~\cite{HajnalSzemeredi1970} generalised this result for perfect $K_r$-tilings. 
In this paper, we consider the following variation of $H$-tilings for edge-coloured graphs. 
A \emph{rainbow-$H$-tiling} is a collection of vertex-disjoint rainbow~$H$.
    Note that colour may repeat between different copies of~$H$. 
A rainbow-$K_3$-tiling is a properly coloured $K_3$-tiling.

For an edge-coloured graph $G$ on $n$ vertices,
Hu, Li and Yang~\cite{HuLiYang2020} proved that if $\delta^c(G) \ge (n+2)/2$ and $n \ge 20$, then $G$ contains two vertex-disjoint rainbow~$K_3$.
This was extended to all $n \ge 6$ by Chen, Li and Ning~\cite{ChenLiNing2022}.
Hu, Li and Yang~\cite{HuLiYang2020} conjectured the following on the minimum colour degree needed for a rainbow-$K_3$-tiling.

\begin{conjecture}[Hu, Li and Yang~\cite{HuLiYang2020}]
\label{conj:HLY}
Let $G$ be an edge-coloured graph on $n$ vertices with $\delta^c(G) \ge (n+k)/2$.
Then $G$ contains a rainbow-$K_3$-tiling of size~$k$. 
\end{conjecture}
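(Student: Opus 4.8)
The plan is to argue by extremality. Let $\mathcal{T}$ be a rainbow-$K_3$-tiling in $G$ of maximum size $t$, and suppose for contradiction that $t < k$; set $U = V(G)\setminus V(\mathcal{T})$, so that $G[U]$ contains no rainbow triangle. We may assume $k \le \lfloor n/3\rfloor$ (otherwise no tiling of size $k$ even fits), and then $t \le k-1$ forces $|U| = n - 3t \ge 3$, so Theorem~\ref{thm:rainbowK_3} applies to $G[U]$. In the generic case it gives $\delta^c(G[U]) < |U|/2$, so some vertex $u \in U$ sees fewer than $|U|/2$ colours inside $U$; the exceptional graphs $K_{|U|/2,|U|/2}$, $K_4$, $K_4-e$ are handled separately (the two four-vertex cases cannot occur when $n\in 3\mathbb{N}$). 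Consequently $u$ sees more than $\delta^c(G) - |U|/2 \ge (n+k)/2 - (n-3t)/2 = (k+3t)/2$ distinct colours on its (at most $3t$) edges into $V(\mathcal{T})$, so on average it sees strictly more than $3/2$ colours on the three edges to each triangle of $\mathcal{T}$.

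Now I would look for an \emph{augmentation}: a set of vertices drawn from $U$ and from a single triangle $T_i\in\mathcal{T}$ that can be repartitioned into two vertex-disjoint rainbow triangles, contradicting the maximality of $\mathcal{T}$. The natural candidate is to take $u$ with two vertices $x,y$ of some $T_i = \{x,y,z\}$ to form a rainbow triangle $\{u,x,y\}$, and then a second rainbow triangle on $z$ together with two further vertices of $U\setminus\{u\}$. Since $u$ is colour-rich towards $V(\mathcal{T})$, an averaging argument over the $t$ triangles should produce a $T_i$ on which $u$ sees enough suitably distinct colours to complete $\{u,x,y\}$; and since $z$ has large colour degree while $|U\setminus\{u\}|\ge 2$, deleting the few colours that could clash with $\{u,x,y\}$ should still leave a rainbow triangle through $z$ inside $U\cup\{z\}$. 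Making the two choices simultaneously consistent — so that within each new triangle all three colours differ, which, as the interaction of the rainbow triangle $\{x,y,z\}$ with the three $u$-edges shows, does not follow merely from $u$ seeing many colours on $T_i$ — is the combinatorial core of the argument.

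I expect the decisive obstacle to lie in the regime where $|U|$ is small, that is, $t$ close to $k-1$ and $k$ close to $n/3$: there the surplus $(k+3t)/2$ is slim compared with the $3t$ available edges, the set $U\setminus\{u\}$ may contain only two vertices, and the exceptional structures from Theorem~\ref{thm:rainbowK_3} require a bespoke treatment. This is more than a technicality — the $5n/7$ lower bound quoted in the abstract for perfect rainbow-$K_3$-tilings, which is exactly the case $k = n/3$ with $\delta^c(G)\ge 2n/3$, shows that Conjecture~\ref{conj:HLY} must fail for $k$ too close to $n/3$; so the honest target is to establish it for $k\le(1/3-\varepsilon)n$, or for bounded $k$ by iterating the augmentation step a constant number of times. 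The base cases are already in hand: $k=1$ is Theorem~\ref{thm:rainbowK_3} (equivalently Li~\cite{Li2013}), and $k=2$ is due to Hu--Li--Yang~\cite{HuLiYang2020} and Chen--Li--Ning~\cite{ChenLiNing2022}. For the full linear range one would presumably have to replace the single augmentation by an absorption argument, together with a separate analysis of the colourings that are close to the extremal configurations.
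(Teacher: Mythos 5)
Be careful about what this ``statement'' actually is. It is a \emph{conjecture}, attributed to Hu, Li and Yang, and the paper does not prove it --- it \emph{disproves} it on a large part of its range. Proposition~\ref{prop:K_3construction} exhibits, for every $k$ with $n/9 \le k \le n/3$ and $n \equiv 2k \pmod 7$, an edge-coloured graph $G$ on $n$ vertices with $\delta^c(G) = (n+12k)/7 - 1$ and no rainbow-$K_3$-tiling of size $k$. Since $(n+12k)/7 - 1 \ge (n+k)/2$ as soon as $k \ge (5n+14)/17$, Conjecture~\ref{conj:HLY} is false for all $k \gtrsim 5n/17$. The expected response to this statement is therefore a counterexample, not a proof.

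You do sense some of this --- you point to the $5n/7$ lower bound for perfect rainbow-$K_3$-tilings and conclude that the conjecture must fail near $k = n/3$ --- and you propose scaling the target back to $k \le (1/3-\varepsilon)n$. But that range is still too ambitious: $5/17 \approx 0.294 < 1/3$, so the counterexample of Proposition~\ref{prop:K_3construction} already lives strictly inside your proposed ``honest'' regime. Any theorem along your lines cannot reach $k$ beyond roughly $5n/17$; and for $k$ below that threshold, whether the conjecture holds is genuinely open and not resolved in the paper.

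On the proof sketch itself, the opening is sound --- take a maximum rainbow-$K_3$-tiling of size $t<k$, apply Theorem~\ref{thm:rainbowK_3} to $G[U]$ to find a vertex $u$ that is colour-poor inside $U$, and deduce that $u$ sees more than $(k+3t)/2$ distinct colours on its at most $3t$ edges into $V(\mathcal{T})$. But the augmentation step, which you yourself flag as ``the combinatorial core'', is exactly where the argument must break, and the construction tells you why: in it, every rainbow triangle through a vertex of the large class $X$ must use \emph{two} vertices of the small class $Y$, so no amount of colour-richness at a single leftover vertex can be converted into two disjoint rainbow triangles once $Y$ is exhausted. The obstruction is global (a space barrier in $Y$), not the local colour-clash you are trying to route around, which is precisely why extremal constructions of this shape defeat one-step augmentation arguments. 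If you wish to salvage something, restrict to $k < 5n/17$, where the question remains open, or to bounded $k$ (the cases $k=1,2$ are Theorem~\ref{thm:rainbowK_3} and Hu--Li--Yang/Chen--Li--Ning respectively); but state clearly that the conjecture as written is refuted by Proposition~\ref{prop:K_3construction}.
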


We give a construction showing that this conjecture is false when $k \ge 5n/17$.

\begin{proposition} \label{prop:K_3construction}
Let $n,k \in \mathbb{N}$ with $n/9 \le k \le n/3$ and $n \equiv 2k \pmod{7}$. 
Then there exists an edge-coloured graph $G$ on $n$ vertices with $\delta^c(G) = (n+12k)/7-1$,
 which does not contain a rainbow-$K_3$-tiling of size~$k$. 
\end{proposition}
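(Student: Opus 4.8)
The plan is to produce $G$ as an \emph{explicit} edge-coloured graph built on a vertex partition into a constant number of parts, and then to verify the two required properties directly: (i) $\delta^c(G)=(n+12k)/7-1$, and (ii) there is no rainbow-$K_3$-tiling of size $k$. Throughout I write $a:=(n-2k)/7$, which is a non‑negative integer by the hypotheses $n\equiv 2k\pmod 7$ and $k\le n/3$; note $n=7a+2k$ and the target value is $\delta^c(G)=a+2k-1$. The congruence $n\equiv 2k\pmod 7$ and the range $n/9\le k\le n/3$ are exactly what is needed to make the part sizes below non-negative integers.

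\textbf{The construction.} I would take $V(G)=A\cup B$, where $A$ is a ``core'' of size roughly $2k$ (a small number of parts whose sizes are multiples of $k$, e.g.\ a clique and/or a balanced complete bipartite piece) and $B$ is a ``padding'' of size roughly $7a$, taken to be (the blow-up of) a triangle-free graph so that $G[B]$ contains no triangle at all. All edges inside each part and between the parts are present according to a fixed blow-up pattern, and the colouring is chosen as follows. On the edges incident to the core $A$ I deliberately use a few \emph{large monochromatic classes} (spanning bipartite-type subgraphs), arranged so that every triangle of $G$ meeting $A$ in at least two vertices is forced to repeat a colour, hence is not rainbow. All other edges — in particular the edges of $G[B]$ and the ``generic'' edges from $B$ to $A$ — are coloured with fresh colours, as close to properly as these monochromatic constraints allow. (It is essential that the colouring be improper: since a properly edge-coloured graph has every triangle rainbow, the Corrádi–Hajnal bound would immediately force $\delta^c\le 2n/3-1<5n/7$, so the improper monochromatic classes are unavoidable and are precisely what pushes the bound above $2n/3$.)

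\textbf{No rainbow-$K_3$-tiling of size $k$.} I would argue via a fractional vertex cover of the hypergraph whose edges are the rainbow triangles of $G$: exhibit a weighting $w\colon V(G)\to\{0,\tfrac12,1\}$ with $\sum_{v}w(v)<k$ and $\sum_{v\in T}w(v)\ge 1$ for every rainbow triangle $T$. The weight is concentrated on the core: part of $A$ gets weight $1$ and part gets weight $1/2$, while $B$ gets weight $0$. Since $G[B]$ is triangle-free, a rainbow triangle disjoint from the weight‑$1$ part has at least two vertices in $A$, and the design of the monochromatic classes guarantees those two vertices in fact lie in the weight‑$\tfrac12$ part, so $T$ has weight $\ge 1$; combined with the case that $T$ meets the weight‑$1$ part, every rainbow triangle has weight $\ge 1$. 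As $k$ vertex-disjoint rainbow triangles would have total weight $\ge k>\sum_v w(v)$, no such family exists. (Equivalently: every rainbow triangle uses at least two vertices of the weight‑$\tfrac12$ core, which is too small to support $k$ disjoint triangles.)

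\textbf{Computing $\delta^c(G)$.} Here one checks the parts one at a time. A vertex of the padding $B$ sees a controlled number of colours inside its own part (dictated by the blow-up pattern) plus essentially all of its colours towards the core $A$; the sizes are tuned so that this sum equals exactly $a+2k-1$. A core vertex has very large degree but the large monochromatic classes cut its colour degree down; again the part sizes are chosen so that a core vertex still sees at least $a+2k-1$ colours. A short case analysis then yields $\delta^c(G)=a+2k-1=(n+12k)/7-1$, and the one-line comparison $(n+12k)/7-1\ge(n+k)/2\iff k\ge(5n+14)/17$ shows this refutes Conjecture~\ref{conj:HLY} exactly in the stated range.

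\textbf{Main obstacle.} The difficulty is the tension between (i) and (ii): killing enough rainbow triangles forces large monochromatic colour classes, which tends to depress colour degrees (and naively collapses the colour degree of core vertices to something tiny), while keeping $\delta^c$ as high as $(n+12k)/7-1$ forces the padding $B$ to be large relative to the core and forces the monochromatic classes to be placed very carefully (e.g.\ so that core vertices recover their colour degree from inside the core rather than from $B$). Getting both simultaneously — and in particular obtaining the exact constant $12/7$ together with the factor $7$ — is where the precise choice of part sizes and blow-up pattern comes from, and where essentially all of the computation lies; the rest of the proof is the routine verification that the chosen sizes are integral in the stated range and that the two properties hold.
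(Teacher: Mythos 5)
Your proposal describes the \emph{shape} of a construction rather than the construction itself, and all of the quantitative content is deferred: you never fix the number of parts, their sizes, which pairs of parts are joined completely, where the large monochromatic classes go, or what the ``blow-up pattern'' is. Both verifications you promise --- that $\delta^c(G)=(n+12k)/7-1$ and that every rainbow $K_3$ is hit by your vertex weighting --- depend entirely on these unspecified choices, and you acknowledge as much (``where essentially all of the computation lies''). So what you have is a plausible-sounding research plan, not a proof. For comparison, the paper gives an explicit three-part vertex partition $X\cup Y\cup Z$ with $|X|=6(n-2k)/7$, $|Y|=(18k-2n)/7-1$, $|Z|=3(n-2k)/7+1$, where $G[X]$ is a rainbow balanced complete bipartite graph (so $X$ plays the role of your triangle-free padding), $G[Y\cup Z]$ is a rainbow clique, the only $X$--$(Y\cup Z)$ edges go to $Y$, and each $y\in Y$ sends a monochromatic star to $X$; one then reads off $\delta^c(G)$ from the three vertex classes and, because every rainbow triangle meeting $X$ uses exactly one vertex of $X$ and two of $Y$, directly bounds the number of $X$-vertices a tiling can cover by $\lfloor|Y|/2\rfloor$, leaving at least $n-3k+1$ uncovered --- no fractional cover needed.

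One more concrete warning about the part of your plan that \emph{is} specified: the $\{0,\tfrac12,1\}$ weighting is not the right one even for the paper's construction. Setting $w\equiv 0$ on $X$, $w\equiv\tfrac12$ on $Y$, $w\equiv 1$ on $Z$ does make every rainbow triangle have weight at least $1$, but the total weight is $|Z|+|Y|/2 = 2(n-2k)/7 + k + 1/2 \ge k$, so you cannot conclude. The correct choice is $w\equiv\tfrac13$ on $Z$ (all rainbow triangles still get weight $\ge 1$, and the total becomes $k-\tfrac16<k$), or, more simply, the paper's direct counting argument, which avoids fractional covers altogether. The lesson is that even once a construction is fixed, getting the exact constant in the claimed bound requires this sort of tight bookkeeping --- which is exactly the content your proposal omits.
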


In particular, when $k = n/3$, there exists an edge-coloured graph~$G$ on $n$ vertices with $\delta^c(G)=5n/7-1$ without a perfect rainbow-$K_3$-tiling.
We generalise the construction for perfect rainbow-$K_r$-tilings.

\begin{proposition} \label{prop:extremalconstruction}
Let $r,m \in \mathbb{N}$ with $m \equiv 0 \pmod{r}$. 
Then there exists an edge-coloured graph $G$ on~$(r^2 -2)m$ vertices with $\delta^c(G) = (r^2-r-1)m-1 $, which does not contain a perfect rainbow-$K_r$-tiling. 
\end{proposition}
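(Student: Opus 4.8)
The plan is to give an explicit construction, generalising to all~$r$ the edge-coloured graph underlying Proposition~\ref{prop:K_3construction} in the case $k=n/3$ (which is exactly the $r=3$ instance of the present statement). I would take $V(G)$ to be a disjoint union of blocks — the natural candidates being either $r^2-2$ blocks of size~$m$, or, mirroring the split $(r^2-2)m = r(r-1)m+(r-2)m$, a family of $r$ ``large'' blocks of size $(r-1)m$ together with $r-2$ ``small'' blocks of size~$m$. The graph $G$ should be (essentially) complete on $V(G)$, so all the work sits in the colouring: colour each edge by a rule depending only on the (ordered) pair of blocks it joins, together with a fixed internal colouring of each block, chosen so that

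(a) every vertex~$v$ sees exactly $(r^2-r-1)m-1 = n-1-(r-1)m$ distinct colours, the ``missing'' $(r-1)m$ colours arising from a deliberately engineered amount of repetition among the colours on $v$'s cross-block edges; and

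(b) a copy of~$K_r$ in~$G$ is rainbow only when the multiset of blocks it meets lies in a restricted family~$\mathcal{F}$ of admissible block-profiles (for instance: it meets $r$ distinct blocks forming a line/flat of some design on the block index set, while every within-block copy and every copy with two vertices in one block is automatically non-rainbow because the relevant cross-block colours coincide).

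Granting (a) and (b), the heart of the argument is to rule out a partition of $V(G)$ into $n/r$ rainbow~$K_r$'s. By (b), such a partition would yield a multiset of $n/r$ profiles from~$\mathcal{F}$ in which each block is ``used'' exactly as many times as it has vertices. I would therefore choose the colouring, and hence~$\mathcal{F}$, precisely so that no such perfectly balanced selection of profiles exists: this is where the arithmetic of $r^2-2$ should enter (e.g.\ $r\nmid r^2-2$), interacting with the structure of~$\mathcal{F}$ to produce a short counting or divisibility contradiction. In effect, ``no perfect rainbow-$K_r$-tiling'' reduces to the statement that an auxiliary $r$-uniform hypergraph $\mathcal{F}$ on the blocks has no perfect balanced cover, which is then checked by a direct computation.

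Step (a), once the colouring is pinned down, is a routine case analysis: for each type of block one counts the colours leaving a generic vertex and verifies the total is exactly $(r^2-r-1)m-1$, and the count must be exact (not merely a lower bound), since the vertices attaining equality are what fix the constant. The real obstacle is the joint design in (a)+(b): the colouring must be ``wasteful'' enough that rainbow~$K_r$'s are genuinely confined to~$\mathcal{F}$ — this is what lets the construction beat the Hajnal--Szemer\'edi-type value $(1-1/r)n-1$ one would get simply by rainbow-colouring an extremal $K_r$-tiling-free graph — yet ``efficient'' enough that no vertex drops below colour degree $(r^2-r-1)m-1$. Making these two requirements meet at exactly the claimed parameters, and picking~$\mathcal{F}$ so that the balanced-cover obstruction goes through, is essentially the whole difficulty; everything else is bookkeeping.
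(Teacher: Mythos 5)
Your proposal is a strategy outline, not a proof: it never pins down the construction, and you yourself flag that ``making these two requirements meet at exactly the claimed parameters \ldots{} is essentially the whole difficulty; everything else is bookkeeping.'' That remaining difficulty is the entire content of the proposition, so what you have written does not establish it.

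Two more specific points. First, the obstruction you are aiming for --- a ``balanced cover'' impossibility on an auxiliary block hypergraph, driven by a divisibility relation such as $r\nmid r^2-2$ --- is the wrong shape. Since $m\equiv 0\pmod r$, the total number of vertices $(r^2-2)m$ is automatically divisible by~$r$, so no global divisibility argument on the block multiset can work on its own; the hypothesis on~$m$ is there precisely to kill such obstructions and make the question nontrivial. Second, your candidate block decompositions (either $r^2-2$ equal blocks of size~$m$, or $r$ blocks of size $(r-1)m$ together with $r-2$ blocks of size~$m$) do not match what is needed. The paper's construction uses three parts $X,Y,Z$ of sizes $(r-1)m$, $(r-1)^2m-1$, $(r-2)m+1$, with $G[X]$ a rainbow complete $(r-1)$-partite graph (classes of size~$m$), $G[Y\cup Z]$ a rainbow clique, no $X$--$Z$ edges, and every $X$--$Y$ edge $xy$ coloured by its $Y$-endpoint~$y$. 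These choices give $\delta^c(G)=(r^2-r-1)m-1$ immediately, and they confine rainbow $K_r$'s in a much sharper way than your~$\mathcal{F}$: any rainbow~$K_r$ meeting~$X$ has \emph{exactly one} vertex in~$X$ (two $X$-vertices sharing a $Y$-neighbour see equal colours to it, and $G[X]$ alone has no $K_r$) and its remaining $r-1$ vertices in~$Y$ (there are no $X$--$Z$ edges). The conclusion is then a one-line Hall-type shortage, $|Y|=(r-1)^2m-1<(r-1)|X|$, not a hypergraph covering argument. In short, you have identified roughly the right genre of construction (generalise the $r=3$ extremal graph, confine rainbow cliques by a block-dependent colouring), but you have not produced the colouring, not verified the colour degree, and are steering toward a parity/divisibility contradiction that cannot arise under the hypotheses; the actual proof replaces all of that with a direct count.
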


On the other hand, we give an upper bound on the minimum colour degree threshold for the existence of perfect rainbow-$K_r$-tilings.

\begin{theorem} \label{thm:main}
For all $r \in \mathbb{N}$ with $r \ge 3$ and $\eps > 0$, there exists an integer $n_0 = n_0(r,\eps)$ such that every edge-coloured graph $G$ on $n \ge n_0$ vertices with $n \equiv 0 \pmod{r}$ and 
\begin{align*}
\delta^c(G) \ge 
	\begin{cases}
	(5/6 + \varepsilon) n & \text{if $r = 3$,}\\
	(2+\sqrt{13/2}+ \eps)n/5 \approx(0.9099+\eps)n & \text{if $r =4$,}\\
	\left(1- \frac{1}{(2r-3)r}+ \eps \right) n & \text{if $r \ge 5$,}
	\end{cases}
\end{align*}
 contains a perfect rainbow-$K_r$-tiling. 
\end{theorem}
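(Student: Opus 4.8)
The natural framework is the absorption method combined with a regularity/fractional-tiling argument, following the now-standard template for minimum-degree tiling theorems (Kühn--Osthus, Lo--Markström, etc.), adapted to the colour-degree setting. The plan is to split $V(G)$ into an \emph{absorbing set} $A$ together with a \emph{near-perfect tiling} of the remainder, and then use $A$ to mop up the leftover vertices. Concretely, I would first prove an \emph{absorbing lemma}: for every small set $S$ of vertices (say $|S| \le r$ with $r \mid |S|$), there are ``many'' constant-size gadgets $G_S \subseteq V(G)$ such that both $G_S$ and $G_S \cup S$ admit perfect rainbow-$K_r$-tilings. Here the colour-degree hypothesis is used to find, around any $r$-set $S$, enough common rainbow structure; the extra $\eps n$ slack gives the polynomial count needed for a random selection of absorbers that can simultaneously swallow any $o(n)$ leftover vertices. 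A random greedy choice then produces a single absorbing set $A$ of size $o(n)$ with the property that $G[A \cup W]$ has a perfect rainbow-$K_r$-tiling for every $W \subseteq V(G) \setminus A$ with $|W| \le \eps' n$ and $r \mid |A \cup W|$.

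The second ingredient is an \emph{almost-perfect tiling lemma}: every edge-coloured graph $G'$ (namely $G \setminus A$) with the stated colour-degree bound has a rainbow-$K_r$-tiling covering all but at most $\eps' n$ vertices. I would obtain this via the coloured regularity method: pass to a reduced graph on the cluster set, transfer the colour-degree condition to an appropriate ``colourful minimum degree'' condition on the reduced graph, and then apply a fractional/weighted version of the Hajnal--Szemer\'edi-type argument to the reduced (di/multi-)graph to get an almost-spanning fractional $K_r$-tiling, which lifts to an almost-spanning rainbow-$K_r$-tiling of $G'$ by a standard within-cluster greedy embedding. This is where the precise numerical thresholds come from: one needs the reduced-graph tiling density to match $1 - 1/((2r-3)r)$ for $r \ge 5$, the quadratic-root value for $r=4$, and $5/6$ for $r=3$, which presumably is exactly the content of the ``related tiling problem on digraphs'' the abstract advertises. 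Finally, combine the two: apply the almost-perfect tiling lemma to $G \setminus A$, let $W$ be the uncovered vertices (a set of size $\le \eps' n$, with $r \mid |A \cup W|$ automatic since the tiling and $G$ are both divisible by $r$), and absorb $W$ into $A$.

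The main obstacle is the almost-perfect tiling lemma, and specifically pinning down the correct extremal threshold for the auxiliary digraph/oriented-graph tiling problem: a rainbow $K_r$ is \emph{not} simply detected by a graph-theoretic density condition, because the colour constraint couples the $\binom{r}{2}$ edges, so the reduced object is genuinely a coloured (or ``conflict'') structure rather than an ordinary graph, and the Hajnal--Szemer\'edi machinery does not apply off the shelf. I expect the bulk of the work --- and the reason the paper isolates a digraph tiling statement of ``independent interest'' --- to be in reformulating ``a cluster $r$-tuple supports a rainbow $K_r$'' as a tractable combinatorial condition (roughly: after orienting/weighting edges by a colour-selection scheme, one needs a transversal-like structure), and then proving the sharp minimum-degree tiling result for that structure. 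The case $r=3$ should be cleanest: a rainbow triangle across three clusters needs three pairwise colour-disjoint ``colour-rich'' cluster pairs, and the $5/6$ bound ought to come from analysing when such triples can be packed, essentially a triangle-tiling problem in an auxiliary graph whose edges are the ``usable'' cluster pairs. The absorbing lemma I expect to be comparatively routine given Theorem~\ref{thm:rainbowK_3} and the ideas already in~\cite{HuLiYang2020, ChenLiNing2022} for finding two disjoint rainbow triangles, scaled up by the $\eps n$ slack.
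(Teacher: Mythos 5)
Your high-level decomposition (absorption set $+$ near-perfect tiling, with the latter obtained via regularity and a fractional relaxation) matches the paper's architecture exactly, and you are right that the absorbing lemma is routine (the paper proves closedness already at $\delta^c(G) \geq (1-\tfrac{1}{2(r-1)}+\eps)n$, which is weaker than all the stated thresholds, so absorption is never the bottleneck). But there is a genuine gap in how you propose to handle the coloured-regularity step, and you essentially acknowledge it when you say you ``expect the bulk of the work'' to be in reformulating the rainbow condition on cluster tuples as a ``coloured (conflict) structure'' requiring a ``transversal-like structure.'' That is not what the paper does, and the vaguer conflict-structure approach does not obviously go through: you cannot simply declare cluster pairs ``colour-rich'' or ``colour-disjoint'' and expect a clean triangle-tiling problem, because rainbowness is a three-edge coupling and there is no fixed partition of colours between cluster pairs to exploit.

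The missing idea is the conversion to digraphs \emph{before} regularity (Proposition~\ref{prop:conversion}, due to Czygrinow--Molla--Nagle): pass to an edge-coloured-critical subgraph, in which each colour class is a disjoint union of stars, and orient every edge toward its star centre. This produces a digraph $H$ with $\delta^+(H)\geq\delta^c(G)-\sqrt{n}$ such that each out-neighbourhood $N_H^+(u)$ is rainbow, and crucially a rainbow $K_r$ in $G$ corresponds (up to a sparse exceptional set controlled by the ``moreover'' part of that proposition) to a digraph on $r$ vertices with complete base graph and minimum out-degree at least $r-2$. This is an honest digraph problem, so the diregularity lemma and an ordinary reduced digraph apply off the shelf; the colour constraint has already been absorbed into the orientation. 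The tiling question then becomes: when does a digraph with $\delta^+\geq\delta t$ have a perfect \emph{fractional} $\mathcal{K}_{r,r-2}$-tiling? That is handled by Farkas' lemma (passing to a dual vertex-weighting $\omega$ with $\omega(v_0)=0$ for some $v_0$) and then exhibiting a large $K_{r-1}$-tiling in $G^{\pm}[N^+(v_0)]$: Tur\'an's theorem gives $1-\tfrac{1}{(2r-3)r}$ for $r\ge 5$, the Allen--B\"ottcher--Hladk\'y--Piguet density Corr\'adi--Hajnal theorem gives $(2+\sqrt{13/2})/5$ for $r=4$, and for $r=3$ Erd\H os--Gallai on matchings plus a finer weight-vs-indegree argument gives $5/6$. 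Your sketch has no mechanism producing these numbers; naming the specific extremal inputs (Tur\'an / ABHP / Erd\H os--Gallai) applied inside the Farkas dual is where the thresholds actually come from, and without the star-orientation reduction you would not even have a digraph to apply them to.
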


\subsection{Organisation of the paper and proof sketch of Theorem~\ref{thm:main}}
In the next section, we set up basic notation and prove Propositions~\ref{prop:K_3construction} and~\ref{prop:extremalconstruction}. 
In Section~\ref{sec:rainbowK_3}, we give an alternative proof of Theorem~\ref{thm:rainbowK_3}. 

The rest of the paper will focus on the proof of Theorem~\ref{thm:main}.
Our approach is based on the absorption technique, introduced by R\"odl, Ruci\'nski and Szemer\'edi~\cite{RodlRucinskiSzemeredi2009}.
This reduces our problem to finding an absorbing set and an almost perfect rainbow-$K_r$-tiling. 
Finding an absorbing set is relatively straightforward and is proved in Section~\ref{sec:absorption}.

In order to find an almost perfect rainbow-$K_r$-tiling, one key ingredient in our proof is to convert edge-coloured graphs to digraphs, originating from Czygrinow, Molla and Nagle~\cite{CzygrinowMollaNagle}, as seen in Section~\ref{sec:convert}.
This allows us to apply the digraph version of the Szemer\'edi regularity lemma (Lemma~\ref{lma:szemeredi}).

In Section~\ref{sec:almosttiling} we find an almost perfect rainbow-$K_r$-tiling. 
We first convert the problem into a digraph tiling problem. 
We then consider its fractional relaxation and apply Farkas' Lemma.
Theorem~\ref{thm:main} is proved in Section~\ref{sec:proofmain}.

In Section~\ref{sec:remark}, we give remarks on  perfect rainbow-$K_r$-tiling  and related problems, and in Appendix~\ref{sec:absorbingrevisit} we prove a stronger version of the absorption lemma specifically for rainbow-$K_3$-tilings.

\section{Notation and extremal constructions} \label{sec:notation}

Let $G$ be a graph. 
For a vertex subset $S \subseteq V(G)$, we denote by $G[S]$ the induced subgraph of~$G$ on~$S$. 
For $S,T \subseteq V(G)$, we denote by $G[S,T]$ the subgraph of~$G$ with vertex set~$S \cup T$
and edge set $E(G[S,T]) = \{xy \in E(G) : x \in S \text{ and } y \in T \}$.
Note that $G[S,S] = G[S]$.
For $x,y,z\in V(G)$, we write $G[xyz]$ instead of $G[\{x,y,z\}]$.
For $t \in \mathbb{N}$, the \emph{$t$-blowup of~$G$}, denoted by~$G(t)$, is the graph such that $V(G(t))$ is obtained by replacing each $v \in V (G)$ by a vertex set $U_v = \{v_1, \dots, v_t\}$ of size~$t$ and~$E(G(t) ) = \{u_iv_j : uv \in E(G) \text{ and } i,j \in [t] \}$.  For graphs $G$ and $H$, we denote by~$G - H$ the subgraph of $G$ obtained by removing the edges in~$G \cap H$, that is $V(G-H) = V(G)$ and $E(G-H) = E(G) \setminus E(H)$.

These notation are also generalised to edge-coloured graphs and digraphs. 

An \emph{edge-coloured} graph is a graph~$G$, together with an edge-colouring.
Throughout the paper, we will assume that its edge-colouring is~$c$.
Thus, for an edge $e \in E(G)$, $c(e)$ is the colour of~$e$. 
We say that $G$ is \emph{edge-coloured critical} if any deletion of edges results in a decrease in~$\delta^c(G)$. 
Note that if $G$ is \emph{edge-coloured critical}, then each colour induces vertex-disjoint stars.

Let $G$ be a digraph on~$V$. 
For $U,W \subseteq V$, we write $G[U,W]$ to be the bipartite graph with vertex classes~$U$ and~$W$ such that $uw \in E(G[U,W])$ with $ u \in U$ and $w \in W$ if and only if $uw$ is a directed edge in~$G$.
We write $e_G(U,W)$ for $|E(G[U,W])|$.
For a digraph~$G$, we define $G^{\pm}$ to be the graph on $V(G)$ such that $xy \in E(G^{\pm})$ if and only if $xy, yx \in E(G)$.

For $k \in \mathbb{N}$, we sometimes denote the set~$\{1, 2, . . . , k\}$ by~$[k]$.
For $\alpha, \beta \in (0,1]$, we often use the notation $\alpha \ll \beta$ to mean that $\alpha$ is sufficiently small as a function of~$\beta$ i.e.\ $\alpha \leq f(\beta)$ for some implicitly given non-decreasing function $f:(0,1] \rightarrow (0,1]$. 
We implicitly assume all constants in such hierarchies are positive, and if $1/k$ appears we
assume $k$ is an integer.
We omit floors and ceilings whenever this does not affect the argument.

\subsection{Lower bound constructions}

We now give the edge-coloured graphs that prove Propositions~\ref{prop:K_3construction} and~\ref{prop:extremalconstruction}.

\begin{proof}[Proof of Proposition~\ref{prop:K_3construction}]
Let $X, Y,Z$ be disjoint vertex sets of sizes $6(n-2k)/7$, $(18k-2n)/7-1$ and $3(n-2k)/7+1$, respectively.
Define $G$ to be the edge-coloured graph on $X \cup Y\cup Z$ such that 
\begin{itemize}
	\item $G[X]$ is a rainbow complete bipartite graph with each vertex class of size~$3(n-2k)/7$;
	\item $G[Y \cup Z]$ is a rainbow complete graph;
	\item for each $x \in X$ and $y \in Y$, there is an edge~$xy$ of colour~$y$ not appearing in $G[X] \cup G[Y \cup Z]$. 
\end{itemize}
Note that $\delta^c(G) = \min \{ |X|/2 + |Y|, |Y \cup Z|, |Y \cup Z| -1 \} =  (n+12k)/7-1$. 
Every rainbow $K_3$ that contains a vertex of $X$ must also contain two vertices of~$Y$. 
Consider any rainbow-$K_3$-tiling.
The number of vertices of $X$ that are not covered is at least $|X| - \lfloor |Y|/2 \rfloor = n - 3k+1$.
Thus the rainbow-$K_3$-tiling has size at most $k-1$.
\end{proof}

\begin{proof}[Proof of Proposition~\ref{prop:extremalconstruction}]
Let $X, Y,Z$ be disjoint vertex sets of sizes $(r-1)m, (r-1)^2m-1$ and $(r-2)m+1$, respectively.
Define $G$ to be the edge-coloured graph on $X \cup Y\cup Z$ such that 
\begin{itemize}
	\item $G[X]$ is a rainbow complete $(r-1)$-partite graph with each vertex class of size $m$;
	\item $G[Y \cup Z]$ is a rainbow $K_{(r^2-r-1)m}$;
	\item For each $x \in X$ and $y \in Y$, there is an edge~$xy$ of colour~$y$ not appearing in $G[X] \cup G[Y \cup Z]$. 
\end{itemize}
Note that $\delta^c(G) = \min \{ (r-2)m + |Y|, |Y \cup Z|, |Y \cup Z| -1 \} =  (r^2-r-1)m-1$. 
Every rainbow~$K_r$ that contains a vertex of $X$ must also contain $r-1$ vertices of~$Y$. 
Since $|Y| < (r-1) |X|$, $G$ does not contain a perfect rainbow-$K_r$-tiling.
\end{proof}


\section{An alternative proof of Theorem~\ref{thm:rainbowK_3}} \label{sec:rainbowK_3}

Before providing the proof, we will need the following notation that is only used in this section. 
Let $G$ be an edge-coloured graph. 
For an edge $xy \in E(G)$, let $d_{c(xy)}(x)$ be the number of edges incident to~$x$ of colour~$c(xy)$.

\begin{proof}[Proof of Theorem~\ref{thm:rainbowK_3}]
If $n =3$, then $\delta^c(G) \ge 2$ and so $G$ is a rainbow~$K_3$. If $n=4$, there are no edge-coloured graphs satisfying the assumption to consider.
Thus we may assume that $n \ge 5$. 

Suppose to the contrary that $G$ does not contain any rainbow~$K_3$.
Let $G^*$ be an edge-coloured critical spanning subgraph of~$G$ with $\delta^c(G^*) = \delta^c(G)$.
We now claim that it suffices to show that $n$ is even and $G^*$ is a properly coloured $K_{n/2,n/2}$.
Indeed, if there is an edge~$xy \in E(G - G^*)$, then $xy$ lies entirely in one of the vertex classes of~$G^*$. 
Since $n/2 \ge 3$ and $G^* = K_{n/2,n/2}$ is properly coloured, there exists a vertex~$z$ in the other vertex class such that $c(xz) \ne c(xy) \ne c(yz)$. 
Note that $c(xz) \ne c(yz)$ as $G^*$ is properly coloured and so $xyz$ is a rainbow~$K_3$.

Hence, we may assume that $G$ is edge-coloured critical but is not a properly coloured~$K_{n/2,n/2}$.
Each monochromatic subgraph of $G$ is a disjoint union of stars.
Without loss of generality, we may further assume that each monochromatic subgraph is a star.
Thus, for $xy \in E(G)$, 
\begin{align}
  d_{c(xy)}(x)  + d_{c(xy)}(y) -2 = & \max \{d_{c(xy)}(x), d_{c(xy)}(y)\} - 1 
 \le \Delta(G) - \delta^c(G). 
	\label{eqn:star}
\end{align}
Let $x \in V(G)$ with $d(x) = \Delta(G)$.
Define a digraph $H$ on $N(x)$ such that $yz \in E(H)$ if and only if 
$c(yz) \ne c(xy) \ne c(xz)$.
For $y \in V(H)$, we have
\begin{align}
	d^+(y) & \ge ( d(x) - d_{c(xy)}(x) ) + (d(y) - d_{c(xy)}(y)) - (n-2) \nonumber
	\\
	&= 
	d(x) + d(y) - n - \left( d_{c(xy)}(x) + d_{c(xy)}(y) -2 \right) \nonumber\\
	& \overset{\mathclap{\text{\eqref{eqn:star}}}}{\ge} d(x) + d(y) - n -  ( \Delta(G) - \delta^c(G))
	= d(y) + \delta^c(G)- n. \label{eqn:d+(y)}
\end{align}
Observe that if $ yz \in E(H)$ then $c(yz) = c(xz)$ or else $xyz$ is a rainbow~$K_3$ in $G$, a contradiction.
Thus, $H$ is an oriented graph.
By a simple averaging argument, there exists a vertex $y \in V(H)$ such that
\begin{align}
	d^-(y) \ge d^+(y) \ge d(y) + \delta^c(G)- n. \label{eqn:k3}
\end{align}
Recall that, for all $z \in N^-(y)$, $c(zy) = c(xy)$ and so $d^-(y) \le d(y) - \delta^c(G)$.
Therefore, by~\eqref{eqn:k3}, we have $\delta^c(G) \le n/2$ and so $\delta^c(G) = n/2$ by our assumption, so that $n$ is even. For all $y \in V(H)$, equality must hold for~\eqref{eqn:star},~\eqref{eqn:d+(y)} and~\eqref{eqn:k3} .

If $\Delta(G) = n/2$, then $G$ is $n/2$-regular.
By~\eqref{eqn:d+(y)}, we deduce that $G[N(x)]$ is empty and so $G$ is a properly coloured~$K_{n/2,n/2}$, a contradiction.

Thus, we may assume that $\Delta(G) > n/2$. 
Since equality holds for~\eqref{eqn:star}, (by relabeling the colours if necessary) we may assume that $N(x) = \{y_1,\dots y_{n/2-1}\} \cup Y'$ such that $c(xy_i) = i$ for $i \in [n/2-1]$ and $c(xy') =0$ for all $ y' \in Y'$ and $|Y'| \ge 2$. 
Consider $y' \in Y'$. 
Recall that each monochromatic subgraph of $G$ is a star and if $z \in N^-(y')$, then $c(zy') = c(xy')$.
Thus we have $d^-(y') = 0$ and so since equality holds for~\eqref{eqn:k3}, we have
\begin{align}
d(y') = n/2 \text{ for all $y' \in Y'$}. \label{eqn:y'}
\end{align}
Again by~\eqref{eqn:star} for $x y_1$, we deduce that $d_{1}(y_1)  = \Delta(G) - \delta^c(G) +1$ and so $	d(y_1) = \Delta(G)$.

We now repeat the whole proof with $y_1$ playing the role of~$x$. 
We deduce that $x$ is in the corresponding~$Y'$ for~$y_1$. 
By~\eqref{eqn:y'}, we have $d(x) = n/2 < \Delta(G) = d(x)$, a contradiction. 
\end{proof}

Given a digraph~$G$, we can consider an edge-coloured graph~$H$ of the base graph of~$G$ such that for each $xy \in E(G - G^{\pm})$, $c(xy) = y$ and the subgraph $G^{\pm}$ of $H$ is rainbow coloured with a new set of colours.
Note that $\delta^c(H) = \delta^+(G)$. 
We get the following corollary from Theorem~\ref{thm:rainbowK_3}.

\begin{corollary} \label{cor:directedK_3}
Let $G$ be a digraph on $n$ vertices with $\delta^+(G) > n/2$. 
Then there exists a $K_3$ with vertex set~$T$ in the base graph of~$G$ such that $\delta^+(G[T]) \ge 1$.
\end{corollary}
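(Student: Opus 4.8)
The plan is to transfer the problem to the edge-coloured graph $H$ associated with $G$ in the paragraph preceding the statement, apply Theorem~\ref{thm:rainbowK_3} to find a rainbow $K_3$ in $H$, and then observe that any rainbow $K_3$ of $H$ must already have the desired out-degree property in $G$.

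First I would construct $H$ on the base graph of $G$ exactly as described: colour each $xy \in E(G - G^{\pm})$ (a one-way arc $x \to y$) with $c(xy) = y$, and rainbow-colour the edges of $G^{\pm}$ with a fresh set of colours. As recorded in the excerpt, $\delta^c(H) = \delta^+(G)$, so our hypothesis gives $\delta^c(H) > n/2$; note also that $\delta^+(G) > n/2$ forces $n \ge 3$. By Theorem~\ref{thm:rainbowK_3}, $H$ contains a rainbow $K_3$ unless $H$ is one of $K_{n/2,n/2}$, $K_4$, or $K_4 - e$. These are quick to rule out or handle: a balanced complete bipartite graph with minimum colour degree at least $n/2$ is properly coloured and hence has $\delta^c$ equal to $n/2$, while $K_4 - e$ has a vertex of degree $2$; both contradict $\delta^c(H) > n/2$. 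If $H = K_4$, then $\delta^c(H) = 3$ forces the colouring to be proper, and every triangle in a properly coloured graph is rainbow (its three edges are pairwise adjacent), so $H$ still contains a rainbow $K_3$. In all cases we obtain a rainbow $K_3$ of $H$ on some vertex set $T = \{a,b,c\}$, which is in particular a triangle of the base graph of $G$.

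The substantive step is to verify $\delta^+(G[T]) \ge 1$ for such a $T$. Suppose instead that some vertex of $T$, say $a$, has no out-neighbour inside $\{b,c\}$. Consider the edge of $H$ joining $a$ and $b$: it corresponds to at least one arc between $a$ and $b$ in $G$. If $ab \in G^{\pm}$ then $a \to b$ is an arc, contradicting our assumption; hence $ab \notin G^{\pm}$, and since $a$ has no out-arc to $b$ the unique arc present is $b \to a$, so this edge of $H$ has colour $a$. The same reasoning applied to $\{a,c\}$ shows the edge of $H$ joining $a$ and $c$ also has colour $a$. But then $T$ has two edges of colour $a$, contradicting that $T$ is rainbow in $H$. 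Therefore $\delta^+(G[T]) \ge 1$, which is what we wanted.

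I expect the only idea worth isolating is the colour clash in the last paragraph: under the head-colouring, a vertex that is a local sink of $T$ in $G$ would see two equally-coloured edges in $H$, so rainbowness of $T$ rules this out. Everything else is routine; the mild bookkeeping over the exceptional graphs $K_{n/2,n/2}$, $K_4$, $K_4 - e$ is the only place that needs a moment's care, and the strict inequality $\delta^c(H) > n/2$ makes even that essentially automatic.
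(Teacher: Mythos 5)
Your proof is correct and follows exactly the route the paper intends: the paper states Corollary~\ref{cor:directedK_3} without explicit proof, relying on the digraph-to-edge-coloured-graph construction in the preceding paragraph together with Theorem~\ref{thm:rainbowK_3}, and you carry that out, including the (correct) observation that a local sink of $T$ in $G$ would force two edges of $H[T]$ to receive the head-vertex colour and thus break rainbowness. Your handling of the exceptional graphs $K_{n/2,n/2}$, $K_4$, $K_4-e$ is also right, though a small note: the paper's claim $\delta^c(H)=\delta^+(G)$ is really only an inequality $\delta^c(H)\ge\delta^+(G)$ in general (a vertex with an in-only neighbour picks up one extra colour), but this is harmless since it only strengthens the hypothesis you need.
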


\section{Absorption lemma} \label{sec:absorption}

In this section, we consider an absorption lemma. We need the following notation from Markstr\"om and the first author~\cite{LoMarkstrom2015}.

Let $r \in \mathbb{N}$ and $G$ be an edge-coloured graph on $n$ vertices. 
Given $s \in \mathbb{N}$ and vertices $x, y\in V(G)$, we say that the vertex set $S\subseteq V(G)$ is an \emph{$(x,y;K_r)$-connector of length~$s$} if $S\cap \{x, y\}=\emptyset$, $|S|=rs-1$ and both $G[S\cup \{x\}]$ and $G[S\cup \{y\}]$ contain perfect rainbow-$K_r$-tilings.
Given an integer $s\ge 1$ and a constant $\eta>0$, two vertices $x,y \in V(G)$ are \emph{$(s,\eta)$-close} to each other if there exist at least $\eta n^{rs-1}$ $(x, y;K_r)$-connectors of length~$s$ in~$G$.
A subset $U\subseteq V(G)$ is said to be \emph{$(s,\eta;K_r)$-closed in~$G$} if every two vertices in $U$ are $(s, \eta;K_r)$-close to each other.
If $V(G)$ is $(s,\eta;K_r)$-closed in~$G$ then we simply say that \emph{$G$ is $(s,\eta;K_r)$-closed}.

\begin{lemma}[{Lo and Markstr\"om~\cite[Lemma~1.1]{LoMarkstrom2015}}] \label{lemma:LoMarkstrom}
Let $1/n \ll \phi \ll \eta, 1/s,1/r$.
Let $G$ be an edge-coloured graph on $n$ vertices that is $(s,\eta;K_r)$-closed.
Then there exists an absorbing set $W\subseteq V(G)$ of order at most $\eta n$ so that for every $U \subseteq V(G) \setminus W$ with $|U| \le \phi n$ and $|U|\in r\mathbb{N}$, both $G[W]$ and $G[U\cup W]$ contain perfect rainbow-$K_r$-tilings.
\end{lemma}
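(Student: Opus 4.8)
The plan is to follow the standard absorption framework of Lo and Markstr\"om, building the absorbing set $W$ from randomly chosen "absorbers" for each possible deficiency set. For a set $T \subseteq V(G)$ with $|T| = r$, call a set $A$ of size $r(r-1)s$ a \emph{$T$-absorber} if both $G[A]$ and $G[A \cup T]$ contain perfect rainbow-$K_r$-tilings. The first key step is to show that every $r$-set $T$ has many $T$-absorbers, on the order of $\Theta(n^{r(r-1)s})$. This is where the hypothesis that $G$ is $(s,\eta;K_r)$-closed is used: one fixes a small auxiliary rainbow-$K_r$-tiling structure and, for each pair of vertices that needs to be "swapped in or out", invokes the $(s,\eta)$-closeness to find a connector; multiplying the counts over the $r$ vertices of $T$ (while greedily keeping the connectors vertex-disjoint, which only costs a constant factor loss since $s,r$ are fixed) yields the claimed polynomial number of absorbers.

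The second step is a probabilistic selection. Let $\mathcal{F}$ be a random subfamily of all absorbers obtained by including each absorber independently with probability $p = \Theta(\eta/n^{r(r-1)s - 1})$ (chosen so the expected size is $\ll \eta n$). By linearity of expectation and Markov's inequality, with positive probability $\mathcal{F}$ has at most, say, $\eta n / (2r(r-1)s)$ members, while for every $r$-set $T$ the expected number of $T$-absorbers in $\mathcal{F}$ is $\Theta(\eta \phi n) \gg \phi n$ (here one should take $p$ large enough relative to $\phi$; since $\phi \ll \eta$ this is consistent). A second moment or simple Chernoff bound, together with a union bound over the $\binom{n}{r}$ choices of $T$, shows that with positive probability every $T$ simultaneously has at least (say) $\phi n$ absorbers in $\mathcal{F}$. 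Finally, deleting one vertex from each pair of intersecting absorbers in $\mathcal{F}$ (again only a constant-factor loss, using that any fixed vertex lies in few absorbers in expectation—this needs a separate easy first-moment bound on intersecting pairs) yields a family $\mathcal{F}'$ of pairwise disjoint absorbers that still hits every $T$ at least $\phi n$ times. Set $W := \bigcup_{A \in \mathcal{F}'} A$; then $|W| \le \eta n$, and $G[W]$ has a perfect rainbow-$K_r$-tiling since each absorber does.

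The third step verifies the absorption property. Given $U \subseteq V(G) \setminus W$ with $|U| \le \phi n$ and $r \mid |U|$, partition $U$ into $|U|/r$ blocks of size $r$; process the blocks one at a time, each time assigning a fresh unused $T$-absorber $A \in \mathcal{F}'$ for the current block $T$, which is possible because every $T$ has at least $\phi n \ge |U|/r$ absorbers available and distinct blocks use distinct absorbers. For each used absorber $A$ with block $T$, replace the perfect rainbow-$K_r$-tiling of $G[A]$ by the perfect rainbow-$K_r$-tiling of $G[A \cup T]$; for each unused absorber keep its own tiling; together these cover $U \cup W$, giving a perfect rainbow-$K_r$-tiling of $G[U \cup W]$.

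I expect the main obstacle to be the first step: establishing that every $r$-set $T$ has $\Omega(n^{r(r-1)s})$ absorbers purely from the closedness hypothesis. One must carefully design the absorber as a chain of connectors linking the vertices of $T$ to an auxiliary absorbed gadget, maintain vertex-disjointness while counting, and handle the rainbow (proper-colouring) constraint — the copies of $K_r$ inside each connector must individually be rainbow, though colours may repeat across copies, so the connector definition already bakes this in and no extra work is needed there. The probabilistic and verification steps are routine given the excerpt's hierarchy $1/n \ll \phi \ll \eta, 1/s, 1/r$.
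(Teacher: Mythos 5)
The paper does not prove this lemma; it cites Lo and Markstr\"om~\cite{LoMarkstrom2015} and remarks that the adaptation to edge colours is routine. Your proposal is exactly the standard Lo--Markstr\"om absorbing argument, and the adaptation is indeed correct, essentially because the paper's definitions of connector and $(s,\eta;K_r)$-closedness are already phrased in terms of \emph{rainbow} $K_r$-tilings, so the connectors you invoke automatically supply rainbow copies.

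Two arithmetic slips, neither structural but worth fixing. With the paper's convention that an $(x,y;K_r)$-connector of length $s$ has $rs-1$ vertices, the natural $T$-absorber for $T=\{t_1,\dots,t_r\}$ consists of an auxiliary rainbow $K_r$ on $\{v_1,\dots,v_r\}$ together with pairwise disjoint $(t_i,v_i;K_r)$-connectors $S_1,\dots,S_r$; its size is $r + r(rs-1) = r^2 s$, not $r(r-1)s$. Likewise, with $p=\Theta(\eta/n^{|A|-1})$ the expected number of $T$-absorbers captured by the random family is $\Theta(\eta n)$, not $\Theta(\eta\phi n)$; the conclusion $\gg\phi n$ still follows since $\phi\ll\eta$.

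One point you should make explicit in the counting step: fixing a single auxiliary rainbow $K_r$ and varying the $r$ connectors only yields $\Omega(\eta^r n^{r(rs-1)}) = \Omega(n^{|A|-r})$ $T$-absorbers, which is a factor $n^r$ too few for the union bound over the $\binom{n}{r}$ choices of $T$. You must also vary the auxiliary $K_r$ over the $\Omega(\eta n^r)$ rainbow copies that the closedness hypothesis provides (each connector of length $s$ contains $s$ rainbow $K_r$'s, and a short double-counting argument gives $\Omega(\eta n^r)$ distinct ones). This restores the needed $\Omega(n^{|A|})$ bound.
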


Although, the original statement is for non-edge-coloured graphs, the argument can be easily adapted to the edge-coloured setting.

\begin{proposition} \label{prop:1-closed}
Let $1/n \ll \eta \ll \eps,1/r$.
Let $G$ be an edge-coloured graph on $n$ vertices with $\delta^c(G)\ge (1-\frac{1}{2(r-1)} + \eps) n$.
Then $G$ is $(1, \eta;K_r)$-closed.
\end{proposition}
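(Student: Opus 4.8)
The plan is to show that any two vertices $x, y \in V(G)$ are $(1,\eta;K_r)$-close, i.e.\ that there are $\Omega(n^{r-1})$ sets $S$ of size $r-1$ avoiding $\{x,y\}$ such that both $G[S \cup \{x\}]$ and $G[S \cup \{y\}]$ contain a perfect rainbow-$K_r$-tiling — which, since $|S \cup \{x\}| = r$, just means both $G[S\cup\{x\}]$ and $G[S\cup\{y\}]$ are rainbow copies of $K_r$. So I need to greedily build a common rainbow $K_r$-neighbourhood for $x$ and $y$. First I would fix $x$ and $y$ and pick the vertices of $S = \{s_1, \dots, s_{r-1}\}$ one at a time. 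At each step, after having chosen $s_1, \dots, s_{i-1}$, I want to pick $s_i$ so that (a) $s_i$ is adjacent to each of $x, y, s_1, \dots, s_{i-1}$, and (b) the colours on all the relevant edges (the $2(i)$ edges from $s_i$ to the previous vertices plus $x,y$, together with the already-fixed edges among $\{x,y,s_1,\dots,s_{i-1}\}$ within each of the two target cliques) are all distinct within each clique. The key point is that the colour degree condition lets me find, at each vertex already selected, many edges of "fresh" colours.

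The key counting step is the following. Suppose I have already chosen $j := i-1$ vertices of $S$, plus $x$ and $y$. I want a candidate set of size $\Omega(n)$ for $s_i$. The natural way: for the vertex $x$, consider its colour degree $d^c(x) \ge (1 - \tfrac{1}{2(r-1)} + \eps)n$; at most $2(j+1) \le 2(r-1)$ colours are "used up" by edges already placed in the two cliques and at most $j+2 < r$ previously chosen vertices (including $x,y$) are forbidden as choices of $s_i$. Pick a set of at least $d^c(x) - 2(r-1)$ colours not yet used, and one edge $x u$ of each such colour; this gives a set $A_x$ of at least $d^c(x) - O(r)$ candidate vertices $u$ each joined to $x$ by a "good" edge. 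Do the same at $y$ to get $A_y$, and similarly I need the new vertex to have a good edge to each of $s_1, \dots, s_j$; each such requirement defines a set $A_{s_\ell}$ of size at least $d^c(s_\ell) - O(r)$. Since $G$ has $n$ vertices and each of the $j+2 \le r$ sets $A_x, A_y, A_{s_1}, \dots, A_{s_j}$ has size at least $(1 - \tfrac{1}{2(r-1)} + \eps)n - O(r)$, their intersection has size at least $n - (j+2)\bigl(\tfrac{1}{2(r-1)} - \eps\bigr)n - O(r) \ge n - r \cdot \tfrac{1}{2(r-1)} n + \eps r n - O(r) \ge \bigl(\tfrac12 + \Omega(\eps)\bigr)n$, so in particular there are $\Omega(n)$ choices for $s_i$ (here I would be slightly careful that the colour-degree bound at $x$ and at $y$ each need only be debited once even though $x$ sits in one clique and $y$ in the other, so the counting is clean).

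Putting this together: there are at least $\prod_{i=1}^{r-1} \Omega(n) = \Omega(n^{r-1})$ ways to choose the ordered tuple $(s_1, \dots, s_{r-1})$, and hence (dividing by $(r-1)!$) at least $\Omega(n^{r-1})$ connectors $S$, each producing the required two rainbow copies of $K_r$. Since $\eta \ll \eps, 1/r$, choosing the hidden constant correctly gives at least $\eta n^{r-1}$ such connectors, so $x$ and $y$ are $(1,\eta;K_r)$-close. As $x,y$ were arbitrary, $G$ is $(1,\eta;K_r)$-closed.

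The main obstacle is bookkeeping the colours: I must ensure that the colours used inside $G[S \cup \{x\}]$ form a rainbow $K_r$ and \emph{independently} that the colours inside $G[S \cup \{y\}]$ form a rainbow $K_r$ — these are two separate rainbow conditions sharing the edges among $S$. The cleanest way to handle this is, when selecting $s_i$, to forbid for the $x$-edge any colour already appearing in $G[\{x,s_1,\dots,s_{i-1}\}]$ and for the $y$-edge any colour already appearing in $G[\{y,s_1,\dots,s_{i-1}\}]$, and to forbid for the edges $s_i s_\ell$ any colour appearing in either clique so far; the total number of forbidden colours per vertex is still $O(r^2)$, which is absorbed by the $\eps n$ slack as long as $n$ is large, so the intersection bound above survives with $O(r^2)$ in place of $O(r)$. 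Everything else is a routine greedy argument.
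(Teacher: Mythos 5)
Your proof takes a genuinely different route from the paper: you run a direct greedy, picking $s_1,\dots,s_{r-1}$ one at a time from intersections of ``fresh-colour'' candidate sets at each previously fixed vertex, whereas the paper first selects a common colour-neighbourhood $Z\subseteq N(x)\cap N(y)$ with distinct $x$- and $y$-colours, builds an auxiliary digraph $H$ on $Z$, applies the supersaturated Erd\H{o}s--Stone theorem to $H^\pm$ to produce $\Omega(n^{(r-1)t})$ blowups $K_{r-1}(t)$, and then greedily extracts one vertex per part inside each blowup.

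There is, however, a genuine gap in your counting step. When you choose $s_i$, the intersection $\bigcap_w A_w$ (over $w\in\{x,y,s_1,\dots,s_{i-1}\}$) guarantees that each single new edge $s_iw$ carries a colour avoiding the colours already present, but it does \emph{not} prevent two of the new edges, say $s_is_\ell$ and $s_is_{\ell'}$ with $\ell\ne\ell'$, from sharing a colour with each other. That pairwise clash is not a ``colour already appearing in either clique so far'', so it is not excluded by forbidding $O(r^2)$ colours per vertex. And it cannot be bounded that way: for a fixed pair $(s_\ell,s_{\ell'})$, the set $\{u : c(us_\ell)=c(us_{\ell'})\}$ can have size $\Omega(n)$ even under the colour-degree hypothesis. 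Concretely, the colouring can contain linearly many colours $\gamma$ each inducing a star centred at some $u$ with both $s_\ell$ and $s_{\ell'}$ among its leaves; $\delta^c(G)\ge(1-\tfrac{1}{2(r-1)}+\eps)n$ constrains how many monochromatic edges meet $s_\ell$, but says nothing that bounds how often $s_\ell$'s colour class at a candidate $u$ coincides with $s_{\ell'}$'s. So the $\Omega(n)$ candidate intersection could consist entirely of clashing vertices, and the greedy step can die. The paper's detour through $H^\pm$ and Erd\H{o}s--Stone is precisely what supplies the missing structural control: the final greedy happens inside a fixed $K_{r-1}(t)$, where the double-edge condition defining $H^\pm$ (and the proper-colouring assertion about the blowup joined to $x$ and to $y$) lets each (vertex, colour) pair exclude only $O(1)$ candidates in the relevant part. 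A naive $r$-fold intersection of fresh-colour neighbourhoods, as in your plan, does not give you this control, so as written the proof does not go through.
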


\begin{proof}
Let $\eta \ll \eta' \ll \eps,1/r$. 
Let $x,y \in V(G)$ be distinct. 
Note that 
\begin{align*}
	d^c(x)+d^c(y) - n \ge (r-2) n/(r-1). 
\end{align*}
Let $Z$ be a subset of~$N(x) \cap N(y)$ of size $(r-2) n/(r-1)$ such that, for distinct $z,z' \in Z$, we have $c(xz) \ne c(xz')$ and $c(yz) \ne c(yz')$.
Define an auxiliary digraph~$H$ on~$Z$ such that we have $zz' \in E(H)$ if and only if $zz' \in E(G[Z])$, $c(xz) \ne c(zz')$ and $c(yz) \ne c(zz')$.
Note that $\delta^+(H) \ge  (1-\frac{3}{2(r-1)} + \eps/2) n$ and so 
\begin{align*}
	e(H^{\pm}) \ge \left(1-\frac{3}{2(r-1)} + \eps/2 \right) n |Z| - |Z|^2/2 = 
	\frac{1}{2}\left(\frac{r-3}{r-2} + \frac{(r-1)\eps}{r-2} \right)|Z|^2.
\end{align*}
Let $t = r^3$. 
By (the supersaturate version of) the Erd\H{o}s--Stone--Simonovitz theorem, $H^{\pm}$ contains $\eta' n^{(r-1)t}$ many $K_{r-1}(t)$. 
Consider one $K_{r-1}(t)$ in~$H^{\pm}$ with vertex classes $V_1,\dots, V_{r-1}$. 
Let $K_x$ be the complete $r$-partite graph in $G$ obtained from $K_{r-1}(t)$ by joining $x$ to all vertices, and define~$K_y$ similarly. 
Note that both $K_x$ and~$K_y$ are properly edge-coloured in~$G$. 
We can now greedily pick $v_i \in V_{i}$ for $i \in [r-1]$ in turn such that $xv_1 \dots v_i$ and $yv_1 \dots v_i$ are both rainbow $K_{i+1}$ in~$G$. 
Indeed, since given $v_1 \dots v_i$ with $i \le r-2$, each of $xv_1 \dots v_i$ and $yv_1 \dots v_i$ in~$G$ contain at most $\binom{i}2+(i-1) = (i+2)(i-1)/2 \le r(r-3)/2$ colours. 
Since $t > (i+2)r(r-3)/2$, there exists a vertex $v_{i+1} \in V_{i+1}$ such that $v_{i+1}$ is joined to $\{x,y, v_1, \dots, v_i\}$ with new colours. 
Hence both $xv_1\dots v_{r-1}$ and $yv_1\dots v_{r-1}$ form rainbow~$K_r$ in~$G$. 
Since $\eta \ll \eta'$, we deduce that there are at least $\eta n^{r-1}$ such choices for $v_1\dots v_{r-1}$ and so $G$ is $(1, \eta; K_r)$-closed. 
\end{proof}


\section{Edge-coloured graphs to digraphs} \label{sec:convert}

The following proposition allows us to convert edge-coloured graphs to digraphs, which was introduced by Czygrinow, Molla and Nagle~\cite{CzygrinowMollaNagle}.

\begin{proposition} \label{prop:conversion}
Let $G$ be an edge-coloured graph on $n$ vertices with $\delta^c(G)  \ge \delta n$. 
Then there exists a digraph~$H$ on~$V(G)$ such that $\delta^+(H) \ge \delta^c(G) - \sqrt{n}$, the base graph of $H$ is a subgraph of~$G$, for all $u \in V(H)$, $G[u,N^+_H(u)]$ is rainbow and, for all $uv \in E(H)$, there are at most $\sqrt{n}$ vertices $w \in N^-_H(u)$ with $c(uv) = c(uw)$. 
Moreover, given $v \in V(G)$, the number of pairs $(x,y) \in V(G) \times V(G)$ such that  $xv,vy \in E(H)$ and $c(xv) = c(vy)$ is at most~$n$. 
\end{proposition}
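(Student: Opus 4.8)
The plan is to build the digraph $H$ from $G$ by a two-step process: first orient a well-chosen subgraph of $G$ to get large out-degree, then prune problematic in-neighbours. For the first step, for each vertex $u$ I would pick one edge of each colour incident to $u$, call the resulting set $E_u$; this is a set of $d^c(u) \ge \delta^c(G)$ edges at $u$, with all distinct colours, so $G[u, V_u]$ is rainbow where $V_u$ is the set of other endpoints. The naive thing is to orient every edge of $E_u$ away from $u$, but an edge $uv$ might be selected by both $u$ and $v$, forcing it both ways — that's fine since $H$ is allowed to be a digraph (not an oriented graph). So set $E(H) = \bigcup_u \{\,\overrightarrow{uv} : uv \in E_u\,\}$. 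Then $d^+_H(u) \ge |E_u| = d^c(u) \ge \delta^c(G)$ and the base graph of $H$ is a subgraph of $G$, and $G[u, N^+_H(u)]$ is rainbow by construction.

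The second step handles the two ``at most $\sqrt n$'' / ``at most $n$'' conditions. For a fixed $u$ and a fixed colour $\alpha$, the in-neighbours $w \in N^-_H(u)$ with $c(uw) = \alpha$ are precisely the $w$ that selected the edge $uw$ into $E_w$ with $c(uw) = \alpha$; since each such $w$ selects at most one $\alpha$-coloured edge, and all these edges share the endpoint $u$... this does not immediately bound the count, so instead I would delete, for each $u$ and each colour $\alpha$ appearing at $u$, all but $\sqrt n$ of the out-edges $\overrightarrow{vu}$... wait — more carefully: the condition is about $uv \in E(H)$ and $w \in N^-_H(u)$ with $c(uv) = c(uw)$, i.e.\ for each colour we must cap the number of \emph{in}-edges at $u$ of that colour. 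So for each vertex $u$ and each colour $\alpha$, if more than $\sqrt n$ edges of colour $\alpha$ point into $u$ in $H$, I delete all but $\sqrt n$ of them. This removes at most $\sqrt n$ edges per (vertex, colour) pair \emph{that had surplus}; the key point is that each \emph{deleted} in-edge $\overrightarrow{vu}$ was selected by $v$, and for a fixed colour $\alpha$ the edges of colour $\alpha$ form a union of stars (or at least we can bound things), so the total number of out-edges deleted at any single vertex $v$ is small — this is where I'd want a counting argument showing the per-vertex out-degree loss is at most $\sqrt n$, so that $\delta^+(H) \ge \delta^c(G) - \sqrt n$ survives. A clean way: the edges of each colour incident to a given vertex $v$ after step 1 number at most... hmm, $v$ has exactly one out-edge of each colour, so $v$ loses at most one out-edge per colour to the pruning; but $v$ can lose at most $\sqrt n$ total only if the number of ``bad'' colours at $v$ is bounded. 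Instead I would prune globally: for each ordered pair (vertex $u$, colour $\alpha$) with excess, delete the $\le \sqrt n$-and-up extra in-edges, and bound the out-degree loss at $v$ by noting an in-edge $\overrightarrow{vu}$ deleted on account of $(u,\alpha)$ has $c(vu)=\alpha$, and $v$ has at most one out-edge of colour $\alpha$, hence is charged at most once per colour, and the colours for which $v$'s unique $\alpha$-out-edge gets deleted number at most $\sqrt n$ because... this needs the averaging: the total number of deleted edges is $\le \sum_{u}\sum_\alpha (\deg^-_\alpha(u) - \sqrt n)^+ \le \sum_u \deg^-_H(u) \cdot (\text{something})$, and then average over $v$. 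Let me just say: a short double-counting shows the number of out-edges removed at each vertex is $O(\sqrt n)$; adjusting the threshold constants (replacing $\sqrt n$ by, say, $\sqrt n / 2$ in the pruning and absorbing constants) gives the stated bound.

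For the final ``moreover'' clause, fix $v$ and count pairs $(x,y)$ with $\overrightarrow{xv}, \overrightarrow{vy} \in E(H)$ and $c(xv) = c(vy)$. The edge $\overrightarrow{vy} \in E(H)$ means $vy \in E_v$, so the colours $c(vy)$ are all distinct as $y$ ranges over $N^+_H(v)$ — each colour is hit by at most one $y$. Given such a $y$ with colour $\beta = c(vy)$, the number of valid $x$ is the number of in-neighbours $x$ of $v$ with $c(xv) = \beta$, which after pruning is at most $\sqrt n$. Summing over the at most $n$ choices of $y$ gives at most $n \sqrt n$ — too weak. But actually the distinct-colour property cuts the other way too: we should instead count over $x$. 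Hmm; the right bound $n$ must come from: for each colour $\beta$, at most one $y$ has $c(vy) = \beta$ (rainbow out-edges), and the in-edges $\overrightarrow{xv}$ of colour $\beta$ — if after pruning we instead arrange at most \emph{one} such per colour (prune to threshold $1$ when it's cheap, or observe the monochromatic-star structure of an edge-coloured-critical $G$ via Section~\ref{sec:notation}), then the count is at most the number of colours $\le n$. I expect the cleanest route is to first pass to an edge-coloured-critical subgraph so each colour class is a union of vertex-disjoint stars, then the in-edges of colour $\beta$ at $v$ either all come from the star centred at $v$ (at most... well the star at $v$ in colour $\beta$ can be large) or from one star centred elsewhere (then at most one in-edge at $v$).

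The main obstacle I anticipate is precisely this bookkeeping in the pruning step: showing that capping in-degrees by colour costs each vertex only $\sqrt n$ (not more) in out-degree, and simultaneously getting the ``moreover'' bound of exactly $n$ rather than $n^{3/2}$. Both hinge on exploiting that for a fixed vertex the out-edges of $H$ carry distinct colours, combined with a star/critical structural reduction on the colour classes of $G$; the degree arithmetic ($\delta^c(G) - \sqrt n$ vs.\ what one literally gets) will need the standard trick of running the whole construction with slightly stronger thresholds and absorbing the loss into the $\sqrt n$ error term and later into $\eps n$.
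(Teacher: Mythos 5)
Your first step is essentially the same as the paper's opening move (and actually the paper makes a cleaner choice here: it first passes to an edge-coloured critical subgraph, so each colour class is a union of vertex-disjoint stars, and orients every edge towards the star centre; your ``pick one edge of each colour at each $u$'' is a choice-heavy variant of this). The genuine gap is in your pruning step, and it is located exactly where you suspect. Deleting \emph{in-edges} at overloaded pairs $(u,\alpha)$ removes \emph{out-edges} at other vertices $w$, and there is no bound on how many colours at a given $w$ get struck: $w$ could be the leaf of $\sqrt n$-sized monochromatic stars in many distinct colours simultaneously (one star per colour, all different centres), so in the worst case $w$ could lose essentially all of its out-edges. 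Your proposed charging argument (``$w$ is charged at most once per colour, hence at most $\sqrt n$ times'') only gives the first half; the second half is false without extra structure, and averaging only controls the total loss, not the per-vertex loss, which is what $\delta^+(H)\ge\delta^c(G)-\sqrt n$ needs.

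The fix is to prune on the other side: whenever $u$ has more than $\sqrt n$ in-edges of some colour $\alpha$, delete $u$'s \emph{own} $\alpha$-out-edge (if it exists) rather than touching the in-edges. Then the condition ``for all $uv\in E(H)$, at most $\sqrt n$ vertices $w\in N^-_H(u)$ have $c(uw)=c(uv)$'' holds vacuously or trivially, and the loss is now accounted at $u$ itself: since $\sum_\alpha d^-_\alpha(u)=d^-_H(u)<n$, fewer than $\sqrt n$ colours can have $d^-_\alpha(u)>\sqrt n$, so $u$ loses fewer than $\sqrt n$ out-edges, giving $\delta^+(H)\ge\delta^c(G)-\sqrt n$ directly. (The rainbow property of $G[u,N^+_H(u)]$ is preserved since you only delete out-edges.) This is the same budget as the paper's ``a vertex can be the centre of at most $\sqrt n$ stars each of size at least $\sqrt n$,'' but your version avoids the critical-subgraph reduction entirely, which is a mild simplification.

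Your ``moreover'' discussion is overcomplicated and wanders toward an $n^{3/2}$ bound before self-correcting; the intended argument is one line and does not need any colour-class structure or pruning to threshold $1$. Once $G[v,N^+_H(v)]$ is rainbow, for each $x\in N^-_H(v)$ the colour $c(xv)$ determines at most one out-neighbour $y$ of $v$ with $c(vy)=c(xv)$, so the number of pairs $(x,y)$ is at most $|N^-_H(v)|\le n-1<n$. This is exactly what the paper writes (``$y$ is uniquely determined by $x$''). So with the in-edge$\to$out-edge pruning swap and the streamlined moreover argument, your route is correct and genuinely a little different from the paper's star-orientation proof; the paper's version buys a canonical (choice-free) orientation and unifies the degree and colour-multiplicity bounds in one structural picture, while yours avoids invoking edge-coloured criticality.
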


\begin{proof}
Let $H$ be an edge-coloured critical subgraph of~$G$ with $\delta^c(H) = \delta^c(G)$. 
Each colour induces vertex-disjoint stars in~$H$ and we orient all edges toward the centre of each star. 
If a star has size at most $\sqrt{n}$, then we arbitrarily pick one of its leaves and add an arc from its centre to this leaf. 
Note that each vertex can be a centre of at most $\sqrt{n}$ stars that each have at least $\sqrt{n}$ leaves. 
Thus $\delta^+(H) \ge \delta^c(G) - \sqrt{n}$. 
By our construction, we have for all $u \in V(H)$, $G[u,N^+_H(u)]$ is rainbow and 
 for all $uv \in E(H)$, there are at most $\sqrt{n}$ vertices $w \in N^-_H(u)$ with $c(uv) = c(uw)$. 

To see the moreover statement, observe that given a directed path~$xvy$ in~$H$ with $c(xv) = c(vy)$, $y$ is uniquely determined by~$x$. 
\end{proof}

Note that a rainbow~$K_r$ in~$G$ translates to a digraph~$K$ on $r$ vertices with $\delta^+(K) \ge r-2$ whose base graph is complete.
However the converse false. 
We now show a weaker result that most of the blow-ups of~$K$ are properly coloured in~$G$. 
Then one can find a rainbow~$K_r$ using a result from Axenovich, Jiang, Tuza~\cite{AxenovichJiangTuza2003} and independently by Keevash, Mubayi, Sudakov and Verstra\"ete~\cite{KeevashMubayiSudakovVerstraete2007}.

\begin{corollary} \label{cor:conversionK_t}
Let $t,r \in \mathbb{N}$ and $r\ge 3$. Let $G$ and $H$ be as defined in Proposition~\ref{prop:conversion}.
Let $K$ be a digraph on $r$ vertices such that the base graph of $K$ is complete and $\delta^+(K) \ge r-2$. 
Then there are at most $rtn^{rt-1}$ many copies of $K(t)$ in~$H$ such that $K(t)$ is not properly coloured in~$G$. 
\end{corollary}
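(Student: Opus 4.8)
The plan is to count, for a fixed digraph $K$ as in the statement, how many copies of $K(t)$ in $H$ fail to be properly coloured in $G$, and show this is $o(n^{rt})$ — in fact at most $rtn^{rt-1}$. A copy of $K(t)$ in $H$ is specified by an ordered choice of $rt$ distinct vertices of $V(G)$ (first the $t$ vertices of each blown-up class, in the fixed cyclic order prescribed by $K$), and for it to lie in $H$ we need every arc of $K(t)$ to be present in $H$. The key point is that $K(t)$ is properly coloured in $G$ precisely when no two adjacent arcs of $K(t)$ — i.e. two arcs sharing a vertex — receive the same colour $c$ in $G$. So I would bound the number of "bad" copies by summing, over the possible shapes of a monochromatic adjacent pair of arcs, the number of copies of $K(t)$ in $H$ containing such a pair.

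The heart of the argument is the following dichotomy for two arcs of $H$ sharing a vertex $v$ and having the same $G$-colour. If the two arcs are $xv$ and $wv$ (both pointing into $v$), then by the first part of Proposition~\ref{prop:conversion} (applied with the arc $xv \in E(H)$: at most $\sqrt n$ vertices $w \in N^-_H(v)$ satisfy $c(xv)=c(wv)$) — more precisely, since $G[v, N^+_H(v)]$ is rainbow, the only way two same-coloured arcs at $v$ can occur with this orientation pattern is already controlled: there are at most $\sqrt n$ choices of the second in-neighbour once the first is fixed. If instead the two arcs are $xv$ and $vy$ (one in, one out, with $c(xv)=c(vy)$), then the "moreover" statement of Proposition~\ref{prop:conversion} says the number of such pairs $(x,y)$ at a given $v$ is at most $n$; and if the two arcs are $vx$ and $vy$ (both pointing out of $v$), then $G[v, N^+_H(v)]$ being rainbow forces $c(vx)\neq c(vy)$, so this case is impossible. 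In every case, once we fix the central vertex $v$ and one of the two arcs, the "partner" vertex lies in a set of size at most $n$ (and often only $\sqrt n$), whereas in a properly coloured situation it would range over up to $n$ choices; crucially the savings comes from the remaining $rt-O(1)$ vertices of the blow-up, each of which has at most $n$ choices. Formally: fix which two arcs of $K(t)$ clash and which underlying vertex of $K$ they meet at — there are at most $\binom{rt}{2}\le (rt)^2$ such choices, absorbed into constants — then choose the shared vertex $v$ ($\le n$ ways), the first arc's far endpoint ($\le n$ ways), the partner endpoint ($\le n$ ways but "essentially free" — this is where we could even do better, but $\le n$ suffices once we've spent a saving elsewhere), and the remaining $rt-3$ vertices ($\le n^{rt-3}$ ways). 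Wait — that gives $n^{rt-1}$ times a constant, so I need the constant to come out as $rt$; this requires being a little careful that the two "clashing" arcs, together with the blow-up structure, actually pin down one coordinate completely rather than merely constraining it.

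The cleanest way to get the clean bound $rtn^{rt-1}$ is: a bad copy of $K(t)$ contains two adjacent arcs of the same colour, and by the case analysis above such a pair, once its shared vertex $v$ and one neighbour are named, determines its other neighbour (in the in/in and in/out cases, by Proposition~\ref{prop:conversion}; the out/out case does not occur). So a bad copy is determined by: an ordered list of all $rt$ vertices except that one designated vertex is redundant (it is forced by two of the others), giving at most $n^{rt-1}$ choices, times the number of ways to designate which vertex of the $rt$ plays the forced role, namely at most $rt$. Hence at most $rtn^{rt-1}$ bad copies, as required.

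The main obstacle is the bookkeeping in the case distinction: one must check that the in/in and in/out monochromatic-pair cases are the only ones that can arise inside a copy of $K(t)$ that genuinely lives in $H$ (the out/out case being ruled out by rainbowness of $G[v, N^+_H(v)]$), and that in each surviving case "one vertex is determined by two others," so that the naive $n^{rt}$ count drops by a factor of $n$ with only a factor-$rt$ overhead. The rest is routine counting; no regularity or extremal input is needed here — that comes afterwards, when one feeds these properly-coloured blow-ups into the Axenovich–Jiang–Tuza / Keevash–Mubayi–Sudakov–Verstraëte rainbow result.
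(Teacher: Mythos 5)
Your out/out and in/out cases are handled correctly, and the final count is in the right shape, but the in/in case is mishandled: the $\sqrt{n}$ clause of Proposition~\ref{prop:conversion} does not apply where you try to apply it, and you never use the hypothesis $\delta^+(K)\ge r-2$, which is exactly what the paper invokes to rescue this case.

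Concretely, for arcs $xv,wv\in E(H)$ with $c(\{x,v\})=c(\{w,v\})$, you invoke the clause ``for all $uv\in E(H)$, there are at most $\sqrt{n}$ vertices $w\in N^-_H(u)$ with $c(uv)=c(uw)$'' and conclude that at most $\sqrt{n}$ vertices $w\in N^-_H(v)$ can match $x$. But that clause controls in-neighbours of the \emph{tail} $u$ of the chosen arc. Applied to the arc $xv$ (so $u=x$), it bounds in-neighbours of $x$, not of $v$. To get a bound on in-neighbours of $v$, you would need some out-arc $vz\in E(H)$ of colour $c(\{x,v\})$, which the pure in/in configuration does not provide; and rainbowness of $G[v,N^+_H(v)]$ is likewise a statement about out-arcs at $v$, not in-arcs. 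Indeed, if the colour class $c(\{x,v\})$ is a monochromatic star of size $\Theta(n)$ centred at $v$ --- a configuration that the construction in Proposition~\ref{prop:conversion} permits and for which it adds no return arc --- then the number of in/in clashes at $v$ alone is already $\Theta(n^2)$, so no $\sqrt{n}$-type saving is available at $v$ directly.

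The paper's repair is to use $\delta^+(K)\ge r-2$: the cluster of $v$ in $K$ has out-degree at least $r-2$ and hence is out-adjacent to all but at most one of the other $r-1$ clusters, so it is out-adjacent to the cluster of at least one of $x,w$ (whenever these lie in distinct clusters). Consequently at least one of $vx,vw$ is also an arc of $K(t)\subseteq H$, producing a double edge and hence a directed path $wvx$ or $xvw$; this collapses in/in into in/out, where the ``moreover'' clause of Proposition~\ref{prop:conversion} gives the needed bound of at most $n$ pairs per $v$. Once you fold that step in, your closing count --- designate the forced position, at most $rt$ ways, then at most $n^{rt-1}$ for the remaining vertices --- goes through and matches the paper's.
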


\begin{proof}
Consider a copy of $K(t)$ in~$H$ with vertex classes $V_1, \dots, V_r$. 
Suppose that $K(t)$ is not properly coloured in~$G$, say $c(vx) = c(vy)$. 
Since $\delta^+(K) \ge r-2$, we may assume that $xvy$ is a directed path in~$H$. 
By the moreover statement of Proposition~\ref{prop:conversion}, there are at most $n$ choices of $(x,y)$ for a given~$v$. 
Therefore, there are at most $rt n^{rt-1}$ many such $K(t)$.
\end{proof}

\begin{lemma}[{\cite{AxenovichJiangTuza2003,KeevashMubayiSudakovVerstraete2007}}] \label{lma:KMSV}
Let $r \in \mathbb{N}$ and $r\ge 3$. Every proper edge-colouring of $K_r(r^3)$ contains a rainbow $K_r$.
\end{lemma}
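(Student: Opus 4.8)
The plan is to prove this by induction on $r$, the key being to pass from a rainbow $K_{r-1}$ inside one blow-up part to a rainbow $K_r$ by carefully choosing one extra vertex from a fresh part. The base case $r = 3$ says that every proper edge-colouring of $K_3(27)$ contains a rainbow triangle; this can be verified directly (indeed $K_3(t)$ with $t \ge 3$ suffices, since a proper colouring of a blown-up triangle has few repeated colours across the three bipartite pieces), or one can simply start the induction at a convenient small value.

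For the inductive step, suppose the claim holds for $r-1$ and consider a proper edge-colouring of $K_r(r^3)$ with parts $V_1, \dots, V_r$, each of size $r^3$. The idea is to first fix a large ``core'' of candidate vertices in $V_r$, then find a rainbow $K_{r-1}$ in $\bigcup_{i<r} V_i$ whose colours avoid too much, and finally use the size of $V_r$ to extend. Concretely, I would pick any single vertex $w \in V_r$; the edges from $w$ to the remaining parts use at most $(r-1)r^3$ colours in total, so inside $K_{r-1}(r^3)$ on $V_1 \cup \dots \cup V_{r-1}$ we still want a rainbow copy that also avoids all colours appearing at $w$. This is where one needs a slightly stronger inductive hypothesis: rather than just ``contains a rainbow $K_{r-1}$'', prove ``after deleting any set of at most $C$ colours, $K_{r-1}(t)$ still contains a rainbow $K_{r-1}$,'' for an appropriate threshold $t = t(r)$ and budget $C$; deleting a bounded number of colours only removes a bounded number of edges from each bipartite piece, and a standard greedy/counting argument recovers the rainbow $K_{r-1}$ provided $t$ is large enough. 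Once we have a rainbow $K_{r-1}$ on vertices $v_1, \dots, v_{r-1}$ using colours disjoint from those at $w$, then $w v_1 \cdots v_{r-1}$ is automatically rainbow, since the edges $wv_i$ are pairwise distinct in colour (properness at $w$) and distinct from the colours inside the $K_{r-1}$.

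The main obstacle is bookkeeping the constant $r^3$: one must check that the chosen blow-up size is large enough both to run the strengthened induction (absorbing the deleted colours) and to carry out the greedy extension. The cleanest route is to prove the stronger statement ``for every proper edge-colouring of $K_r(r^3)$ and every colour set $L$ with $|L| \le r^3$, there is a rainbow $K_r$ using no colour of $L$'' — the original lemma is the case $L = \emptyset$. With this formulation the induction closes: at vertex $w$ we forbid $L$ together with the $\le (r-1)r^3$ colours on edges at $w$, a set of size $\le r^4 \le (r-1)^3 \cdot$(something manageable) after adjusting constants, and the only real verification is that $r^3$ dominates the recursive demand, which follows since the forbidden set grows polynomially while the part size is fixed at $r^3$ — so in fact it is cleaner to prove the statement with part size $r^3$ and forbidden-colour budget scaled to $r^2$, then note $r^3 \ge$ (budget used) $+$ (enough room). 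I would present the strengthened lemma, do the one-line base case, and then the extension step as above; the counting that ``deleting $k$ colours from a proper colouring of $K_{r-1}(t)$ leaves each of the $\binom{r-1}{2}$ bipartite parts with at least $t^2 - k$ edges, hence still contains the required rainbow clique by induction'' is the routine part I would not grind through here.
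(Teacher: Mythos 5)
This lemma is quoted from the literature and the paper gives no proof of it, so the only question is whether your argument is correct on its own terms. It is not: the inductive bookkeeping fails at the very first step. When you fix $w \in V_r$ and try to forbid every colour appearing at $w$, you are forbidding exactly $(r-1)r^3$ colours, since the edge-colouring is proper and $w$ has $(r-1)r^3$ neighbours. That is $\Theta(r^4)$, which overwhelms any forbidden-colour budget that scales like $r^2$ or $r^3$ (the scales you propose), so the strengthened hypothesis you would feed to the $(r-1)$-case is not of the right form and the recursion cannot close. A secondary error: deleting $k$ colours from a proper colouring of a $t\times t$ bipartite part can remove up to $kt$ edges (each colour class is a matching of size up to $t$), not $k$ edges, so ``$t^2-k$ edges remain'' is also wrong; with $k \approx r^4$ and $t=r^3$ you could in principle lose everything.

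The fix is to reverse the order of quantifiers: choose the rainbow $K_{r-1}$ on $v_1,\dots,v_{r-1}$ first, then pick $w \in V_r$. Now $w$ only needs to avoid the $\binom{r-1}{2}$ colours already used; for each such colour $c$ and each $j\in[r-1]$, properness at $v_j$ forces at most one $w$ with $c(wv_j)=c$, so at most $\binom{r-1}{2}(r-1) < r^3$ vertices of $V_r$ are excluded, and a good $w$ exists. Once this reversal is made, there is no need for induction on $r$ or a forbidden-colour list at all: one simply picks $v_1\in V_1,\dots,v_r\in V_r$ greedily, noting at step $i$ that at most $\binom{i-1}{2}(i-1)<r^3=|V_i|$ choices are blocked. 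This direct greedy argument is shorter, closes immediately, and shows in fact that much smaller blow-up sizes than $r^3$ already suffice.
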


\subsection{Regularity lemma}\label{sec:regularity}

Let $G$ be a bipartite graph with vertex classes~$A$ and~$B$. 
For non-empty sets $X\subseteq A$, ${Y\subseteq B}$, we define the \emph{density of $G[X,Y]$} to be $d_G(X, Y)=e_G(X,Y)/|X||Y|$.
Let $\eps>0$. 
We say that $G$ is \emph{$\eps$-regular} if, for all sets $X \subseteq A$ and $Y \subseteq B$, with $|X|\geq \eps |A|$ and $|Y| \geq \eps |B|$ we have $|d_G(A,B) - d_G(X,Y)| < \eps$.
The following simple result follows immediately from this definition.


\begin{proposition}\label{prop:slice}
Let $\eps>0$ and $G$ be a bipartite graph with vertex classes $A$ and $B$.
Suppose that $G$ is $\eps$-regular with density~$d$.
Let $c > \eps$,  $A' \subseteq A$ and $B' \subseteq B$ with  $|A'| \ge c |A|$ and $|B'| \ge c |B|$.
Then $G[A'\cup B']$ is $2\eps/c$-regular with density at least $d - \eps$.
\end{proposition}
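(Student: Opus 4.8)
The plan is to verify the two conclusions of Proposition~\ref{prop:slice} directly from the definition of $\varepsilon$-regularity applied to $G$, using $A'$ and $B'$ as test sets. First I would establish the density bound. Since $|A'| \ge c|A| > \varepsilon |A|$ and $|B'| \ge c|B| > \varepsilon|B|$, the pair $(A', B')$ is an admissible test pair for the $\varepsilon$-regularity of $G$, so $|d_G(A,B) - d_G(A', B')| < \varepsilon$; as $d_G(A,B) = d$, this gives $d_G(A', B') > d - \varepsilon$, which is the claimed lower bound on the density of $G[A' \cup B']$.

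Next I would check $2\varepsilon/c$-regularity of $G[A' \cup B']$. Let $X \subseteq A'$ and $Y \subseteq B'$ with $|X| \ge (2\varepsilon/c)|A'|$ and $|Y| \ge (2\varepsilon/c)|B'|$. Then $|X| \ge (2\varepsilon/c)|A'| \ge (2\varepsilon/c)\cdot c|A| = 2\varepsilon|A| \ge \varepsilon|A|$, and similarly $|Y| \ge \varepsilon|B|$, so $(X,Y)$ is an admissible test pair for the $\varepsilon$-regularity of $G$ itself. Hence $|d_G(A,B) - d_G(X,Y)| < \varepsilon$. Combining with the previous paragraph via the triangle inequality,
\begin{align*}
|d_G(A', B') - d_G(X,Y)| \le |d_G(A',B') - d_G(A,B)| + |d_G(A,B) - d_G(X,Y)| < \varepsilon + \varepsilon = 2\varepsilon,
\end{align*}
and since $2\varepsilon \le 2\varepsilon/c$ (as $c \le 1$), this is well within the required bound; in fact one gets $2\varepsilon$-regularity, which is at least as strong. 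So $G[A' \cup B']$ is $2\varepsilon/c$-regular.

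There is essentially no obstacle here: the whole content is bookkeeping with the inequalities $|X| \ge (2\varepsilon/c)|A'| \ge 2\varepsilon|A|$ to confirm that subsets large within $A'$ are still large within $A$. The only point requiring a modicum of care is the constant in the regularity parameter — one should make sure the factor chosen ($2\varepsilon/c$ rather than, say, $\varepsilon/c$) is consistent with both the admissibility threshold needed to invoke $G$'s regularity and the triangle-inequality loss of $\varepsilon$ incurred by comparing densities through $d_G(A,B)$. Writing the chain of inequalities out once makes both checks transparent, and the proof is complete.
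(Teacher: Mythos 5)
Your proof is correct and is the standard bookkeeping argument one would write here; the paper itself does not spell out a proof (it simply notes the proposition ``follows immediately from this definition''), and your argument is exactly what is intended. One small caveat: your parenthetical claim that ``in fact one gets $2\varepsilon$-regularity'' is not justified by what you wrote. You verified the density condition only for test sets $X\subseteq A'$, $Y\subseteq B'$ of relative size at least $2\varepsilon/c$; to conclude $2\varepsilon$-regularity you would need to handle the strictly larger collection of test sets of relative size at least $2\varepsilon$, and for those the chain $|X|\ge 2\varepsilon|A'|\ge 2\varepsilon c|A|$ no longer guarantees $|X|\ge\varepsilon|A|$ unless $c\ge 1/2$. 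The factor $1/c$ in the statement is there precisely to make the admissibility step go through for arbitrary $c>\varepsilon$, so the side remark should be dropped; the proof of the proposition as stated stands.
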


Let $G$ be a digraph. 
We say that $\mathcal{Q}$ is an \emph{$(\eps,d,m,k)$-regular partition of~$G$}, if 
\begin{enumerate}[label={(Q\arabic*)}]
	\item $\mathcal{Q}= \{V_0, V_1,  \dots, V_k\}$ is a partition of~$V(G)$; 
	\item \label{itm:Q1} $|V_0| \le \eps |V(G)|$;
	\item \label{itm:Q2} $|V_1| = \dots = |V_k| = m$;
	\item \label{itm:Q3} for all distinct $i,j \in [k]$, the graph~$G[V_i, V_j]$ is $\eps$-regular and has density either $0$ or at least~$d$;
	\item \label{itm:Q4} for all distinct $i,j \in [k]$, the graph~$G^{\pm}[V_i, V_j]$ is $\eps$-regular and has density either $0$ or at least~$d$;
	\item \label{itm:Q5} for all $i \in [k]$, $G[V_i]$ is empty.
\end{enumerate}

We use the regularity lemma for digraphs attributed to Alon and Shapira~\cite{AlonShapira}. 
In particular, we will use its degree form that can be derived from the standard version (see~\cite[{Lemma~7.3}]{KOsurvey} for a sketch of the proof for the undirected case that can be adapted to the digraph setting). 

\begin{lemma}[Degree form of the diregularity lemma~\cite{AlonShapira}] \label{lma:szemeredi}
Let $1/n \ll 1/N \ll \eps,\delta^+,d$. 
Let $G$ be a digraph on $n$ vertices with $\delta^+(G) \ge \delta^+ n $.
Then there exists a spanning subgraph~$G'$ of~$G$ and an $(\eps, d,m,k)$-regular partition $\mathcal{Q}= \{V_0, V_1,  \dots, V_k\}$ of~$G'$ with $1/\eps \leq k \leq N$ and $\Delta ( G - G') \le  (d + \eps)n$.
\end{lemma}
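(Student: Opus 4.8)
The plan is to deduce the degree form from the standard diregularity lemma of Alon and Shapira~\cite{AlonShapira} by the usual clean-up procedure, exactly as the undirected degree form is obtained in~\cite[Lemma~7.3]{KOsurvey}. Two points are specific to the directed setting: one needs a partition that is simultaneously $\eps'$-regular for $G$, in \emph{both} orderings of each pair of parts, and for $G^{\pm}$; and the degree bound is for out-degrees. For the first point I would invoke the version of Szemer\'edi's lemma that regularises a bounded family of (di)graphs on a common vertex set at once, which follows from the usual energy-increment argument applied to the sum of the mean-square densities of the (finitely many) graphs involved; equivalently, one may regularise $G$, pass to a common refinement regularising $G^{\pm}$ as well, and iterate.

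Concretely, I would fix constants $1/n \ll 1/N \ll \eps' \ll \eps,\delta^+,d$, apply this version of the lemma with regularity parameter $\eps'$ and at least $1/\eps'$ parts, and obtain a partition $\{V_0',V_1,\dots,V_k\}$ with $1/\eps' \le k \le N$, $|V_0'|\le\eps' n$ and $|V_1|=\dots=|V_k|=m$ in which all but at most $3\eps' k^2$ ordered pairs $(i,j)$ are \emph{good}, meaning that $G[V_i,V_j]$, $G[V_j,V_i]$ and $G^{\pm}[V_i,V_j]$ are all $\eps'$-regular. Then I would take $G'$ to be $G$ minus every arc $uv$ for which $u\in V_0'$ or $v\in V_0'$; or $u,v$ lie in a common part; or $u\in V_i$, $v\in V_j$ with $(i,j)$ not good; or $u\in V_i$, $v\in V_j$ with $(i,j)$ good but one of the three associated bipartite graphs having positive density less than~$d$. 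Since arcs are deleted only pairwise, each surviving pair satisfies $G'[V_i,V_j]=G[V_i,V_j]$ and $(G')^{\pm}[V_i,V_j]=G^{\pm}[V_i,V_j]$, and so is still $\eps'$-regular — hence $\eps$-regular — of density either $0$ or at least~$d$; together with $G'[V_i]$ being empty this yields \ref{itm:Q3}--\ref{itm:Q5}, while \ref{itm:Q1} and \ref{itm:Q2} are maintained after the small adjustments of $V_0$ described next. A vertex $v\in V_i$ loses at most $|V_0'|+m\le 2\eps' n$ arcs of the first two kinds. For the third kind: all but at most $\sqrt{\eps'}k$ indices $i$ have at most $\sqrt{\eps'}k$ non-good partners, so after moving the classes of the exceptional indices into $V_0$ (at most $\sqrt{\eps'}n$ vertices) every surviving vertex loses at most $\sqrt{\eps'}k\cdot m = \sqrt{\eps'}n$ such arcs. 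For the fourth kind: the $\eps'$-regularity of a surviving low-density pair $(V_i,V_j)$ bounds by $\eps' m$ the number of $v\in V_i$ with more than $(d+\eps')m$ out-neighbours in $V_j$, and averaging over the low-density partners of $i$ — having first moved into $V_0$ the few classes, and then the few remaining vertices, with atypically many low-density out-neighbours — shows that every remaining vertex loses at most $(d+\eps/2)n$ arcs of this kind. Trimming the surviving classes back down to a common size and checking $|V_0|\le\eps n$ throughout then finishes the argument, after relabelling~$\eps$.

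The argument is in essence a bookkeeping exercise, and the step that requires real care is the fourth one: unlike the arcs at the exceptional set or in irregular pairs, the arcs removed because of low-density pairs need not be distributed evenly among the vertices of a class, so one genuinely has to check that only $o(n)$ vertices in total lie in misbehaving classes or are themselves misbehaving, and hence can be absorbed into $V_0$ without violating $|V_0|\le\eps n$. The other non-standard ingredient, regularising $G$ and $G^{\pm}$ simultaneously, is routine: it costs nothing beyond replacing a single graph by a fixed-size family of graphs in the energy-increment step.
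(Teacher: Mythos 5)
The paper does not actually prove this lemma; it cites Alon--Shapira for the standard diregularity lemma and points to the K\"uhn--Osthus survey for the undirected degree-form clean-up, remarking only that the argument ``can be adapted to the digraph setting.'' Your proposal is an attempt to carry out exactly that adaptation, and your first two ingredients (simultaneously regularising the finitely many relevant bipartite graphs, and removing arcs touching $V_0$, inside parts, and across irregular pairs) are standard and fine. There is, however, a genuine gap in the fourth removal step.

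You propose to delete every arc between $V_i$ and $V_j$ as soon as \emph{any one} of $G[V_i,V_j]$, $G[V_j,V_i]$, $G^{\pm}[V_i,V_j]$ has positive density less than $d$, and to bound the resulting degree loss via the observation that ``$\eps'$-regularity of a surviving low-density pair $(V_i,V_j)$ bounds by $\eps' m$ the number of $v \in V_i$ with more than $(d+\eps')m$ out-neighbours in $V_j$.'' That observation is only valid when $G[V_i,V_j]$ itself has density below $d$. But the trigger for removal can also be that $G^{\pm}[V_i,V_j]$ has positive density below $d$ while $G[V_i,V_j]$ and $G[V_j,V_i]$ both have high density (for instance density close to $1-d/2$ each, so $G^{\pm}$ has density close to $d/2$, possible when $d$ is small). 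In that case your rule deletes essentially all of $v$'s out-arcs to $V_j$, of which there are about $(1-d/2)m$ for a typical $v$, and regularity gives you no help: these vertices are not atypical, so they cannot be absorbed into $V_0$. If this happens for many indices $j$, a single vertex can lose close to $n$ out-arcs, obliterating the claimed bound $\Delta(G-G') \le (d+\eps)n$.

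The repair is to be more surgical. Decompose $G$ into its one-directional part $G \setminus G^{\pm}$ and its bidirectional part $G^{\pm}$, and regularise the three bipartite graphs $(G\setminus G^{\pm})[V_i,V_j]$, $(G\setminus G^{\pm})[V_j,V_i]$, $G^{\pm}[V_i,V_j]$ simultaneously. When $(G\setminus G^{\pm})[V_i,V_j]$ has low positive density, delete only those one-directional arcs; when $G^{\pm}[V_i,V_j]$ has low positive density, delete only the double arcs. Now, for each $j$, the number of out-arcs a typical vertex of $V_i$ loses is at most $(d+\eps')m$ from each of the two categories, so the total out-degree loss is $O(d)n$, and after the analogous accounting for in-arcs one relabels the constants to obtain the stated bound. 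The regularity and density conditions \ref{itm:Q3} and \ref{itm:Q4} follow because $G'[V_i,V_j]$ is the edge-disjoint union of the surviving one-directional and double parts, each of which is $\eps'$-regular with density $0$ or at least $d$, and an edge-disjoint union of $\eps'$-regular bipartite graphs on the same vertex classes is $2\eps'$-regular with density the sum of the parts. With this change the rest of your bookkeeping (moving bad classes and bad vertices into $V_0$, trimming the parts to a common size) goes through.
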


Let $m,k \in \mathbb{N}$ and $\eps, d>0$.
Let $G$ be a digraph on $n$ vertices and $\mathcal{Q} = \{V_0, V_1, \dots, V_k\}$ an $(\eps, d,m,k)$-regular partition of~$G$. 
We say that a digraph $R$ is an \emph{$(\eps, d)$-reduced digraph respecting~$(G,\mathcal{Q})$} if the followings hold. 
The vertex set of~$R$ is a subset of clusters $\{V_i : i \in [k]\}$.
For each $U,U' \in V(R)$, if $U U'$ is a directed edge of~$R$, then the subgraph $G[U,U']$ is $\eps$-regular and has density at least~$d$.
Moreover, if $UU'$ is in $R^{\pm}$, then the subgraph $G^{\pm}[U,U']$ is $\eps$-regular and has density at least~$d$.

Unlike its undirected version, the reduced digraph is not uniquely defined. 
For example, if $G$ is an oriented graph such that $G[U,U']$ and $G[U',U]$ are $\eps$-regular and have density at least~$d$, then a reduced digraph can contain either $UU'$ or $U'U$ but not both. 

\begin{lemma} \label{lma:reducedgraph}
Let $1/n \ll 1/k \ll \eps,\delta^+,d$. 
Let $G$ be a digraph on $n$ vertices with $\delta^+(G) \ge \delta^+n$ and an $(\eps, d,m,k)$-regular partition $\mathcal{Q}= \{V_0, V_1,  \dots, V_k\}$.
Then there exists an $(\eps, d)$-reduced digraph~$R$ respecting $(G,\mathcal{Q})$ such that $\delta^+(R) \ge (\delta^+ -2 \eps)k$.
\end{lemma}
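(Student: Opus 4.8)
The plan is to derive the reduced digraph by a standard counting argument: for each cluster $V_i$ with $i \in [k]$, we bound how many of the out-edges leaving $V_i$ in $G$ can fail to be ``captured'' by a regular pair, and then select, for each ordered pair of clusters that spans a dense $\eps$-regular pair, whether to include the corresponding directed edge in $R$. First I would set $V(R) = \{V_1, \dots, V_k\}$ (discarding $V_0$), and for each ordered pair $(i,j)$ with $i \ne j$ such that $G[V_i,V_j]$ is $\eps$-regular of density at least $d$, place the arc $V_iV_j$ in $R$; when both $G[V_i,V_j]$ and $G[V_j,V_i]$ qualify we include both arcs (so that $R$ may genuinely be a digraph rather than an oriented graph, and the remark preceding the lemma about non-uniqueness is respected — in fact here we can afford to keep all qualifying arcs). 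Property \ref{itm:Q3} of the regular partition guarantees that each such pair is either empty or has density at least $d$, so this is well-defined, and the resulting $R$ is by construction an $(\eps,d)$-reduced digraph respecting $(G,\mathcal{Q})$.

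The main computation is the out-degree bound. Fix $i \in [k]$ and a vertex $v \in V_i$. Since $\delta^+(G) \ge \delta^+ n$, we have $|N^+_G(v)| \ge \delta^+ n$. I would account for the out-neighbours of $v$ that are \emph{not} counted by arcs of $R$ leaving $V_i$: those lying in $V_0$ (at most $|V_0| \le \eps n$ of them by \ref{itm:Q1}), those lying in $V_i$ itself (but $G[V_i]$ is empty by \ref{itm:Q5}, so none), and those lying in a cluster $V_j$ with $V_iV_j \notin E(R)$, i.e.\ for which $G[V_i,V_j]$ has density $0$ (contributing nothing) — so the only genuine loss is from $V_0$. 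Hence $v$ has at least $\delta^+ n - \eps n$ out-neighbours distributed among clusters $V_j$ with $V_iV_j \in E(R)$, and since each such cluster has size $m \le n/k$, the number of such clusters is at least $(\delta^+ - \eps)n/m \ge (\delta^+ - \eps)k$ (using $km \le n$). Averaging this bound over all $v \in V_i$ — or simply noting it holds for every $v$ — gives $d^+_R(V_i) \ge (\delta^+ - \eps)k \ge (\delta^+ - 2\eps)k$, which is even slightly stronger than claimed; the factor $2\eps$ rather than $\eps$ is just slack, presumably for later convenience or to absorb the rounding $km = n - |V_0| \ge (1-\eps)n$ cleanly. In fact, being careful, $m = (n - |V_0|)/k$ so $(\delta^+ - \eps)n/m = (\delta^+ - \eps)kn/(n - |V_0|) \ge (\delta^+ - \eps)k \ge (\delta^+ - 2\eps)k$, and the hierarchy $1/n \ll 1/k \ll \eps$ ensures all the implicit divisibility and rounding issues are harmless.

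There is essentially no hard obstacle here; this is a routine ``move to the reduced graph'' lemma and the only thing to be slightly careful about is that we are in the digraph setting, so I must check that \ref{itm:Q5} kills the within-cluster out-neighbours and that the density dichotomy in \ref{itm:Q3} means non-edges of $R$ really carry no out-neighbours of $v$ beyond a negligible amount (they carry none at all, since density $0$ means no edges). One small point worth stating explicitly in the write-up: the bound on $d^+_R$ holds not just on average but for every vertex of every cluster, so the minimum out-degree conclusion is immediate and we do not even need an averaging step. I would write the proof in three or four sentences: define $R$, bound $|N^+_G(v) \setminus (V_0 \cup \bigcup_{V_iV_j \in E(R)} V_j)|$, divide by $m$, and conclude.
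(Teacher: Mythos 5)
Your proposed construction of $R$ — include the arc $V_iV_j$ whenever $G[V_i,V_j]$ is $\eps$-regular of density at least $d$, keeping both arcs if both directions qualify — does not in general produce an $(\eps,d)$-reduced digraph respecting $(G,\mathcal{Q})$. The definition carries a second clause: if $UU'$ lies in $R^{\pm}$ (i.e.\ both $UU'$ and $U'U$ are arcs of $R$), then $G^{\pm}[U,U']$ must itself be $\eps$-regular of density at least $d$. This is not implied by $G[U,U']$ and $G[U',U]$ each being dense. The remark preceding the lemma is making precisely this point with the oriented-graph example: there $G[U,U']$ and $G[U',U]$ are both dense, yet $G^{\pm}$ is empty, so $R$ may contain at most one of the two arcs. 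You cite that remark and claim your construction respects it, but your construction is the one it forbids. The regularity partition does guarantee (Q4) that $G^{\pm}[V_i,V_j]$ has density $0$ or $\ge d$, but nothing forces the dense alternative just because both one-way densities are large.

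This is not a cosmetic gap. Once you are forced to pick only one arc out of some pairs $\{V_iV_j, V_jV_i\}$, your pointwise out-degree computation no longer goes through: a vertex $v \in V_i$ may have many out-neighbours sitting in clusters $V_j$ for which the orientation was resolved as $V_jV_i$, and these contribute nothing to $d^+_R(V_i)$. One has to make the orientation choices in a coordinated way, and it is not clear that a greedy or majority rule works uniformly over all clusters. The paper resolves this by a probabilistic construction: when $G^{\pm}[V_i,V_j]$ is sparse, include $V_iV_j$ with probability $d_G(V_i,V_j)/(d_G(V_i,V_j)+d_G(V_j,V_i))$ and otherwise include $V_jV_i$; then $\Pr[V_iV_j \in E(R)] \ge d_G(V_i,V_j)$, giving $\E[d^+_R(V_i)] \ge (\delta^+ - \eps)k$, and a Chernoff plus union bound over the $k$ clusters yields $\delta^+(R) \ge (\delta^+ - 2\eps)k$ with positive probability. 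The extra $\eps$ of slack you dismiss as harmless is exactly what absorbs the concentration step; your version of the argument, which would give the stronger bound $(\delta^+ - \eps)k$, only "works" because the invalid construction sidesteps the orientation choice entirely.
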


The proof is very similar to the proof of Lemma~3.2 in~\cite{KellyKuhnOsthus}.

\begin{proof}
Recall that if $d_{G^{\pm}}(V_i,V_j) >0$, then $d_{G^{\pm}}(V_i,V_j)  \ge d$.
We define a random digraph~$R$ on~$\{V_i : i \in [k]\}$ as follows. 
For all $i, j \in [k]$ with $i < j$, (independent of other choices)
\begin{itemize}
	\item if $d_{G^{\pm}}(V_i,V_j) >0$, then $V_iV_j \in E(R^{\pm})$ with probability~$1$;
	\item if $d_{G^{\pm}}(V_i,V_j) =0$ and $d_{G}(V_i,V_i) >0$, then $V_iV_j \in E(R)$ with probability $\frac{ d_{G}(V_i,V_j) }{d_{G}(V_i,V_j)+d_{G}(V_j,V_i) }$, otherwise $V_jV_i \in E(R)$. 
\end{itemize}
Note that 
\begin{align*}
		\mathbb{P}( V_iV_j \in E(R) ) & = \mathbb{P}( V_iV_j \in E(R-R^{\pm}) )+ \mathbb{P}( V_iV_j \in E( R^{\pm} ) )\\
		& = \mathbbm{1}(d_{G^{\pm}}(V_i,V_j) =0) \frac{ d_{G}(V_i,V_j) }{d_{G}(V_i,V_j)+d_{G}(V_j,V_i) } + \mathbbm{1}(d_{G^{\pm}}(V_i,V_j) >0)\\
		&\ge d_{G}(V_i,V_j).
\end{align*}
Hence
\begin{align*}
	\mathbb{E} ( d^+_R(V_i)) 
	\ge \sum_{j \in [k] \setminus i} d_{G}(V_i,V_j) 
	= \frac{ \sum_{v \in V_i} d^+_{G \setminus V_0}(v)}{m^2} 
	\ge \frac{ (\delta^+ - \eps)n}{m} \ge (\delta^+ - \eps)k.
\end{align*}
Therefore, a standard application of the Chernoff and union bounds shows that with positive probability, $\delta^+(R) \ge (\delta^+ - 2\eps)k$.
\end{proof}

We say that $\varphi$ is a \emph{homomorphism  from a digraph~$H$ to a digraph~$G$} if $\varphi : V(H) \rightarrow V(G)$ is such that if $uv \in E(H)$, then $\varphi(u)\varphi(v) \in E(G)$.

We also use the `counting lemma'. 

\begin{lemma}[Counting lemma] \label{lma:counting}
Let $1/m \ll c \ll \eps \ll d,1/k,1/\Delta,1/s$.
Let $G$ be a digraph with an $(\eps, d, m ,k)$-regular partition~$\mathcal{Q}$.
Let $R$ be an $(\eps,d)$-reduced digraph respecting~$(G, \mathcal{Q})$ with $V(R) = \{V_i: i \in [k]\}$.
Let $H$ be a digraph on~$s$ vertices with $\Delta(H) \le \Delta$ and a homomorphism~$\varphi$ from~$H$ to~$R$. 
Let $u,u' \in V(H)$ with $uu',u'u \notin E(H)$. 
Then there exist $V'_{\varphi(u)} \subseteq V_{\varphi(u)}$ and $V'_{\varphi(u')} \subseteq V_{\varphi(u')}$ with $|V'_{\varphi(u)} |, |V'_{\varphi(u')} |\ge (1-3\Delta \eps)m $ such that, for every pair $(x,x') \in V'_{\varphi(u)} \times V'_{\varphi(u')}$, there are at least $cn^{s-2}$ injective homomorphisms~$\varphi'$~from~$H$ to~$G$ such that $\varphi'(v) \in \varphi(v)$ for all~$v \in V(H)$, $\varphi'(u) = x$ and $\varphi'(u') = x'$.
\end{lemma}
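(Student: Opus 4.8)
The plan is to prove this by induction on the number of vertices $s$ of $H$, peeling off one vertex at a time and tracking how the regularity of the relevant bipartite graphs degrades. First I would reduce to the case where $H$ is an oriented graph with no pair of vertices joined by both arcs: if $uv, vu \in E(H)$, I can simply work with the graph $G^{\pm}$ on the clusters $\varphi(u)\varphi(v)$, which is $\eps$-regular of density at least $d$ by property \ref{itm:Q4} and the definition of the reduced digraph, so I treat that case with the same machinery applied to $G^{\pm}$ in place of $G$. For the induction, order $V(H)$ so that $u, u'$ come last, and process the remaining vertices $w_1, \dots, w_{s-2}$ one by one. At each step I want to pass to large subsets $V'_{\varphi(w_i)} \subseteq V_{\varphi(w_i)}$ of every cluster so that every vertex surviving in $V'_{\varphi(w_i)}$ has close to the expected degree (namely at least $(d - O(\eps))m$, after restriction) into the current surviving subsets of each of its out- and in-neighbours in $R$. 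This is the standard ``regularity implies almost all vertices are typical'' observation: in an $\eps$-regular pair of density $\ge d$, all but at most $\eps m$ vertices on one side have degree at least $(d-\eps)m$ into any fixed subset of size $\ge \eps m$ on the other side.

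The key bookkeeping step is to control how the regularity parameter deteriorates under restriction. Since $\Delta(H) \le \Delta$, each cluster $V_{\varphi(w)}$ is restricted at most $\Delta$ times (once for each neighbour of $w$ in $H$ that is processed before $w$, plus possibly when $u$ or $u'$ is fixed), and at each restriction its size drops by at most $3\eps m$ — hence it stays of size at least $(1 - 3\Delta\eps)m \ge \eps m / \eps^{1/2}$, say. By Proposition~\ref{prop:slice}, an $\eps$-regular pair restricted to subsets of relative size $\ge 1 - 3\Delta\eps$ on each side is still $2\eps/(1-3\Delta\eps)$-regular, which is at most, say, $3\eps$ provided $\eps \ll 1/\Delta$; so iterating $\Delta$ times keeps every relevant pair $(3\Delta)^{\Delta}\eps$-regular of density at least $d - 3\Delta\eps \ge d/2$, and this is still $\ll d$ by the hierarchy $\eps \ll d, 1/\Delta$. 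The sets $V'_{\varphi(u)}, V'_{\varphi(u')}$ in the statement are then exactly the subsets of $V_{\varphi(u)}, V_{\varphi(u')}$ that survive this cleaning, of size $\ge (1 - 3\Delta\eps)m$ as required, and ``typical'' with respect to the cleaned pairs.

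Given this, the count is a routine greedy embedding. Fix any $(x, x') \in V'_{\varphi(u)} \times V'_{\varphi(u')}$ with $\varphi'(u) = x$, $\varphi'(u') = x'$. Embed $w_{s-2}, w_{s-3}, \dots, w_1$ in reverse of the processing order (so that when we embed $w_i$, all of its $H$-neighbours that were processed \emph{after} it — which include $u, u'$ and the later $w_j$'s — are already embedded); at the step embedding $w_i$ we must choose a vertex of $V'_{\varphi(w_i)}$ that is correctly adjacent in $G$ to the $\le \Delta$ already-embedded neighbours, and the cleaning guarantees that each such adjacency constraint cuts the candidate set down only by a factor $\ge d/2$ (intersecting $\le \Delta$ of them gives $\ge (d/2)^{\Delta} m$ choices — here I also subtract the $\le s$ already-used vertices, negligible since $m$ is large), and after the choice I restrict the remaining candidate clusters again to preserve typicality for the subsequent steps, which is fine because each cluster is restricted at most $\Delta$ times total, already accounted for. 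Multiplying the lower bounds over the $s-2$ steps yields at least $(d/2)^{(\Delta+1) s} m^{s-2} \ge c n^{s-2}$ injective homomorphisms, using $1/m \ll c$ and $m \ge n/(2k)$ (valid since $|V_0| \le \eps n$ and there are $k$ clusters of size $m$); injectivity is automatic since at each step we avoid the previously chosen vertices and different clusters are disjoint.

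\textbf{Main obstacle.} The only genuinely delicate point is the bookkeeping of how many times, and in what order, each cluster gets restricted, so that the final regularity parameter is a fixed function of $\eps$ and $\Delta$ (not of $s$ or $k$) and stays $\ll d$; the cleanest way to handle this is to do all restrictions \emph{up front}, before embedding anything, by restricting cluster $V_{\varphi(w)}$ once for each $H$-neighbour of $w$ (a fixed bound $\le \Delta$), rather than interleaving restriction with embedding — this decouples the regularity accounting from the greedy process entirely, at which point both halves become standard.
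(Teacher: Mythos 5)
Your high-level strategy --- clean the clusters first, then greedily embed $H$ vertex by vertex --- is the right starting point and is essentially the standard embedding argument. The reduction of double-edge pairs to $G^{\pm}$ via property (Q4) is also correct. However, the ``do all restrictions up front'' resolution you propose in your final paragraph does not actually close the gap, and the genuinely delicate point is not the one you flag (degradation of the regularity parameter under slicing, which is handled cheaply by Proposition~\ref{prop:slice} since each cluster only ever loses an $O(\Delta\eps)$-fraction of its vertices). The real difficulty, which your argument does not address at all, concerns $H$-vertices $w$ that are $H$-neighbours of \emph{both} $u$ and $u'$. Nothing in the hypotheses forbids this: only $uu', u'u \notin E(H)$ is assumed, and common neighbours are possible (and do occur in the paper's own application to the digraph $J$ in the proof of Lemma~\ref{lem:closed}).

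For such a $w$, once $\varphi'(u)=x$ and $\varphi'(u')=x'$ are fixed the candidate set for $w$ is $V_{\varphi(w)} \cap N(x) \cap N(x')$, and the size of this \emph{double} intersection is a property of the \emph{pair} $(x,x')$, not of $x$ and $x'$ separately. Your cleaning only removes vertices that are atypical into the full clusters $V_{\varphi(w)}$, so it guarantees $|N(x)\cap V_{\varphi(w)}|\ge(d-\eps)m$ and $|N(x')\cap V_{\varphi(w)}|\ge(d-\eps)m$ individually --- but two $(d-\eps)m$-subsets of $V_{\varphi(w)}$ can be disjoint when $d<1/2$. Regularity of the pair $(V_{\varphi(u')},V_{\varphi(w)})$ does imply that, for each fixed typical $x$, at most $\eps m$ vertices $x'$ have $|N(x')\cap N(x)\cap V_{\varphi(w)}|$ too small; but which $x'$ are bad depends on $x$, and these bad pairs can be spread so thinly that every product $V'_{\varphi(u)}\times V'_{\varphi(u')}$ with $|V'_{\varphi(u)}|,|V'_{\varphi(u')}|\ge(1-3\Delta\eps)m$ still contains one. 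Concretely: fix a bijection $x_i\leftrightarrow x'_i$ between the two clusters, take $G[V_{\varphi(u')},V_{\varphi(w)}]$ to be a random bipartite graph of density $d$, and take $N(x_i)\cap V_{\varphi(w)}$ to be a uniformly random $dm$-subset of $V_{\varphi(w)}\setminus N(x'_i)$; with high probability both pairs are $\eps$-regular of density roughly $d$, yet $N(x_i)\cap N(x'_i)\cap V_{\varphi(w)}=\emptyset$ for every $i$, so the bad pairs contain a perfect matching and cannot be avoided by deleting only $3\Delta\eps m$ vertices from each side. Thus the step where you assert ``the cleaning guarantees that each such adjacency constraint cuts the candidate set down only by a factor $\ge d/2$'' is unjustified once both $x$ and $x'$ have been placed. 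A correct proof needs an additional idea here --- for instance, embedding $u'$ last and counting the fraction of partial embeddings of $H-u'$ that extend to $x'$ via the slicing lemma applied to the restricted sets, or imposing a joint ``robustness'' condition on the pair rather than cleaning each side independently --- and this is where the real content of the lemma lies.
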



\section{Almost rainbow-$K_r$-tilings} \label{sec:almosttiling}

In this section we focus on finding a rainbow-$K_r$-tiling covering almost all vertices of our edge-coloured graph. Recall that one of the key components of our proof is to translate the edge-colouring problem into a digraph problem, so we use the tools considered in~\Cref{sec:convert} to do this. First we must introduce the notion of fractional tilings.

Let $r \in \mathbb{N}$ with $r \geq 3$. 
Let $\mathcal{K}_{r,r-2}$ be the set of all digraphs on $r$ vertices with minimum out-degree at least $r-2$ and complete base graph. 
We write an \emph{$(r,r-2)$-tiling} for a $\mathcal{K}_{r,r-2}$-tiling.\footnote{
For a family $\mathcal{H}$ of digraphs, an \emph{$\mathcal{H}$-tiling} is a collection of vertex-disjoint copies of members of~$\mathcal{H}$.}
Let $G$ be a digraph on $n$~vertices.
Let $\mathcal{K}_{r,r-2}(G)$ be the set of all copies of $\mathcal{K}_{r,r-2}$ in~$G$. 
A \emph{fractional $(r,r-2)$-tiling} is a function $\omega^* : \mathcal{K}_{r,r-2}(G) \rightarrow [0,1]$ such that 
\begin{align*}
\sum_{v \in K \in \mathcal{K}_{r,r-2}(G)} \omega^* (K) & \le 1 \textrm{ for all $ v \in V(G)$.}
\end{align*}
We say that $\omega^*$ is \emph{perfect} if equality holds for all $v \in V(G)$. 

We now show that finding an almost rainbow-$K_r$-tiling in an edge-coloured graphs can be reduced to finding a perfect fractional $(r,r-2)$-tiling in digraphs.

\begin{lemma}\label{lem:almost_tiling}
Let $1/n \ll 1/t \ll \eps \ll \gamma, 1/r$ with $r\geq 3$.
Suppose that every digraph~$R$ on~$t$ vertices with $\delta^+(R) \ge \delta t$ contains a perfect fractional $(r,r-2)$-tiling.
Let $G$ be an edge-coloured graph on $n$ vertices with $\delta^c(G)  \ge (\delta +3\gamma)n$. 
Then $G$ contains a rainbow-$K_r$-tiling covering all but at most $3\eps^{1/2} n$ vertices. 
\end{lemma}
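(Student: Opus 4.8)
The plan is to combine the edge-coloured-to-digraph conversion of Proposition~\ref{prop:conversion}, the diregularity lemma (Lemma~\ref{lma:szemeredi}), the reduced-digraph lemma (Lemma~\ref{lma:reducedgraph}), the hypothesis on perfect fractional $(r,r-2)$-tilings, and finally the counting lemma together with Corollary~\ref{cor:conversionK_t} and Lemma~\ref{lma:KMSV} to turn fractional digraph structure into actual rainbow copies of $K_r$. I would introduce a hierarchy $1/n \ll 1/t \ll \eps \ll \eps' \ll d \ll \gamma, 1/r$, where $\eps'$ is the regularity parameter I feed into Lemma~\ref{lma:szemeredi} and $t$ is (an upper bound on) the number $N$ of clusters; one should choose $d$ and $\eps'$ small enough that $(d+\eps')n$ and the various $2\eps'$ losses below are absorbed by the $3\gamma n$ slack in $\delta^c(G)$.

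First I would apply Proposition~\ref{prop:conversion} to $G$ to obtain a digraph $H$ on $V(G)$ with $\delta^+(H) \ge \delta^c(G) - \sqrt n \ge (\delta + 3\gamma)n - \sqrt n \ge (\delta + 2\gamma)n$, whose base graph is a subgraph of $G$, with the rainbow/near-injectivity properties recorded there. Next I apply Lemma~\ref{lma:szemeredi} to $H$ with parameters $\eps', d$ to obtain a spanning subgraph $H'$ and an $(\eps', d, m, k)$-regular partition $\mathcal Q = \{V_0, \dots, V_k\}$ with $1/\eps' \le k \le t$ and $\Delta(H - H') \le (d+\eps')n$; hence $\delta^+(H') \ge \delta^+(H) - (d+\eps')n \ge (\delta + \gamma)n$. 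Then Lemma~\ref{lma:reducedgraph} gives an $(\eps', d)$-reduced digraph $R$ respecting $(H', \mathcal Q)$ with $\delta^+(R) \ge (\delta + \gamma - 2\eps')k \ge \delta k$ (after padding $R$ to have exactly $k \le t$ vertices if necessary, or just noting the hypothesis applies to the number of vertices $R$ has — I would state it for $t$-vertex digraphs by adding isolated-free dummy clusters, but in fact $\delta^+(R)\ge\delta k$ suffices since the hypothesis is about any digraph on $t'$ vertices with $\delta^+ \ge \delta t'$; I'd reconcile this cleanly by noting $k\le t$ and rescaling). By hypothesis, $R$ admits a perfect fractional $(r,r-2)$-tiling $\omega^*$, i.e.\ weights on copies of members of $\mathcal K_{r,r-2}$ in $R$ with total weight $1$ at every cluster.

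The remaining work is to convert $\omega^*$ into an almost perfect rainbow-$K_r$-tiling of $G$. For each copy $K \in \mathcal K_{r,r-2}(R)$ in the support of $\omega^*$, every directed edge of $K$ corresponds to an $\eps'$-regular pair of density $\ge d$ in $H'$; by the counting lemma (Lemma~\ref{lma:counting}), inside the clusters of $K$ one finds a copy of the blow-up $K(t')$ for a suitable $t' \ge r^3$ (with $t'$ chosen in the hierarchy so that $1/m \ll \cdots$). By Corollary~\ref{cor:conversionK_t}, all but a negligible proportion of copies of $K(t')$ in $H$ are properly coloured in $G$, and by Lemma~\ref{lma:KMSV} each properly coloured $K_r(r^3)$ (hence $K(t')$ with $t'\ge r^3$) contains a rainbow $K_r$ in $G$ — note the base graph of each member of $\mathcal K_{r,r-2}$ is complete, so $K(t')$ blown up has complete base graph and Lemma~\ref{lma:KMSV} applies. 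I would then allocate vertices greedily: process clusters, and using the weights $\omega^*(K)$ as a guide, repeatedly pull out vertex-disjoint copies of $K(t')$ (one at a time, each using $\lfloor \omega^*(K) m / \text{(bounded)} \rfloor$-many vertices from each involved cluster in total across the iteration) and extract a rainbow $K_r$ from each; since $\omega^*$ is perfect, the fraction of each cluster left uncovered is $O(\eps')$, plus the $|V_0| \le \eps' n$ vertices and lower-order error terms from the counting lemma's exceptional sets and from Corollary~\ref{cor:conversionK_t}. Summing, the number of uncovered vertices is at most $O(\eps' n) \le 3\eps^{1/2} n$ after the substitution $\eps' \le \eps$ in the hierarchy (the exponent $1/2$ gives comfortable room; one just needs $\eps'$ polynomially smaller than $\eps^{1/2}$, which the hierarchy $\eps' \ll \eps$ provides).

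The main obstacle is the bookkeeping in this last conversion step: one must carefully nest the constants ($m$ must be large relative to $t'$ for the counting lemma, $t'$ large relative to $r$ for Lemma~\ref{lma:KMSV}, the exceptional-set sizes from Lemma~\ref{lma:counting} and the bad-blow-up count from Corollary~\ref{cor:conversionK_t} must both be $o(m)$ per cluster and sum to $o(n)$ overall), and one must argue that greedily removing blow-ups according to the fractional weights leaves only $O(\eps')$ of each cluster unused — this is a standard "fractional tiling to integer almost-tiling" argument, but it does require that the copies $K(t')$ be taken small enough (bounded size, independent of $n$) that rounding errors stay controlled, while still having $t' \ge r^3$. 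I expect no conceptual difficulty beyond assembling these pieces in the right order.
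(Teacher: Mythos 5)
Your proposal follows essentially the same route as the paper: convert via Proposition~\ref{prop:conversion}, apply the diregularity lemma and Lemma~\ref{lma:reducedgraph} to obtain a reduced digraph with large enough minimum out-degree, invoke the hypothesis to get a perfect fractional $(r,r-2)$-tiling, and round it to an integer almost-tiling by greedily extracting rainbow copies of $K_r$ from properly coloured blow-ups via the counting lemma, Corollary~\ref{cor:conversionK_t} and Lemma~\ref{lma:KMSV}. One small glitch: your hierarchy first declares $\eps\ll\eps'$ but your final error bound invokes $\eps'\ll\eps$; the paper avoids this by working with a single regularity parameter $\eps$ and directly bookkeeping the uncovered vertices as roughly $2\eps^{1/2}m$ per cluster plus $|V_0|$.
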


\begin{proof}
By Proposition~\ref{prop:conversion} and Corollary~\ref{cor:conversionK_t}, there exists a digraph~$H$ on~$V(G)$ such that $\delta^+(H) \ge (\delta+2\gamma)n$ and, for each $K \in \mathcal{K}_{r,r-2}$, there are at most $r^4n^{r^4-2}$ many copies of $K(r^3)$ in~$H$ such that $K(r^3)$ is not properly coloured in~$G$. 

Let $ 1/n \ll 1/N \ll 1/t \ll \eps \ll d \ll \gamma$. 
Apply Lemma~\ref{lma:szemeredi} (with $H$ playing the role of~$G$) and obtain a spanning subgraph~$H'$ of~$H$ and an $(\eps, d,m,t)$-regular partition $\mathcal{Q}= \{V_0, V_1,  \dots, V_t\}$ of~$H'$ with $\eps^{-1} \leq t \leq N$ and $\delta^+(H') \ge (\delta+ \gamma)n$.
Apply Lemma~\ref{lma:reducedgraph} (with $H',\eps,\delta + \gamma$ playing the roles of~$G,\eps,\delta^+$) and obtain an $(\eps, d)$-reduced digraph~$R$ respecting~$(H',\mathcal{Q})$ satisfying $\delta^+(R) > \delta t$.
By our assumption, $R$ contains a perfect fractional $(r,r-2)$-tiling~$\omega$.

We now construct a rainbow-$K_r$-tiling~$\mathcal{T}$ in~$G$ covering all but $2 \eps^{1/2} m$ vertices of each~$V_i$ as follows (which implies the lemma). 
Initially, we set $\mathcal{T}$ to be empty. 
Let $ 1/m \ll c \ll \eps$. 
Pick a $K \in \mathcal{K}_{r,r-2}(R)$ that has not yet been considered. 
Without loss of generality, $V(K) = \{ V_i: i \in [r]\}$. 
For each $ i \in [r]$, we pick $U_i \subseteq V_i \setminus V(\mathcal{T})$ of size $((1- 2\eps^{1/2} \omega(K) ) + 2 \eps^{1/2}) m$ (which will be possible by our construction). 
By Proposition~\ref{prop:slice} and Lemma~\ref{lma:counting}, the number of~$K(r^3)$ in~$H'[\bigcup_{i \in [r]}U_i]$ is at least~$cn^{r^4}$.
At least one of them will be properly coloured in~$G$. 
By Lemma~\ref{lma:KMSV}, there is a rainbow~$K_r$ in $G$ with one vertex in each $U_i$. 
Repeat this argument until we have find $(1- 2\eps^{1/2}) \omega(K) m$ vertex-disjoint rainbow~$K_r$ in $G[\bigcup_{i \in [r]}U_i]$.
We repeat this argument for all $K \in \mathcal{K}_{r,r-2}(R)$.
\end{proof}

\subsection{Fractional $(r,r-2)$-tilings and Farkas' lemma.}

We require Farkas' lemma to find a fractional $(r,r-2)$-tiling. 
\begin{lemma}[{Farkas' lemma, see \cite[P.257]{MR0859549}}] \label{lma:Farkas}
A system of equations $y A = b $, $y \ge 0$ is solvable if and only if the system $A x \ge 0 $, $bx <0$ is unsolvable.
\end{lemma}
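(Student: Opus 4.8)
The plan is to treat this as the classical Farkas alternative and prove it by a separating-hyperplane argument, with the only real content hidden in a closedness lemma. Throughout I identify the rows of $A$ with vectors $a_1, \dots, a_m \in \mathbb{R}^n$ and regard $b, x \in \mathbb{R}^n$, so that $yA = \sum_i y_i a_i$, the condition $Ax \ge 0$ reads $\langle a_i, x \rangle \ge 0$ for every $i$, and $bx = \langle b, x \rangle$. The biconditional splits into two implications; one is trivial and one is the heart of the matter.

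The easy implication is that solvability of $yA = b$, $y \ge 0$ forces unsolvability of $Ax \ge 0$, $bx < 0$: if $y \ge 0$ realises $yA = b$ and some $x$ satisfied both remaining conditions, then $0 > bx = (yA)x = \sum_i y_i \langle a_i, x \rangle \ge 0$, a contradiction. For the converse I would argue the contrapositive: assuming $yA = b$, $y \ge 0$ has no solution, I would produce $x$ with $Ax \ge 0$ and $bx < 0$. The object to work with is the finitely generated convex cone $C = \{\, \sum_i y_i a_i : y_i \ge 0 \,\} \subseteq \mathbb{R}^n$, which by hypothesis does not contain $b$.

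The crux is that $C$ is closed. I would prove this via the conical Carath\'eodory principle: given any representation $\sum_i y_i a_i$ with $y \ge 0$ of a point of $C$, choose one of minimal support and, if the vectors $a_i$ in that support are linearly dependent, cancel a dependence scaled so that all coefficients remain nonnegative and at least one becomes zero; this contradicts minimality, so every point of $C$ lies in $C_S := \{\, \sum_{i \in S} y_i a_i : y_i \ge 0 \,\}$ for some $S$ with $\{a_i : i \in S\}$ linearly independent. There are finitely many such $S$, each $C_S$ is the image of a closed orthant under an injective linear map and hence closed, and therefore $C = \bigcup_S C_S$ is closed. With this in hand, the separating hyperplane theorem applied to the point $b$ and the closed convex set $C$ gives $x \in \mathbb{R}^n$ and $\alpha$ with $\langle b, x \rangle < \alpha \le \langle c, x \rangle$ for all $c \in C$. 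Since $0 \in C$ we get $\alpha \le 0$, and since $C$ is a cone we cannot have $\langle c_0, x \rangle < 0$ for any $c_0 \in C$ (else $\langle t c_0, x \rangle \to -\infty$ as $t \to \infty$, contradicting the bound), so $\langle c, x \rangle \ge 0$ for all $c \in C$ while $\langle b, x \rangle < \alpha \le 0$. Taking $c = a_i$ gives $Ax \ge 0$, and $bx = \langle b, x \rangle < 0$, as required.

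The single genuine obstacle is the closedness of $C$: the separating hyperplane theorem really needs it, and this is the one place where finiteness of the system enters; everything else is bookkeeping. If a fully self-contained and elementary proof were preferred, I would instead eliminate the variables $y_1, \dots, y_m$ from $\sum_i y_i a_i = b$, $y \ge 0$ by Fourier--Motzkin elimination: each step preserves solvability, and infeasibility of the original system surfaces as an explicit nonnegative combination of the original constraints witnessing a contradiction, which when read off correctly is exactly a vector $x$ with $Ax \ge 0$ and $bx < 0$. I would present the separating-hyperplane version as the main line, as it is the shortest, and mention the Fourier--Motzkin route only as a remark.
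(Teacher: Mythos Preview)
Your proof is correct and is the standard separating-hyperplane argument for Farkas' lemma; the closedness of the finitely generated cone via conical Carath\'eodory is exactly the right ingredient, and the remainder is routine. However, there is nothing to compare against: the paper does not prove this lemma at all but simply states it as a classical result with a citation to the literature (Schrijver's book), so the paper's ``own proof'' is absent by design. Your write-up is therefore more than the paper supplies, and would be appropriate as supplementary material rather than as a replacement for anything in the text.
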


A $k$-uniform hypergraph~$\mathcal{F}$ or $k$-graph for short is a pair $(V(\mathcal{F}),E(\mathcal{F}))$ such that $E(\mathcal{F}) \subseteq \binom{V(\mathcal{F})}{k}$.
A \emph{fractional matching} is a function $\omega^* : E(\mathcal{F}) \rightarrow [0,1]$ such that 
\begin{align*}
\sum_{v \in e \in E(\mathcal{F})} \omega^* (e) & \le 1 \textrm{ for all $ v \in V(\mathcal{F})$.}
\end{align*}
We say that $\omega^*$ is \emph{perfect} if equality holds for all $v \in V(\mathcal{F})$.

\begin{corollary} \label{cor:fractionaltiling}
Let $\mathcal{F}$ be a $k$-graph on $n$ vertices that does not contains a perfect fractional matching. 
Then there exists a vertex weighting $\omega : V(\mathcal{F}) \rightarrow [0,1]$ such that 
\begin{align}
	\sum_{v \in e} \omega(v) &\ge 1 \textrm{ for all $ e \in E(\mathcal{F})$,} &
	\min_{v \in V(\mathcal{F})} \omega(v) &= 0, &
	\sum_{v \in V(\mathcal{F})} \omega(v) &< n/k. \nonumber
\end{align}
\end{corollary}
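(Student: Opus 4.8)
The plan is to apply Farkas' lemma (Lemma~\ref{lma:Farkas}) to the linear system that encodes perfect fractional matchings. Let $\mathcal{F}$ have vertex set $V(\mathcal{F}) = \{v_1,\dots,v_n\}$ and edge set $E(\mathcal{F}) = \{e_1,\dots,e_N\}$. Form the incidence matrix $A \in \{0,1\}^{N \times n}$ with $A_{ij} = 1$ iff $v_j \in e_i$, so that a nonnegative vector $y = (y_1,\dots,y_N) \ge 0$ satisfies $yA = \mathbf{1}$ (the all-ones row vector of length $n$) precisely when $\omega^*(e_i) := y_i$ is a perfect fractional matching; note the constraint $y_i \le 1$ comes for free once $yA = \mathbf{1}$ holds, since each $v_j$ lies in some edge. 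By hypothesis no such $y$ exists, so by Farkas' lemma with $b = \mathbf{1}$ there is a vector $x = (x_1,\dots,x_n) \in \mathbb{R}^n$ with $Ax \ge 0$ and $\mathbf{1}\cdot x < 0$; that is, $\sum_{v_j \in e_i} x_j \ge 0$ for every edge $e_i$ and $\sum_j x_j < 0$.

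The second step is to normalise and shift $x$ into the desired weighting $\omega$. Since $\sum_j x_j < 0$, the vector $x$ is not identically zero, and some coordinate is negative; let $\mu := -\min_j x_j > 0$ (it is positive because if all $x_j \ge 0$ then $\sum_j x_j \ge 0$, a contradiction). Also every edge is nonempty, so from $\sum_{v_j \in e_i} x_j \ge 0$ we get that each edge contains a coordinate that is $\ge 0$, hence $\max_j x_j \ge 0$; together with $\sum_j x_j<0$ this forces $\sum_j x_j \ge -|V(\mathcal F)|\cdot\text{(something)}$, but more to the point we simply set
\begin{align*}
\omega(v_j) := \frac{x_j + \mu}{S}, \qquad \text{where } S := \sum_{j=1}^n (x_j + \mu) = n\mu + \sum_{j=1}^n x_j.
\end{align*}
We have $S > 0$: indeed $x_j + \mu \ge 0$ for all $j$ by choice of $\mu$, and $x_j + \mu = 0$ for the minimising coordinate but not for all $j$ (else $x$ is constant, forcing $\sum x_j = n x_1 \ge 0$ since then $x_1 = -\mu$ would give $\sum_{v\in e_i}x_j = |e_i| x_1 = -|e_i|\mu < 0$, contradicting $Ax\ge 0$ when $k\ge 1$); hence $S>0$. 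Now $\omega(v_j) \ge 0$ for all $j$, and $\min_j \omega(v_j) = 0$ because the minimising coordinate of $x$ gives $x_j + \mu = 0$.

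The third step checks the two remaining inequalities. For any $e_i \in E(\mathcal{F})$, since $|e_i| = k$,
\begin{align*}
\sum_{v_j \in e_i} \omega(v_j) = \frac{1}{S}\left( \sum_{v_j \in e_i} x_j + k\mu \right) \ge \frac{k\mu}{S},
\end{align*}
using $\sum_{v_j \in e_i} x_j \ge 0$. So I want $S \le k\mu$, equivalently $n\mu + \sum_j x_j \le k\mu$, i.e.\ $\sum_j x_j \le -(n-k)\mu$. This is \emph{not} automatic from Farkas as stated, so the remaining task — and the one genuine obstacle — is to first rescale $x$ so that this holds. The fix: $Ax \ge 0$ and $\mathbf{1}\cdot x < 0$ are both preserved under multiplying $x$ by any positive scalar, and $\mu$ scales the same way, so the ratio $\bigl(\sum_j x_j\bigr)/\mu$ is scale-invariant; thus I cannot fix it by rescaling alone. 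Instead I should bound it directly: we need $\sum_{v_j\in e}\omega(v_j)\ge 1$, i.e.\ I should \emph{choose} the normalisation constant to be exactly $k\mu$ rather than $S$. Define instead $\omega(v_j) := (x_j+\mu)/(k\mu)$. Then $\min_j\omega(v_j)=0$ as before, $\sum_{v_j\in e_i}\omega(v_j) = \frac{1}{k\mu}(\sum_{v_j\in e_i}x_j + k\mu)\ge 1$, and finally
\begin{align*}
\sum_{j=1}^n \omega(v_j) = \frac{1}{k\mu}\left(\sum_{j=1}^n x_j + n\mu\right) < \frac{n\mu}{k\mu} = \frac{n}{k},
\end{align*}
using $\sum_j x_j < 0$. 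It remains only to note $\omega(v_j)\le 1$ if one wants $\omega: V(\mathcal F)\to[0,1]$: this may require an additional truncation, but truncating weights at $1$ only \emph{decreases} $\sum_v\omega(v)$ and a vertex whose weight was truncated has $\omega(v)=1$, so every edge through it still has weight-sum $\ge 1$; thus replacing $\omega$ by $\min(\omega,1)$ coordinatewise preserves all three properties (with $\min_v\omega(v)=0$ still attained at the same vertex, provided $k\ge 2$ so not every vertex is truncated — and if $k=1$ the statement is degenerate anyway). This completes the argument; the only subtlety worth care in the writeup is the normalisation choice ($k\mu$, not $S$) and the final truncation step.
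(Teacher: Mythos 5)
Your proof is correct and takes essentially the same approach as the paper: apply Farkas' lemma to the incidence matrix, normalize by the magnitude of the most negative coordinate $\mu$, apply the affine map $t \mapsto (t+1)/k$ (your formula $(x_j+\mu)/(k\mu)$ is exactly this after rescaling $x$ so $\mu = 1$), and truncate to land in $[0,1]$. The only cosmetic difference is that the paper truncates at $k-1$ \emph{before} applying the affine map whereas you truncate at $1$ \emph{after}; these are the same operation, and in both cases the edge inequality survives because every other weight is bounded below by the minimum.
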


\begin{proof}
Let $A$ be the adjacency matrix of~$\mathcal{F}$ with rows and columns representing~$E(\mathcal{F})$ and~$V(\mathcal{F})$ respectively, such that $A_{e,v} =1$ if and only if $v \in e$ for all~$e \in E(\mathcal{F})$ and all~$v \in V(\mathcal{F})$.
Since $\mathcal{F}$ has no perfect fractional matching, taking $\textbf{y} = ( \omega_0(e): e \in E(\mathcal{F}))$ and $\textbf{b}=(1,\dots,1)$, Farkas' lemma (Lemma~\ref{lma:Farkas}) implies that there exists a weighting function $\omega: V(\mathcal{F}) \rightarrow \mathbb{R}$ such that 
\begin{align}
	  \sum_{v \in e} \omega(v) \ge 0 \text{ for all $ e \in E(\mathcal{F})$ and } \sum_{v \in V(\mathcal{F})} \omega(v) <0. \label{eqn:Farkas}
\end{align}
Let $V(\mathcal{F}) = \{v_i : i \in [n] \}$ such that $(\omega (v_i))_{i \in [n]}$ is a decreasing sequence. 
Since $\sum_{v \in V(\mathcal{F})} \omega(v)<0$, then $\omega(v_n)<0$.
Thus, by multiplying through by $|\omega(v_n)|^{-1}$, we may assume that~$\omega(v_{n}) =-1$.
We further assume that $\omega(v) \le k-1$ for all $v \in V(\mathcal{F})$, as \eqref{eqn:Farkas} still holds after we replace $\omega(v)$ with $\min \{ \omega(v), k-1 \}$.
Finally, we apply the linear transformation $(\omega(v)+1)/k$ for $v \in V(\mathcal{F})$, which scales~$\omega$ so that its image now lies in the interval~$[0,1]$ and $\omega$ satisfies the following inequalities
\begin{align}
	\sum_{v \in e} \omega(v) \ge 1 \textrm{ for all $ e \in E(\mathcal{F})$ and } \sum_{v \in V(\mathcal{F})} \omega(v) < n/k \nonumber
\end{align}
as required. 
\end{proof}

\subsection{Perfect fractional $(r,r-2)$-tilings for $r \ge 4$.}


Suppose that $G$ is digraph without a perfect fractional $(r,r-2)$-tiling.
Let $\mathcal{F}= \mathcal{F}(G)$ be the $r$-graph on $V(G)$ such that every edge in~$\mathcal{F}$ is a member of $\mathcal{K}_{r,r-2}(G)$. There is no perfect fractional matching in $\mathcal{F}$, since this corresponds to a perfect fractional $(r,r-2)$-tiling in $G$. By Corollary~\ref{cor:fractionaltiling}, there exists a vertex weighting~$\omega$. 
Let $v_0 \in V(G)$ be such that $\omega(v_0) = 0$. 
To obtain a contradiction, a typical approach is to show that 
$G^{\pm}[N^+(v_0)]$ contains a $K_{r-1}$-tiling of size $n/r$, since each copy in this tiling together with $v_0$ form an element of $\mathcal{K}_{r,r-2}(G)$.

\begin{lemma} \label{lma:fractionalrainbowK_4r}
Let $1/n \ll \gamma,1/r \le 1/5$. 
Let $G$ be a digraph on $n$ vertices with $\delta^+(G) \ge ( 1 - \frac{1}{(2r-3)r}+ \gamma)n$.
Then $G$ contains a perfect fractional $(r,r-2)$-tiling.
\end{lemma}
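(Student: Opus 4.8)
The plan is to follow the approach outlined just before the statement: assume for contradiction that $G$ has no perfect fractional $(r,r-2)$-tiling, pass to the $r$-graph $\mathcal{F} = \mathcal{F}(G)$ whose edges are the vertex sets of members of $\mathcal{K}_{r,r-2}(G)$, and apply Corollary~\ref{cor:fractionaltiling} to get a vertex weighting $\omega : V(G) \to [0,1]$ with $\sum_{v\in e}\omega(v)\ge 1$ for every edge $e$, $\min_v \omega(v)=0$, and $\sum_v \omega(v) < n/r$. Fix $v_0$ with $\omega(v_0)=0$ and set $A = N^+(v_0)$, so $|A| \ge (1-\frac{1}{(2r-3)r}+\gamma)n$. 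The goal is to derive a contradiction by showing that $G^\pm[A]$ admits a $K_{r-1}$-tiling of size $n/r$ (or, more precisely, of a size large enough to be impossible): each $K_{r-1}$ in $G^\pm[A]$ together with $v_0$ forms a digraph on $r$ vertices with complete base graph in which $v_0$ has out-degree $r-1\ge r-2$ and every other vertex has out-degree at least $r-2$, hence a member of $\mathcal{K}_{r,r-2}(G)$, i.e.\ an edge of $\mathcal{F}$.

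The key mechanism turning the weighting into a contradiction is that every edge $e \ni v_0$ of $\mathcal{F}$ satisfies $\sum_{v\in e}\omega(v)\ge 1$, and since $\omega(v_0)=0$ this forces $\sum_{v \in e\setminus v_0}\omega(v)\ge 1$ for every $K_{r-1}$ in $G^\pm[A]$. So if $G^\pm[A]$ had a $K_{r-1}$-tiling $\mathcal{M}$ covering a set $S\subseteq A$ with $|S|=(r-1)|\mathcal{M}|$, then $\sum_{v\in S}\omega(v)\ge |\mathcal{M}|$, whereas the rest of $V(G)$ contributes at least $0$ each but we also know $\omega(v)\le 1$ everywhere; combining $\sum_v\omega(v)<n/r$ with $\sum_{v\in S}\omega(v)\ge |\mathcal{M}|$ and $\sum_{v\in V(G)\setminus S}\omega(v) \le n - (r-1)|\mathcal{M}|$ gives $|\mathcal{M}| + n - (r-1)|\mathcal{M}| > \sum_v \omega(v)$ only in one direction — I need the inequality the other way, so instead I will aim to cover enough of $A$ to force $\sum_{v\in A}\omega(v)$ too large. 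A cleaner route: show $G^\pm[A]$ has a $K_{r-1}$-tiling covering all but few vertices of $A$; then since each covered $(r-1)$-set has $\omega$-sum $\ge 1$, $\sum_{v\in A}\omega(v) \ge |A|/(r-1) - o(n)$, and $\sum_{v\notin A}\omega(v)\le n-|A|$, so $n/r > \sum_v \omega(v) \ge |A|/(r-1) + (\text{lower-order})$, which with $|A|\ge (1-\frac{1}{(2r-3)r})n$ yields $n/r > \frac{1-1/((2r-3)r)}{r-1}n$, i.e.\ $\frac{r-1}{r} > 1 - \frac{1}{(2r-3)r}$, i.e.\ $\frac{1}{(2r-3)r} > \frac1r$, which is false for $r\ge 3$ — giving the contradiction. (I should double-check the arithmetic; the $\gamma$ and the $o(n)$ error terms need to be absorbed, which is where the strict inequality $\sum_v\omega(v)<n/r$ and the slack $\gamma n$ are used.)

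So the technical heart reduces to: \emph{$G^\pm[A]$ contains a $K_{r-1}$-tiling covering all but $o(n)$ vertices}. Since $|A|\ge (1-\frac{1}{(2r-3)r}+\gamma)n$ and $\delta^+(G)\ge (1-\frac{1}{(2r-3)r}+\gamma)n$, every vertex $u\in A$ has $|N^+(u)\cap A| \ge |A| - \frac{1}{(2r-3)r}n$, and symmetrically for in-neighbourhoods, so in $G^\pm[A]$ every vertex has degree at least $|A| - \frac{2}{(2r-3)r}n \ge |A| - \frac{2}{(2r-3)(r-1)}|A|$ (using $|A|\ge n/2$ roughly, with room to spare). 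A short computation shows $\frac{2}{(2r-3)(r-1)} \le \frac{1}{r-2}$ for $r\ge 5$ — indeed this is equivalent to $2(r-2)\le (2r-3)(r-1)$, which holds comfortably — so $G^\pm[A]$ has minimum degree at least $(1-\frac{1}{r-2}+\gamma')|A|$ for some $\gamma'>0$. By the Hajnal--Szemer\'edi theorem (for the undirected graph $G^\pm[A]$, after deleting a tiny set to make $|A|$ divisible by $r-1$), such a graph has a perfect $K_{r-1}$-tiling, or at least one covering all but $O(r)$ vertices. That is the step I expect to require the most care: verifying the minimum-degree inequality in $G^\pm[A]$ precisely (tracking that $|A|$ may be slightly below $n$, so the fraction $\frac{1}{(2r-3)r}n$ must be rewritten in terms of $|A|$), handling the divisibility of $|A|$ by $r-1$ by discarding $O(r)$ vertices, and then confirming the closing arithmetic inequality $\frac1r < \frac{1-1/((2r-3)r)}{r-1}$ is strict with the $\gamma$-slack covering all accumulated errors. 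The choice of the exact threshold $1 - \frac{1}{(2r-3)r}$ is presumably engineered precisely so that this chain of inequalities is tight, so the main obstacle is bookkeeping rather than any new idea; I would organise it by first proving the min-degree bound for $G^\pm[A]$ as a displayed inequality, then invoking Hajnal--Szemer\'edi, then the weighting contradiction as a final short computation.
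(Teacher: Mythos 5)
Your setup (Farkas/Corollary~\ref{cor:fractionaltiling}, choosing $v_0$ with $\omega(v_0)=0$, working inside $A=N^+(v_0)$, turning a large $K_{r-1}$-tiling of $G^\pm[A]$ into a contradiction with $\sum_v \omega(v) < n/r$) matches the paper, and the closing arithmetic $\frac{1}{r} < \frac{1-\frac{1}{(2r-3)r}}{r-1}$ for $r\ge 3$ is correct. But the technical heart of your proof has a genuine gap.

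You claim that every $u\in A$ satisfies $|N^+(u)\cap A|\ge |A|-\frac{1}{(2r-3)r}n$ \emph{``and symmetrically for in-neighbourhoods,''} and thus that $G^\pm[A]$ has minimum degree at least $|A|-\frac{2}{(2r-3)r}n$, which you then feed into the Hajnal--Szemer\'edi theorem. The hypothesis only bounds $\delta^+(G)$, not $\delta^-(G)$: individual vertices of $A$ can have arbitrarily small in-degree (even in-degree $0$), so $d_{G^\pm[A]}(u)=|N^+(u)\cap N^-(u)\cap A|$ has no useful lower bound. There is no symmetry to invoke. A double-counting shows the \emph{average} in-degree within $A$ is large, but that does not rescue Hajnal--Szemer\'edi, which needs a pointwise minimum-degree condition; and one cannot simply discard the bad vertices, since the same double count only bounds the number of low in-degree vertices by roughly $|A|/2$, far too many for the required tiling size of $n/r$.

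The paper sidesteps the min-degree problem entirely. It takes a \emph{maximal} $K_{r-1}$-tiling $\mathcal{T}$ in $G^\pm[N^+(v_0)]$; if $|\mathcal{T}|\ge n/r$ the weighting contradiction is immediate, and otherwise the uncovered set $U=N^+(v_0)\setminus V(\bigcup\mathcal{T})$ is $K_{r-1}$-free by maximality yet has
\[
e(G^\pm[U])\ \ge\ \binom{|U|}{2}-(1-\delta)n|U|\ >\ \Bigl(1-\tfrac{1}{r-2}\Bigr)\binom{|U|}{2},
\]
contradicting Tur\'an's theorem. The crucial point is that the \emph{edge count} of $G^\pm[U]$ can be bounded below using the out-degree hypothesis alone (each $u\in U$ has $|N^+(u)\cap U|\ge |U|-(1-\delta)n$, so at most $(1-\delta)n|U|$ pairs in $U$ fail to be $G^\pm$-edges), whereas a min-degree bound cannot. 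If you want to repair your write-up, replace the Hajnal--Szemer\'edi step with this maximality-plus-Tur\'an argument.
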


\begin{proof}
Let $\delta = 1 - ((2r-3)r)^{-1}+ \gamma$.
Suppose to the contrary.
Let $\mathcal{F}$ be the $r$-graph on $V(G)$ such every $S \subseteq \binom{V(G)}{r}$ is an edge in~$\mathcal{F}$ if and only if $S \in \mathcal{K}_{r,r-2}(G)$.
By Corollary~\ref{cor:fractionaltiling}, there exists a vertex weighting $\omega : V(\mathcal{F}) \rightarrow [0,1]$ such that 
\begin{align}
	\sum_{v \in  K} \omega(v) &\ge 1 \textrm{ for all $ K \in \mathcal{K}_{r,r-2}(G)$,} &
	\min_{v \in V(G)} \omega(v) &= 0, &
	\sum_{v \in V(G)} \omega(v) &< n/r. \nonumber
\end{align}
Let $v_0 \in V(G)$ be such that $\omega(v_0) = 0$.
Let $\mathcal{T}$ be a $K_{r-1}$-tiling in $G^{\pm}[ N^+(v_0) ]$ with $| \mathcal{T} |$ maximal. 

Suppose that $|\mathcal{T}| \ge n/r$.
Each~$K \in\mathcal{T}$ together with $v_0$ is a member of~$\mathcal{K}_{r,r-2}(G)$ and so $\sum_{v \in K} \omega(v) \ge 1$ for all $K \in\mathcal{T}$.
Hence we have 
\begin{align*}
n/r & > \sum_{v \in V(G)} \omega(v) \ge  \sum_{K \in \mathcal{T}} \sum_{v \in K} \omega(v)  \ge |\mathcal{T}| \ge n/r,
\end{align*}
a contradiction. 

Hence we may assume that $|\mathcal{T}| < n/r$. 
Let $U = N^+(v_0) \setminus V( \bigcup\mathcal{T}) $, so
\begin{align*}
	|U| \ge \delta n - \frac{(r-1)n}{r} \ge \frac{2(r-2)n}{r(2r-3)}.
\end{align*}
Note that $G^{\pm}[U]$ is $K_{r-1}$-free. 
However, 
\begin{align*}
	e(G^{\pm}[U]) \ge  \binom{|U|}{2} - (1- \delta) n |U| 
	\ge \binom{|U|}{2} - \frac{(1- \gamma) n |U|}{r(2r-3)} 
	> \left( 1 - \frac{1}{r-2}  \right)\binom{|U|}2
\end{align*}
contradicting Tur\'an's theorem.
\end{proof}

For the case $r = 4$, we use the following density version of the Corr\'adi--Hajnal theorem of Allen, B\"ottcher, Hladk\'y and Piguet~\cite{ABHP2015} to find a large $K_3$-tiling in $G^\pm[N^+(v_0)]$. 
We will only state a simplified version of their result.

\begin{theorem}[Allen, B\"ottcher, Hladk\'y and Piguet~\cite{ABHP2015}] \label{thm:ABHP}
Let $1/n \ll \gamma, \alpha$. 
Let $G$ be a graph on $n$ vertices with 
\begin{align*}
e(G) \ge \max \left\{ \frac{1}4 (1+2\alpha-\alpha^2) , \left( \frac14+2\alpha^2\right), 2 \alpha (1-\alpha), \frac12-3\alpha +9\alpha^2 \right\} n^2+ \gamma n^2.
\end{align*}
Then $G$ contains $\alpha n$ vertex-disjoint~$K_3$.  
\end{theorem}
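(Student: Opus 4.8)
The bound is the upper envelope of four quadratics, and (in the range where it is sharp) each is matched by an extremal graph built from a clique $D$ joined completely to all or part of a triangle-free complete multipartite graph on the remaining vertices, with $|D|$ slightly below $\alpha n$, $2\alpha n$, or $3\alpha n$ according as every triangle is forced to meet $D$ in at least one, two, or three vertices; such a graph has no packing of $\alpha n$ vertex-disjoint triangles, and optimising the edge count over these options produces the four terms. So the content is the matching upper bound, which I would prove by contradiction from a maximum packing. Let $\mathcal{M}$ be a maximum collection of vertex-disjoint triangles in $G$, suppose $|\mathcal{M}|=m<\alpha n$, and put $L=V(G)\setminus V(\mathcal{M})$, so $|L|=n-3m>(1-3\alpha)n$. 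When $\alpha$ is smaller than a suitable absolute constant one can finish at once: delete a triangle (losing fewer than $3n$ edges), apply Mantel's theorem to what remains, and repeat; this succeeds provided $e(G)-3in>\tfrac14(n-3i)^2$ for every $i<\alpha n$, i.e.\ provided $e(G)>\bigl(3\alpha+\tfrac14(1-3\alpha)^2\bigr)n^2$, and for small $\alpha$ this is weaker than the hypothesis. Hence one may assume $\alpha$ is bounded away from $0$.

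Next I would exploit the maximality of $\mathcal{M}$ to extract structure. Certainly $G[L]$ is triangle-free, so $e(G[L])\le |L|^2/4$. More importantly, no bounded sub-configuration consisting of at most two triangles of $\mathcal{M}$ together with at most six vertices of $L$ can carry more triangles than $\mathcal{M}$ already uses on those vertices, since such a configuration would yield a larger packing. Cataloguing the (finitely many) forbidden augmenting configurations yields, for each $T\in\mathcal{M}$, a bound on the bipartite graph $G[T,L]$ --- the number of vertices of $L$ with two or three neighbours in $T$ is controlled unless $G[L]$ is essentially edgeless near them --- and, for each pair $T,T'\in\mathcal{M}$, a bound on $G[T,T']$.

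Summing these local bounds together with $e(G[L])\le |L|^2/4$ and $e(G[V(\mathcal{M})])\le\binom{3m}{2}$, I would bound $e(G)$ by an explicit function of $m$ and of the relevant local structural parameters, and then maximise this function over all admissible choices of those parameters. The optimum passes through several regimes of the local structure, and in each the value is exactly one of the four quadratics; so the maximum is $\max\{\ldots\}\,n^2+o(n^2)$, and the $\gamma n^2$ slack in the hypothesis absorbs the error, contradicting $e(G)\ge\max\{\ldots\}\,n^2+\gamma n^2$. This same optimisation can be packaged as a linear program on a weighted ``reduced'' picture and solved by LP duality, much as in Corollary~\ref{cor:fractionaltiling}; applying the regularity lemma first makes the error terms transparent at the cost of the sharp constants, which suffices here since the statement already carries a $\gamma n^2$ term.

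The main obstacle is the optimisation and the case analysis that feeds it: one must identify exactly the right list of forbidden augmenting configurations so that the per-triangle and per-pair edge bounds come out in sharp form, and then verify that the resulting multi-parameter maximum really is the upper envelope of the four quadratics rather than something larger. The most delicate point is the near-extremal regime, where $m$ is only just below $\alpha n$: there the crude counting is barely insufficient, $G$ is forced to lie close to one of the extremal configurations, and a separate stability argument is needed that uses the $\gamma n^2$ surplus of edges to manufacture the one extra augmenting triangle.
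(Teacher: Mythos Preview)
This theorem is not proved in the paper: it is quoted from Allen, B\"ottcher, Hladk\'y and Piguet~\cite{ABHP2015} and used as a black box in the proof of Lemma~\ref{lma:fractionalrainbowK_4}. There is therefore no ``paper's own proof'' to compare your proposal against. What you have written is a plausible high-level outline of the strategy in~\cite{ABHP2015} (extremal constructions, maximum packing plus augmenting configurations, an optimisation over structural parameters, with regularity to absorb the $\gamma n^2$ slack), but for the purposes of the present paper no proof is required here.
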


\begin{lemma} \label{lma:fractionalrainbowK_4}
Let $1/n \ll \gamma$.
Let $G$ be a digraph on $n$ vertices with $\delta^+(G) \ge (2+\sqrt{13/2}+ \gamma)n/5$.
Then $G$ contains a perfect fractional $(4,2)$-tiling.
\end{lemma}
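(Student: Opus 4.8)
The plan is to follow the scheme of the proof of Lemma~\ref{lma:fractionalrainbowK_4r}, but to replace its closing appeal to Tur\'an's theorem by the density Corr\'adi--Hajnal theorem (Theorem~\ref{thm:ABHP}). Suppose for a contradiction that $G$ has no perfect fractional $(4,2)$-tiling, let $\mathcal{F}$ be the $4$-graph on $V(G)$ whose edges are the members of $\mathcal{K}_{4,2}(G)$, and note that a perfect fractional matching of $\mathcal{F}$ would be a perfect fractional $(4,2)$-tiling of $G$. By Corollary~\ref{cor:fractionaltiling} there is $\omega\colon V(G)\to[0,1]$ with $\sum_{v\in K}\omega(v)\ge 1$ for all $K\in\mathcal{K}_{4,2}(G)$, with $\min_v\omega(v)=0$ and with $\sum_{v\in V(G)}\omega(v)<n/4$. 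Fix $v_0$ with $\omega(v_0)=0$ and put $W=N^+(v_0)$, $N=|W|\ge\delta n$ with $\delta=(2+\sqrt{13/2}+\gamma)/5$. As observed before Lemma~\ref{lma:fractionalrainbowK_4r}, any triangle $T$ of $G^{\pm}[W]$ gives $T\cup\{v_0\}\in\mathcal{K}_{4,2}(G)$, so $\sum_{v\in T}\omega(v)\ge 1$ (using $\omega(v_0)=0$); hence $m$ pairwise disjoint triangles in $G^{\pm}[W]$ force $m\le\sum_{v\in V(G)}\omega(v)<n/4$. It therefore suffices to produce at least $n/4$ pairwise disjoint triangles in $G^{\pm}[W]$.

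For each $u\in W$ we have $|N^+_G(u)\cap W|\ge\delta n-(n-N)$, so fewer than $(1-\delta)n$ vertices of $W\setminus\{u\}$ are non-out-neighbours of $u$; since every non-edge of the graph $G^{\pm}[W]$ witnesses at least one missing arc, $G^{\pm}[W]$ has fewer than $N(1-\delta)n$ non-edges, and so $e(G^{\pm}[W])\ge\binom{N}{2}-N(1-\delta)n$. Now apply Theorem~\ref{thm:ABHP} to $G^{\pm}[W]$ with $\alpha$ chosen so that $\alpha N\ge n/4+1$; writing $N=\beta n$ with $\beta\in[\delta,1]$, this forces $\alpha=\tfrac1{4\beta}+O(1/n)$, so $\alpha\in[\tfrac14,\tfrac1{4\delta}]\subseteq(0,\tfrac13)$ (and $1/n\ll\gamma'\ll\alpha$ for the constant $\gamma'$ of Theorem~\ref{thm:ABHP}, taken to be a small multiple of $\gamma$). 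On this window the maximum in Theorem~\ref{thm:ABHP} equals $\tfrac14+2\alpha^2$: its difference with $2\alpha(1-\alpha)$ is $(2\alpha-\tfrac12)^2\ge 0$, with $\tfrac14(1+2\alpha-\alpha^2)$ is $\tfrac{\alpha}{4}(9\alpha-2)\ge 0$ (as $\alpha\ge\tfrac14>\tfrac29$), and with $\tfrac12-3\alpha+9\alpha^2$ is $-7\alpha^2+3\alpha-\tfrac14$, which is positive on $\bigl(\tfrac{3-\sqrt2}{14},\tfrac{3+\sqrt2}{14}\bigr)\ni\alpha$. So it remains to check $\binom{N}{2}-N(1-\delta)n\ge(\tfrac14+2\alpha^2+\gamma')N^2$. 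Dividing by $N^2=\beta^2n^2$ and substituting $\alpha=\tfrac1{4\beta}$, this reduces, up to $o(1)$ terms, to $2\beta^2-8(1-\delta)\beta-1\ge 8\gamma'\beta^2$. The left side is an upward parabola with vertex near $2(1-\delta)<\delta$, hence increasing on $[\delta,1]$, so it is minimised at $\beta=\delta$, where it equals $10\delta^2-8\delta-1$. Since $(2+\sqrt{13/2})/5=(4+\sqrt{26})/10$ is exactly the larger root of $10x^2-8x-1$, substituting $\delta=(2+\sqrt{13/2})/5+\gamma/5$ gives $10\delta^2-8\delta-1=\tfrac{4\sqrt{13/2}}{5}\gamma+\tfrac25\gamma^2>0$, which beats $8\gamma'\delta^2$ once $\gamma'$ is a sufficiently small multiple of $\gamma$. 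Thus Theorem~\ref{thm:ABHP} yields at least $\alpha N\ge n/4+1$ pairwise disjoint triangles in $G^{\pm}[W]$, a contradiction.

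The only substantive part is the final estimate: deciding which of the four terms in Theorem~\ref{thm:ABHP} controls the window $\alpha\in[\tfrac14,\tfrac1{4\delta}]$, and then checking that the resulting edge inequality --- once optimised over the admissible sizes $N\in[\delta n,n]$ of $N^+(v_0)$ --- becomes tight precisely when $\delta$ equals the larger root of $10x^2-8x-1$, i.e.\ $\delta=(2+\sqrt{13/2})/5$; this is the source of the constant appearing in Theorem~\ref{thm:main} for $r=4$. The reduction through Corollary~\ref{cor:fractionaltiling}, the non-edge count, and the $O(1/n)$ rounding of $\alpha$ are all routine.
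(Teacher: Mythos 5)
Your proposal is correct and follows essentially the same strategy as the paper: Farkas via Corollary~\ref{cor:fractionaltiling}, a zero-weight vertex $v_0$, a lower bound on $e(G^{\pm}[N^+(v_0)])$, and Theorem~\ref{thm:ABHP} to extract $n/4$ disjoint triangles. The one small difference is that the paper simply fixes a subset $U\subseteq N^+(v_0)$ of size exactly $\delta n$, so the edge count $e(G^{\pm}[U])\ge\tfrac12\delta(3\delta-2)n^2$ and the parameter $\alpha=1/(4\delta)$ are forced and no optimisation is needed; you instead keep the full out-neighbourhood of size $\beta n$ and then check that the resulting inequality $(2-8\gamma')\beta^2-8(1-\delta)\beta-1\ge0$ is tightest at $\beta=\delta$, which reduces to the same bound $10\delta^2-8\delta-1\ge8\gamma'\delta^2$. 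Both routes give the same threshold; the paper's truncation is slightly more economical, while your version is more explicit about why the constant $(2+\sqrt{13/2})/5$ is the root of $10x^2-8x-1$ and about which of the four terms in Theorem~\ref{thm:ABHP} governs the relevant window of $\alpha$, details the paper leaves implicit with ``by our choice of $\delta$''.
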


\begin{proof}
Let $\delta = (2+\sqrt{13/2}+ \gamma)/5$.
Suppose to the contrary.
Let $\mathcal{F}$ be $4$-graph on $V(G)$ such that every $S \subseteq \binom{V(G)}{4}$ is an edge in~$\mathcal{F}$ if and only if $S \in \mathcal{K}_{4,2}(G)$.
By Corollary~\ref{cor:fractionaltiling}, there exists a vertex weighting $\omega : V(\mathcal{F}) \rightarrow [0,1]$ such that 
\begin{align}
	\sum_{v \in  K} \omega(v) &\ge 1 \textrm{ for all $ K \in \mathcal{K}_{4,2}(G)$,} &
	\min_{v \in V(G)} \omega(v) &= 0, &
	\sum_{v \in V(G)} \omega(v) &< n/4. \nonumber
\end{align}
Let $v_0 \in V(G)$ be such that $\omega(v_0) = 0$.

Let $U \subseteq N^+(v_0)$ of size $\delta n$.
Note that $\delta^+(G[U]) \ge (2\delta -1)n$ and so
\begin{align*}
	e(G^{\pm}[U]) \ge (2\delta -1) \delta n^2 - \binom{\delta n }{2} = \frac{1}{2}\delta (3 \delta-2) n^2. 
\end{align*}
By our choice of $\delta$ and Theorem~\ref{thm:ABHP}, $G^{\pm}[U]$ contains a $K_3$-tiling~$\mathcal{T}$ of size $n/4$.
Each~$K \in\mathcal{T}$ together with $v_0$ is a member of~$\mathcal{K}_{4,2}(G)$ and so $\sum_{v \in K} \omega(v) \ge 1$ for all $K \in\mathcal{T}$.
Hence we have 
\begin{align*}
n/4 & > \sum_{v \in V(G)} \omega(v) \ge  \sum_{K \in \mathcal{T}} \sum_{v \in K} \omega(v) \ge n/4,
\end{align*}
a contradiction. 
\end{proof}

\subsection{Perfect fractional $(3,1)$-tilings}

When $r =3$, we prove the following lemma. 

\begin{lemma} \label{lma:fractionalrainbowK_3}
Let $1/n \ll  1$.
Let $G$ be a digraph on $n$ vertices with $\delta^+(G) \ge 5n/6$.
Then $G$ contains a perfect fractional $(3,1)$-tiling.
\end{lemma}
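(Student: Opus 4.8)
The plan is to mimic the strategy used in Lemmas~\ref{lma:fractionalrainbowK_4r} and~\ref{lma:fractionalrainbowK_4}, so I argue by contradiction: assume $G$ has no perfect fractional $(3,1)$-tiling, form the $3$-graph $\mathcal{F}$ whose edges are the members of $\mathcal{K}_{3,1}(G)$, and invoke Corollary~\ref{cor:fractionaltiling} to obtain a weighting $\omega:V(G)\to[0,1]$ with $\sum_{v\in K}\omega(v)\ge 1$ for every $K\in\mathcal{K}_{3,1}(G)$, with $\min_v\omega(v)=0$, and with $\sum_v\omega(v)<n/3$. Fix $v_0$ with $\omega(v_0)=0$. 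The key observation is that if $\{x,y\}$ is any edge of $G^{\pm}[N^+(v_0)]$, then the digraph on $\{v_0,x,y\}$ has every vertex of out-degree at least $1$ (namely $v_0$ points to both $x$ and $y$, and $x,y$ point to each other), so $\{v_0,x,y\}\in\mathcal{K}_{3,1}(G)$ and hence $\omega(x)+\omega(y)\ge 1$. Thus $\omega$ restricted to $N^+(v_0)$ is a fractional vertex cover of $G^{\pm}[N^+(v_0)]$, so if that graph has a perfect matching (or a matching of size $n/3$), summing over its edges forces $\sum_v\omega(v)\ge n/3$, a contradiction.

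So the task reduces to: \emph{show $G^{\pm}[N^+(v_0)]$ has a matching of size $\ge n/3$}. Let $U$ be a subset of $N^+(v_0)$ of size $\delta n = 5n/6$ (we may take $U=N^+(v_0)$, but fixing the size makes the bookkeeping cleaner). Exactly as in the proof of Lemma~\ref{lma:fractionalrainbowK_4}, $\delta^+(G[U])\ge (2\delta-1)n = 2n/3$, hence
\begin{align*}
e(G^{\pm}[U]) \ge (2\delta-1)|U|\cdot n - \binom{|U|}{2} \ge (2\delta-1)\delta n^2 - \frac{\delta^2 n^2}{2} = \frac{\delta(3\delta-2)}{2}n^2 = \frac{5n^2}{24}.
\end{align*}
Now I want to conclude that a graph on $|U|=5n/6$ vertices with this many edges has a matching of size $n/3 = \tfrac25|U|$. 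Since a graph with no matching of size $\nu+1$ has at most $\max\{\binom{2\nu+1}{2}, \binom{\nu}{2}+\nu(|U|-\nu)\}$ edges by the Erd\H{o}s--Gallai theorem, with $\nu = n/3 - 1$ and $|U| = 5n/6$ one checks $\binom{2\nu+1}{2}\approx \tfrac{2n^2}{9} < \tfrac{5n^2}{24}$ and $\binom{\nu}{2}+\nu(|U|-\nu)\approx \tfrac{n^2}{18} + \tfrac n3\cdot\tfrac n2 = \tfrac{n^2}{18}+\tfrac{n^2}{6} = \tfrac{2n^2}{9} < \tfrac{5n^2}{24}$, so the edge count forces a matching of size at least $n/3$. (The $\gamma$-free version of the statement means I should be slightly careful with the lower-order terms, but $\tfrac{5n^2}{24} - \tfrac{2n^2}{9} = \tfrac{n^2}{72}$ of slack absorbs the $O(n)$ error comfortably once $n$ is large.)

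\textbf{Expected main obstacle.} The only delicate point is the final counting step: the arithmetic is tight enough that I need to be honest about which Erd\H{o}s--Gallai branch is binding and about the $\pm O(n)$ lower-order corrections coming from $\binom{|U|}{2}$ versus $|U|^2/2$ and from $\nu = n/3-1$ rather than $n/3$; getting $5/6$ (and not something slightly larger) as the threshold depends on the $\tfrac{1}{72}n^2$ slack being genuinely positive. An alternative that sidesteps Erd\H{o}s--Gallai entirely is to note that $\delta^+(G[U])\ge 2n/3 > |U|/2$ forces $G^{\pm}[U]$ to have minimum degree $\ge 2\cdot\tfrac{2n}{3} - \tfrac{5n}{6} = \tfrac{n}{2} = \tfrac35|U| > |U|/2$, and a graph with minimum degree exceeding half its order has a perfect matching (indeed is connected with a Hamilton cycle by Dirac), giving a matching of size $\lfloor |U|/2\rfloor = \lfloor 5n/12\rfloor \ge n/3$ directly — this is cleaner and is the route I would actually write up, with the Erd\H{o}s--Gallai computation as a fallback if the minimum-degree bound turns out too lossy elsewhere.
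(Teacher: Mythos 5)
The core arithmetic in your Erd\H{o}s--Gallai computation has a sign error that kills the approach. You claim $\tfrac{2n^2}{9} < \tfrac{5n^2}{24}$, but in fact $\tfrac{2}{9} = \tfrac{16}{72}$ while $\tfrac{5}{24} = \tfrac{15}{72}$, so $\tfrac{5n^2}{24} < \tfrac{2n^2}{9}$ and your ``slack'' $\tfrac{5n^2}{24} - \tfrac{2n^2}{9}$ is $-\tfrac{n^2}{72}$, not $+\tfrac{n^2}{72}$. That is, the guaranteed number of double edges in $G^{\pm}[N^+(v_0)]$ falls just short of the Erd\H{o}s--Gallai threshold for a matching of size $n/3$, so the one-step argument that works for $r \ge 4$ does not close here. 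This is not an accident: the paper explicitly flags this as the obstruction (``However, we may not have sufficiently many edges in $G^{\pm}[N^+(v_0)]$ to apply the Erd\H{o}s--Gallai theorem'') and spends the rest of the proof handling it. Specifically, if $G^{\pm}[W]$ with $W = N^+(v_0)$ has too few edges, then $e(W, \overline{W})$ must be large, forcing some vertices in $\overline{W}$ to have large in-degree; Claim~\ref{clm:weightbydegree} converts large $d^-(v)$ into a lower bound on $\omega(v)$, and the contribution $\sum_{w' \in \overline{W}} \omega(w')$ is what makes up the deficit. Your proposal is missing this entire second half.

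Your ``cleaner alternative'' is also broken, for a different reason. You compute $\delta(G^{\pm}[U]) \ge 2\cdot \tfrac{2n}{3} - \tfrac{5n}{6} = \tfrac{n}{2}$, which implicitly assumes every vertex of $U$ has both out-degree and in-degree at least $\tfrac{2n}{3}$ inside $G[U]$. But the hypothesis is only $\delta^+(G) \ge 5n/6$; there is no lower bound at all on in-degrees, so a vertex of $U$ could have in-degree $0$ in $G[U]$ and hence degree $0$ in $G^{\pm}[U]$. (The same issue is why the paper works with matchings rather than perfect matchings or Hamilton cycles, and why the later Claim~\ref{clm:weightbydegree} is phrased in terms of $d^-(v)$ for individual $v$.) So neither of your two proposed routes yields the bound $5/6$; you would need the Erd\H{o}s--Gallai count to clear $\tfrac{2n^2}{9}$, which would require raising $\delta$ above $5/6$, precisely the loss the paper's extra argument avoids.
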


We will use a classical result of Erd\H{o}s and Gallai on the Tur\'an number for matchings instead of Theorem~\ref{thm:ABHP}.

\begin{theorem}[Erd\H{o}s and Gallai~\cite{ErdosGallai1961}] \label{thm:ErdosGallai}
Let $G$ be a graph on $n$ vertices with 
\begin{align*}
e(G) > \max \left\{ \binom{2 t -1}{2}, \binom{n}{2}-\binom{n-t+1}{2} \right\}.
\end{align*}
Then $G$ contains a matching of size $t$. 
\end{theorem}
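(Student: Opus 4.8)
The plan is to prove the contrapositive: if $G$ has $n$ vertices and contains no matching of size $t$, i.e.\ $\nu(G) \le t-1$ where $\nu(G)$ denotes the size of a maximum matching, then $e(G) \le \max\{\binom{2t-1}{2}, \binom n2 - \binom{n-t+1}{2}\}$. Throughout I use the identity $\binom n2 - \binom{n-t+1}{2} = \binom{t-1}{2} + (t-1)(n-t+1)$, obtained by counting the pairs of vertices meeting a fixed $(t-1)$-set. If $n \le 2t-1$ there is nothing to do, since then $e(G) \le \binom n2 \le \binom{2t-1}{2}$; so assume $n \ge 2t$ (which in particular forces $t \le n/2$).

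First I would invoke the Berge--Tutte formula $n - 2\nu(G) = \max_{S \subseteq V(G)} \big(o(G-S) - |S|\big)$, where $o(H)$ is the number of components of $H$ with an odd number of vertices. Since $\nu(G) \le t-1$, there is a set $S$ with $o(G-S) \ge |S| + d$, where $d := n - 2t + 2 \ge 2$. Counting vertices, $n \ge |S| + o(G-S) \ge 2|S| + d$, so $|S| \le t-1$. Now I bound $e(G) \le \binom{|S|}{2} + |S|(n-|S|) + \sum_{C} \binom{|C|}{2}$, where the sum runs over the components $C$ of $G-S$, having $n - |S|$ vertices in total.

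The combinatorial heart is to show $\sum_C \binom{|C|}{2} \le \binom{2(t-|S|)-1}{2}$ given only that among the components of $G-S$ at least $|S| + d$ are odd. Merging two components never decreases $\sum_C\binom{|C|}{2}$, and one may merge freely (even components into anything, and pairs of odd components) while keeping at least $|S|+d$ odd components; a parity check ($n - |S| \equiv |S| + d \pmod 2$) shows one can reduce to exactly $|S|+d$ components, all odd, summing to $n-|S|$. Among $|S|+d$ odd parts of fixed sum, $\sum\binom{\cdot}{2}$ is maximised by making one part as large as possible and the rest equal to $1$; the large part then has size $n - 2|S| - d + 1 = 2(t-|S|) - 1$, which is indeed odd. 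Hence $e(G) \le g(|S|)$, where $g(s) := \binom s2 + s(n-s) + \binom{2(t-s)-1}{2}$, and one checks $g(0) = \binom{2t-1}{2}$ and $g(t-1) = \binom{t-1}{2} + (t-1)(n-t+1)$ — exactly the two terms of the claimed bound.

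It remains to verify that $g$ attains its maximum over $s \in \{0,1,\dots,t-1\}$ at an endpoint. Expanding, $g(s) = \tfrac32 s^2 + \big(n - 4t + \tfrac52\big)s + (2t^2 - 3t + 1)$, a convex quadratic in $s$ (leading coefficient $\tfrac32 > 0$), so $\max_{0 \le s \le t-1} g(s) = \max\{g(0), g(t-1)\}$, which finishes the contrapositive and hence the theorem. I expect the only real work to be the optimisation in the third paragraph — making the "merge, then concentrate" reduction rigorous — although this is routine. An alternative that avoids Berge--Tutte argues directly from a maximum matching $M$, its set $U$ of unmatched (hence independent) vertices, and the fact that no length-$3$ augmenting path exists, which forces $N_U(a) = N_U(b) = \{u\}$ for a single vertex $u$ whenever both endpoints of an edge $ab \in M$ have a neighbour in $U$; but tracking longer augmenting paths there makes the bookkeeping heavier, so I would present the Berge--Tutte proof.
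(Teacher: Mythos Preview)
Your argument is correct. The paper does not supply a proof of this statement at all: it is quoted as a classical theorem of Erd\H{o}s and Gallai and used as a black box in the proof of Lemma~\ref{lma:fractionalrainbowK_3}, so there is nothing to compare against.

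A couple of minor points you could tighten when writing it up. First, the merging step can be bypassed entirely: since $G-S$ has at least $m:=|S|+d$ odd components, it has at least $m$ components, and for any partition of $N:=n-|S|$ vertices into at least $m$ nonempty parts one has $\sum_C\binom{|C|}{2}\le\binom{N-m+1}{2}$ by convexity (concentrate all but $m-1$ parts into one). Your parity observation $N\equiv m\pmod 2$ then simply confirms that this upper bound is tight under the oddness constraint, but you do not need tightness, only the inequality. Second, you invoke Berge--Tutte only to produce some $S$ with $o(G-S)\ge|S|+d$; it is worth noting that your bound $e(G)\le g(|S|)$ uses only this inequality and not the exact deficiency, so any such $S$ suffices. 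The quadratic computation for $g(s)$ checks out, and convexity on $\{0,1,\dots,t-1\}$ gives the endpoint maximum as you say.
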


The proof of Lemma~\ref{lma:fractionalrainbowK_3} is similar to the proof of Lemma~\ref{lma:fractionalrainbowK_4}.
In particular, to obtain a contradiction, we seek a matching in $G^{\pm}[N^+(v_0)]$ of size~$n/3$.
However, we may not have sufficiently many edges in $G^{\pm}[N^+(v_0)]$ to apply the Erd\H{o}s--Gallai theorem (Theorem~\ref{thm:ErdosGallai}). 
In this case, we deduce that $e(G[N^+(v_0), V(G) \setminus N^+(v_0)])$ must be large. 
This means that some vertex $v \in V(G) \setminus  N^+(v_0)$ has large indegree. 
We then bound~$\omega(v)$ from below in terms of $d^-(v)$, see Claim~\ref{clm:weightbydegree}.

Let $\mathcal{F}$ be a $3$-graph.
For $v \in V(\mathcal{F})$, let $\mathcal{F}_v$ be \emph{the link graph of~$v$}, that is, the graph on $V(\mathcal{F})$ such that $S$ is an edge in~$\mathcal{F}_v$ if and only if $ S \cup \{v\}$ is an edge in~$\mathcal{F}$. In particular, if $\mathcal{F}$ is the $3$-graph on~$V(G)$ such that every edge in $\mathcal{F}$ corresponds to a member of~$\mathcal{K}_{3,1}$, then a matching in~$G^{\pm}[N^+(v)]$ is a matching in~$\mathcal{F}_{v}$, for every $v \in V(G)$.

Let $J_3$ be the digraph on vertices $x,y,z$ with edge set $\{ xy, xz , yz, zy\}$. 
Note that every member of $\mathcal{K}_{3,1}$ contains a cyclic triangle $C_3$ or  $J_3$ as a subgraph.

\begin{proof}[Proof of Lemma~\ref{lma:fractionalrainbowK_3}]
Let $\delta = 5/6$. 
By deleting edges in~$G$ if necessary, we may assume that $d^+(v) = \delta n $ for all $v \in V(G)$. 
Suppose to the contrary that $G$ does not contain a perfect fractional $(3,1)$-tiling.
Let $\mathcal{F}$ be the $3$-graph on $V(G)$ such that every $S \in \binom{V(G)}{3}$ is an edge in~$\mathcal{F}$ if and only if~$S \in \mathcal{K}_{3,1}(G)$.
By Corollary~\ref{cor:fractionaltiling}, there exists a vertex weighting $\omega : V(\mathcal{F}) \rightarrow [0,1]$ such that 
\begin{align}
	\sum_{v \in F} \omega(v) &\ge 1 \textrm{ for all $ F \in \mathcal{K}_{3,1}(G)$,} &
	\min_{v \in V(G)} \omega(v) &= 0, &
	\sum_{v \in V(G)} \omega(v) &< n/3. \label{eqn:Farkas1}
\end{align}
Let $v_0 \in V(G)$ be such that $\omega(v_0) = 0$.
Let $W = N^+(v_0)$ and $e^{\pm} = e(G^{\pm}[W])$, so $|W| = \delta n$.
Note that, for each $xy \in E(G^{\pm}[W])$, $v_0xy \in \mathcal{F}$ and so $\omega(x)+\omega(y) \ge 1$ by~\eqref{eqn:Farkas1}.
Let $m$ be the size of largest matching in $G^{\pm}[W]$.
Hence, together with Theorem~\ref{thm:ErdosGallai}, we have 
\begin{align}
	\sum_{w \in W} \omega(w) \ge m \ge \min\left\{  \sqrt{e^{\pm}/2}-\gamma^5n,  \delta n - \sqrt{\delta ^2 n^2 - 2e^{\pm}} \right\}. \label{eqn:matching}
\end{align}
Again by~\eqref{eqn:Farkas1}, we deduce that $m \le n/3$ and so $e^{\pm} \le \binom{2n/3+1}{2} = 2n^2/9 + n/3$ 
by~Theorem~\ref{thm:ErdosGallai}. 
Note that  
\begin{align}
	\frac{2n^2}{9} + \frac{n}{3}\ge e^{\pm} & \ge e(G[W]) - \binom{\delta n}{2}
	= ( \delta n |W| - e(W,\overline{W}) )  - \binom{\delta n}{2}
	\ge \frac{\delta^2 n^2}{2} - e(W,\overline{W}).  \label{eqn:e(G+-)}
\end{align}
Together with our choice of~$\delta$, we have 
\begin{align*}
e(W,\overline{W}) \ge \frac{\delta^2 n^2}{2}  - \frac{2n^2}{9} - \frac{n}{3} \ge (1- \delta)(4/3- \delta)n^2.
\end{align*}
Let $s$ be the largest integer such that 
\begin{align}
		\sum_{w' \in \overline{W}} d^-(w') \ge  e(W,\overline{W}) \ge s \delta n  + (( 1-\delta)n - s )(4/3- \delta)n. \label{eqn:sumW}
\end{align}
In particular, we have
\begin{align}
e(W,\overline{W}) &< (s+1) \delta n + ( (1- \delta) n - (s+1) ) ( 4/3 - \delta)n \nonumber \\
    &= s \delta n + ( (1- \delta) n - s ) ( 4/3 - \delta)n  + 2(\delta -2/3)n.
    \label{eqn:uppere(W,W)}
\end{align}
We now bound $\sum_{w' \in \overline{W}} \omega(w')$ in the following claim. 

\begin{claim} \label{clm:weightbydegree}
We have
\begin{align*}
 \sum_{w' \in \overline{W}} \omega(w') \ge \left( 1- \frac{2}{3 \delta} \right)s.
\end{align*}
\end{claim}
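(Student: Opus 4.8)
The plan is to show that each vertex $w' \in \overline W$ with large in-degree must carry a correspondingly large weight, and then to sum these lower bounds over $\overline W$ using the defining inequality~\eqref{eqn:sumW} for $s$. First I would establish the pointwise bound: for $w' \in \overline W$, I claim $\omega(w') \ge 1 - \tfrac{1}{3\delta}\bigl(\delta n - d^-_{W}(w')\bigr)^{+}$ or something of this shape, where $d^-_W(w')$ counts in-neighbours of $w'$ inside $W$. The mechanism is that if $w'$ has many in-neighbours in $W = N^+(v_0)$, then $w'$ sees a large set $A \subseteq W \cap N^-(w')$; since $\delta^+(G) = \delta n$, every vertex of $A$ has $\delta n$ out-neighbours, so inside $A$ (of size roughly $2\delta n - n$ once we intersect two out-neighbourhoods) one finds an edge $xy$ of $G^{\pm}$, or more directly one finds a cyclic triangle $C_3$ or a copy of $J_3$ on $\{v_0, x, w'\}$-type configurations — recall the remark that every member of $\mathcal{K}_{3,1}$ contains $C_3$ or $J_3$. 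Concretely, for $x,y \in N^-(w') \cap W$ with $xy \in E(G)$, the triple $\{x,y,w'\}$ together with the arcs into $w'$ and the arc $v_0 \to x$ realises a member of $\mathcal{K}_{3,1}$ (after checking out-degrees), forcing $\omega(x)+\omega(y)+\omega(w') \ge 1$; iterating over a near-perfect matching of such edges inside $N^-(w')\cap W$ and combining with $\sum_{w\in W}\omega(w) < n/3$ yields a lower bound on $\omega(w')$ linear in $d^-(w')$.

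Next I would feed these pointwise bounds into a global count. Writing $\omega(w') \ge \beta\,(d^-(w') - \delta n) $ for the appropriate slope $\beta = \tfrac{1}{3\delta}$ whenever $d^-(w') \ge \delta n$ (and $\omega(w')\ge 0$ otherwise, which is harmless), we get
\begin{align*}
\sum_{w'\in\overline W}\omega(w') \ge \beta\sum_{\substack{w'\in\overline W\\ d^-(w')\ge \delta n}}\bigl(d^-(w')-\delta n\bigr).
\end{align*}
The definition of $s$ via~\eqref{eqn:sumW} says exactly that $\sum_{w'\in\overline W}d^-(w') \ge e(W,\overline W)$ is at least the value attained when $s$ of the vertices of $\overline W$ have in-degree $\delta n$ (the maximum possible) and the remaining $(1-\delta)n - s$ have in-degree $(4/3-\delta)n$. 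Since $x \mapsto (x - \delta n)^+$ is convex and the total in-degree sum is fixed from below, the extremal configuration minimising $\sum (d^-(w')-\delta n)^+$ subject to $d^- \le \delta n$ pointwise and the given sum constraint is precisely to push $s$ vertices to the cap $\delta n$; this gives $\sum_{w'}(d^-(w')-\delta n)^+ \ge s(\delta n - (4/3-\delta)n) = s(2\delta - 4/3)n$, whence
\begin{align*}
\sum_{w'\in\overline W}\omega(w') \ge \tfrac{1}{3\delta}\cdot s(2\delta - 4/3)n \cdot \tfrac1n = \Bigl(1 - \tfrac{2}{3\delta}\Bigr)s,
\end{align*}
as required — here I am using $\tfrac{2\delta-4/3}{3\delta} = \tfrac{2}{3} - \tfrac{4}{9\delta}$; I would recheck the exact arithmetic so the slope $\beta$ and the bracket $(1-\tfrac{2}{3\delta})$ match, possibly absorbing lower-order $O(n)$ and $O(s)$ error terms (coming from "near-perfect" rather than perfect matchings and from the $n/3$ bound) into the $\ll$ hierarchy or into a $\gamma$-slack exactly as is done with $\gamma^5 n$ in~\eqref{eqn:matching}.

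The main obstacle I expect is the pointwise weight bound $\omega(w')\gtrsim \tfrac{1}{3\delta}(d^-(w')-\delta n)^+$: one must genuinely produce enough vertex-disjoint members of $\mathcal{K}_{3,1}$ through $w'$ inside the fixed set $N^-(w')\cap W$ to leverage both $\sum_{w\in W}\omega(w)<n/3$ and $\omega(v_0)=0$. The delicate points are (i) verifying the out-degree condition $\delta^+ \ge 1$ on each triple so that it is really a copy of a digraph in $\mathcal{K}_{3,1}$ — this is where the $C_3$-or-$J_3$ dichotomy and the regularised assumption $d^+(v)=\delta n$ for all $v$ are used — and (ii) getting a matching of size close to $|N^-(w')\cap W|/3$ rather than just one edge, which again runs through an Erd\H{o}s--Gallai-type count on $G^{\pm}$ restricted to $N^-(w')\cap W$, exactly paralleling the main argument. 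Everything downstream is convexity plus bookkeeping.
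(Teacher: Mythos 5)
Your high-level plan --- derive a pointwise lower bound on $\omega(w')$ as an increasing function of the in-degree of~$w'$, then feed the in-degree lower bound~\eqref{eqn:sumW} into a convexity/extremal count --- is indeed the same strategy as the paper's proof. However, the two key steps of your sketch do not go through as written, so this is a genuine gap rather than a mere restyling.

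First, your mechanism for producing members of $\mathcal{K}_{3,1}(G)$ through $w'$ is flawed. You take $x,y \in N^-(w')\cap W$ with $xy$ an edge (or double edge), but then in $G[\{x,y,w'\}]$ the vertex $w'$ has out-degree zero (neither $w'\to x$ nor $w'\to y$ is an arc), so $\delta^+(G[xyw'])=0$ and the triple is \emph{not} in $\mathcal{K}_{3,1}(G)$; the arc $v_0\to x$ is external to the triple and cannot raise its internal minimum out-degree. The paper instead pairs an out-neighbour of $w'$ with an in-neighbour: for each $x\in N^+(w')$ and each $y\in N^+(x)\cap N^-(w')$ one has the cyclic triangle $w'\to x\to y\to w'$, and $|N^+(x)\cap N^-(w')|\ge d^+(x)+d^-(w')-n\ge d^-(w')-(1-\delta)n$, using the \emph{full} in-degree $d^-(w')$ rather than $d^-_W(w')$. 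A greedy argument (no Erd\H{o}s--Gallai is needed here) then gives a matching of size $f(w')n=\min\{d^-(w')-(1-\delta)n,\ \delta n/2\}$ in the link graph~$\mathcal{F}_{w'}$, and combining $\omega(w')+\omega(x_i)+\omega(y_i)\ge 1$ with $\sum_{v\in V(G)}\omega(v)<n/3$ yields $\omega(w')>1-\frac{1}{3f(w')}$.

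Second, even granting a linear pointwise bound, your aggregation does not close --- as you yourself flag. Under your stated cap $d^-\le\delta n$ the quantity $\bigl(d^-(w')-\delta n\bigr)^+$ is identically zero, so the claimed bound $\sum_{w'}(d^-(w')-\delta n)^+\ge s(2\delta-4/3)n$ cannot hold; and even ignoring that, your final identity requires $\frac{2\delta-4/3}{3\delta}=1-\frac{2}{3\delta}$, which is false (for $\delta=5/6$ the two sides are $2/15$ and $1/5$). The underlying reason is that your pointwise bound kicks in only at $d^-(w')=\delta n$, whereas the correct bound $1-\frac{1}{3f(w')}$ is already positive once $d^-(w')>(4/3-\delta)n=n/2$, giving a steeper effective slope; the paper then uses that $1-\frac{1}{3f}$ is concave in $d^-(w')$ on this range together with~\eqref{eqn:sumW} to extract the stated $(1-\tfrac{2}{3\delta})s$.
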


\begin{proofclaim}
For $ v \in V(G)$, let $f(v) = \min \{ d^-(v)/n + \delta-1, \delta /2 \} $.
Consider $v \in V(G)$. 
For each $x \in N^+(v)$, 
\begin{align*}
	|N^+(x) \cap N^-(v) | \ge d^+(x) + d^-(v) - n \ge d^-(v) - (1- \delta)n.
\end{align*}
For each $y \in N^+(x) \cap N^-(v)$, $G[vxy]$ contains a~$C_3$, that is, $vxy \in E(\mathcal{F})$. 
Thus, in the link graph~$\mathcal{F}_v$, each $x \in N^+(v)$ has degree at least $d^-(v) - (1- \delta)n$. 
Hence (by a greedy argument) $\mathcal{F}_v$ contains a matching of size at least~$\min \{ d^-(v) - (1- \delta)n, \delta n /2 \}  =  f(v) n$. 
Let $M = \{ x_iy_i : i \in [f(v) n ] \}$ be a matching in~$\mathcal{F}_v$. 
By~\eqref{eqn:Farkas1}, we have
\begin{align*}
	 n/3 & > \sum_{v' \in V(G)} \omega(v') \ge  \sum_{i \in [f(v)n ] }\left(  \omega(x_i)+ \omega(y_i) \right) 
	\ge \sum_{i \in [f(v)n] } ( 1 - \omega(v)) =  ( 1 - \omega(v)) f(v) n.
\end{align*}
After rearranging, we obtain that for all $ v \in V(G)$, $\omega(v) > 1 - \frac{1}{3 f(v)}$ and so
\begin{align*}
	\omega(v) \ge
	\begin{cases}
		1- \frac{2}{3 \delta} & \text{if $d^-(v) \ge (1-\delta/2)n$,}\\
		1-\frac{1}{3(d^-(v)/n + \delta-1)} & \text{if $(4/3 - \delta)n < d^-(v) < (1-\delta/2)n$,}\\
		0 & \text{if $d^-(v) \le (4/3 - \delta)n$.}
	\end{cases}
\end{align*}
In particular, $1 - \frac{1}{3 f(v)}$ is a concave function in terms of~$d^-(v)$ when $d^-(v)>  (4/3 - \delta)n$.
Together with \eqref{eqn:sumW}, we get
\begin{align*}
	\sum_{w' \in \overline{W}} \omega(w') 
	\ge \left( 1- \frac{2}{3 \delta}\right) s 
\end{align*}
as required.
\end{proofclaim}

Let $\sigma = s/n$, so $0 \le \sigma \le 1-\delta$.
Note that 
\begin{align*}
\frac1n \left(\sqrt{\frac{e^{\pm}}2} - \gamma^5 + \left( 1- \frac{2}{3 \delta} \right)s \right)
&\overset{\mathclap{\text{\eqref{eqn:e(G+-)}}}}{\ge} \sqrt{\frac{\delta^2}4-\frac{e(W,\overline{W})}{2n^2}} - \frac{\gamma^5}{n} + \left( 1- \frac{2}{3 \delta} \right)\sigma\\
& \overset{\mathclap{\text{\eqref{eqn:uppere(W,W)}}}}{\ge}
\sqrt{\frac{\delta^2}4 - \frac{\sigma \delta   + ( 1-\delta - \sigma )(4/3- \delta)}2 - \frac{\delta -2/3}{n}} - \frac{\gamma^5}{n}  + \left( 1- \frac{2}{3 \delta} \right)\sigma 
\\
&\ge 1/3.
\end{align*}
Similarly, we have 
\begin{align*}
\frac1n \left( \delta n - \sqrt{\delta ^2 n^2 - 2e^{\pm}} + \left( 1- \frac{2}{3 \delta} \right)s \right)
>1/3 .
\end{align*}
Finally, by~\eqref{eqn:matching} and Claim~\ref{clm:weightbydegree}, we have
\begin{align*}
\sum_{v \in V(G)} \omega(v) &\ge  \sum_{w \in W} \omega(w) + 	\sum_{w' \in \overline{W}}\omega(w') 
\\
& \ge \min\left\{  \sqrt{e^{\pm}/2}- \gamma^5n,  \delta n - \sqrt{\delta ^2 n^2 - 2e^{\pm}} \right\} + \left( 1- \frac{2}{3 \delta} \right)s
\ge n/3,
\end{align*}
 contradicting~\eqref{eqn:Farkas1}.
\end{proof}




\color{black}

\section{Proof of Theorem~\ref{thm:main}} \label{sec:proofmain} 

We are now ready to combine our results on absorption and fractional tilings to prove \Cref{thm:main}.

\begin{proof}[Proof of~\Cref{thm:main}]
    Let $1/n_0 \ll 1/t \ll \phi \ll \eta \ll \eps, 1/r $ and suppose $G$ is an edge-coloured graph on $n\geq n_0$ vertices satisfying the conditions of the statement. Define \begin{align*}
\delta_r = 
	\begin{cases}
	5/6  & \text{if $r = 3$,}\\
	\frac{2+\sqrt{13/2}}{5}  & \text{if $r =4$,}\\
	1- \frac{1}{(2r-3)r} & \text{if $r \ge 5$.}
	\end{cases}
\end{align*}
By assumption we have $\delta^c(G)\geq (\delta_r +\eps)n \geq (1-\frac{1}{2(r-1)} + \eps)n$. By \Cref{prop:1-closed}, $G$ is $(1,\eta;K_r)$-closed. Hence we can apply \Cref{lemma:LoMarkstrom} with $s=1$ to find an absorbing set $W \subseteq V(G)$ of order at most~$\eta n$ such that both $G[U \cup W]$ and $G[W]$ contain perfect rainbow-$K_r$-tilings, for every $U \subseteq V(G) \setminus W$ satisfying $|U| \in r\mathbb{N}$ and $|U|\leq \phi n$. 

Define $G' = G[V(G)\setminus W]$. Then $\delta^c(G') \geq \delta^c(G) - |W| \geq (\delta_r + 4\eta)n$. 
By Lemmas~\ref{lma:fractionalrainbowK_4r},~\ref{lma:fractionalrainbowK_4}~and~\ref{lma:fractionalrainbowK_3}, every digraph $R$ on $t$ vertices with $\delta^+(G) \ge (\delta_r + \eta)t$ contains a perfect fractional $(r,r-2)$-tiling. Applying~\Cref{lem:almost_tiling} to $G'$ (with $4\eta, \phi^{2}/9$ and $\delta_r +\eta$ playing the roles of $\gamma, \eps$ and $\delta$ respectively), there exists a rainbow-$K_r$-tiling $\mathcal{T}_1$ in $G'$ covering all but at most $\phi n$ vertices. Let $U = V(G') \setminus V(\mathcal{T}_1)$, so that $|U| \in r\mathbb{N}$ and $|U| \le \phi n$. By our choice of $W$, we know that $G[U \cup W]$ contains a perfect rainbow-$K_r$-tiling $\mathcal{T}_2$. Therefore $\mathcal{T}_1 \cup \mathcal{T}_2$ is a perfect rainbow-$K_r$-tiling in $G$.
\end{proof}


\section{Remarks} \label{sec:remark}

We would like to know the minimum colour degree threshold for rainbow-$K_r$-tilings. 
We conjecture that the construction given by Proposition~\ref{prop:extremalconstruction} is sharp. 

\begin{conjecture} \label{conj}
Let $1/n \ll 1/r$ with $ n \equiv 0 \pmod r$. 
Let $G$ be an edge-coloured graph on $n$ vertices with $\delta^c(G) \ge \left( 1- \frac{r-1}{r^2-2} \right)n$. 
Then $G$ contains a perfect rainbow-$K_r$-tiling. 
\end{conjecture}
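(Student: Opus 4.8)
The plan is to follow the absorption-plus-almost-tiling strategy used to prove \Cref{thm:main}, pushing both ingredients down to the conjectured threshold $\delta_r^\ast:=1-\frac{r-1}{r^2-2}$. So assume $\delta^c(G)\ge(\delta_r^\ast+\eps)n$ with $\eps>0$ small. For the absorption step I would want, via \Cref{lemma:LoMarkstrom}, a constant $s=s(r)$ such that $G$ is $(s,\eta;K_r)$-closed. The length-one argument of \Cref{prop:1-closed} only reaches $1-\frac1{2(r-1)}$, which is strictly larger than $\delta_r^\ast$ for $r\ge3$ (indeed $\frac1{2(r-1)}-\frac{r-1}{r^2-2}=-\frac{(r-2)^2}{2(r-1)(r^2-2)}\le0$), so one is forced to use connectors of length $s\ge2$. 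To certify that two vertices $x,y$ are $(2,\eta;K_r)$-close it suffices to produce $\Omega(n^{r-1})$ rainbow copies of $K_{r-1}$ that extend to a rainbow $K_r$ both by adding $x$ and by adding $y$: each such $K_{r-1}$ together with a vertex-disjoint rainbow $K_r$ (of which there are $\Omega(n^r)$, since $\delta_r^\ast$ is above the Czygrinow--Molla--Nagle threshold for a single rainbow $K_r$ and one can supersaturate) forms an $(x,y;K_r)$-connector of length $2$. For $r=3$ this is exactly the strengthened absorption lemma proved in Appendix~\ref{sec:absorbingrevisit}; for $r\ge4$ one would establish the analogue, allowing $s$ to be a larger constant, or replacing connectors by a more flexible absorbing gadget, if $s=2$ is not enough.

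Given the absorbing set $W$, write $G'=G[V(G)\setminus W]$, so that $\delta^c(G')\ge(\delta_r^\ast+\eps/2)n$, and apply \Cref{lem:almost_tiling} to $G'$. This reduces the problem to a purely digraph statement: every digraph $R$ on $t$ vertices with $\delta^+(R)\ge(\delta_r^\ast+\gamma)t$ contains a perfect fractional $(r,r-2)$-tiling. Here nothing is lost: applying \Cref{prop:conversion} to the extremal edge-coloured graph of \Cref{prop:extremalconstruction} yields a digraph with minimum out-degree $(1-o(1))\delta_r^\ast$ times its order in which every member of $\mathcal K_{r,r-2}$ that meets the part $X$ uses exactly one vertex of $X$ and $r-1$ vertices of $Y$; since $|Y|<(r-1)|X|$, there is no perfect fractional $(r,r-2)$-tiling, so $\delta_r^\ast$ is indeed the right threshold for this digraph problem.

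To prove the digraph statement one follows the scheme of Lemmas~\ref{lma:fractionalrainbowK_4r}--\ref{lma:fractionalrainbowK_3}: suppose not, obtain from \Cref{cor:fractionaltiling} a weighting $\omega$ with $\min_v\omega(v)=0$ and $\sum_v\omega(v)<n/r$, fix $v_0$ with $\omega(v_0)=0$, and aim to find $n/r$ vertex-disjoint copies of $K_{r-1}$ in $G^{\pm}[N^+(v_0)]$ — each becomes a member of $\mathcal K_{r,r-2}$ with $v_0$ and so carries $\omega$-weight at least $1$, contradicting $\sum_v\omega(v)<n/r$. The obstacle, already present in the proof of \Cref{lma:fractionalrainbowK_3}, is that only out-degrees are controlled, so $G^{\pm}[N^+(v_0)]$ can be sparse when many out-neighbours of $v_0$ have small in-degree. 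One must therefore combine a sharp Hajnal--Szemer\'edi-type $K_{r-1}$-tiling statement on the dense core of $N^+(v_0)$ with a degree-dependent lower bound $\omega(v)\ge g_r(d^-(v)/n)$ for the low-indegree vertices (the $r\ge4$ analogue of \Cref{clm:weightbydegree}, $g_r$ being the relevant concave profile), and balance the weight extracted from the tiling against the weight forced on the low-indegree vertices so that the total beats $n/r$ precisely when $\delta^+(R)\ge(\delta_r^\ast+\gamma)t$. I expect this — in effect a tight, weighted, digraph strengthening of the Corr\'adi--Hajnal and Hajnal--Szemer\'edi theorems — to be the main obstacle; the current Lemmas~\ref{lma:fractionalrainbowK_4r} and~\ref{lma:fractionalrainbowK_4} are still far from $\delta_r^\ast$.

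Finally, the conclusion is routine: from an almost-perfect rainbow-$K_r$-tiling $\mathcal T_1$ of $G'$ covering all but $o(n)$ vertices, put $U=V(G')\setminus V(\mathcal T_1)$, so $|U|\in r\mathbb N$ and $|U|$ is small; the absorbing property of $W$ supplies a perfect rainbow-$K_r$-tiling $\mathcal T_2$ of $G[U\cup W]$, and $\mathcal T_1\cup\mathcal T_2$ is the desired perfect rainbow-$K_r$-tiling of $G$. The bottlenecks are thus (i) the fractional $(r,r-2)$-tiling for digraphs at the sharp threshold when $r\ge4$, and (ii), secondarily, a strong enough absorption lemma for $r\ge4$.
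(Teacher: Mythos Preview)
This statement is a \emph{conjecture} that the paper explicitly leaves open; there is no proof in the paper to compare against. Your proposal is not a proof but a strategy outline, and you are candid about this: you flag both the absorption step for $r\ge4$ and, more seriously, the fractional $(r,r-2)$-tiling in digraphs at the sharp threshold $\delta_r^\ast$ as unresolved bottlenecks. That assessment is accurate and matches the paper's own remarks in Section~\ref{sec:remark}: for $r=3$ the absorption is already available at $(2/3+\eps)n$ via Lemma~\ref{lem:absorbing}, so only the almost-tiling step is missing, whereas for $r\ge4$ neither ingredient is known at $\delta_r^\ast$.

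A few comments on the details. Your sign computation showing $1-\tfrac{1}{2(r-1)}>\delta_r^\ast$ for $r\ge3$ is correct, so \Cref{prop:1-closed} indeed does not suffice. Your description of length-$2$ connectors is fine. Your argument that the digraph obtained from the construction in \Cref{prop:extremalconstruction} via \Cref{prop:conversion} has no perfect fractional $(r,r-2)$-tiling is also correct (the key point being that each $y\in Y$ has zero out-degree into $X$ in the resulting digraph, so any $K\in\mathcal{K}_{r,r-2}$ meeting $X$ uses exactly one vertex of $X$ and $r-1$ of $Y$); this confirms that $\delta_r^\ast$ is tight for the digraph problem as well. However, your sketch of how to push the Farkas-weighting argument of Lemmas~\ref{lma:fractionalrainbowK_4r}--\ref{lma:fractionalrainbowK_3} down to $\delta_r^\ast$ is just that---a sketch. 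Even for $r=3$ the paper's method stalls near $0.832$, well above $5/7$, and for $r\ge5$ the paper notes that no density Hajnal--Szemer\'edi theorem is available to feed into the argument. So the genuine gap is exactly where you place it: bottleneck~(i) is a substantial open problem, not a routine extension.
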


When $r=3$ we can prove that an absorption lemma already exists when the minimum colour degree is at least $(2/3 + \gamma)n$, which is much less than the conjectured bound. 
See Lemma~\ref{lem:absorbing} in Appendix.
Therefore in order to prove Conjecture~\ref{conj} for $r=3$, it suffices to find an almost perfect rainbow-$K_3$-tiling (or a perfect fractional $(3,1)$-tiling in the digraph setting). Note that our proof of~\Cref{lma:fractionalrainbowK_3} still works with $\delta^c(G)\geq (0.832+\gamma)n$, a minor improvement on $5/6$. 
However we believe this can be lowered further, but new ideas would be needed. 

For $r \ge 5$, we are unable to obtain any `reasonable' upper bound for the existence of a rainbow-$K_r$-tiling. 
One reason is that we are unaware of a density version of the Hajnal--Szemeredi theorem, that is, the Tur\'an number for $t$ vertex-disjoint~$K_{r-1}$.

In this paper, we only focus on rainbow-$K_r$-tilings.
A rainbow-$K_r$-tiling is a properly coloured $K_r$-tiling, so one could ask the minimum colour degree threshold of a properly coloured-$K_r$-tiling instead. 
However, we feel that it should be the same asymptotically as the one for rainbow-$K_r$-tilings.
Using Proposition~\ref{prop:conversion}, one can consider the corresponding digraph problem.
By the regularity lemma and Corollary~\ref{cor:conversionK_t}, finding a properly coloured $K_r$ is the same as finding a properly coloured~$K_r(r^3)$, which contains a copy of rainbow~$K_r$ by Lemma~\ref{lma:KMSV}.

\subsection{Generalising $(r,r-2)$-tilings.}

We show that one can translate a minimum colour-degree problem into a digraph problem. 
Recall that $\mathcal{K}_{r,r-2}$ is the set of all digraphs on $r$ vertices with complete base graph and the minimum out-degree at least $r-2$. 
We believe the thresholds for the existence for a perfect $(r,r-2)$-tilings should be asymptotically the same as for the existence for a perfect rainbow-$K_r$-tilings.
This is true when $r =3$ by Lemmas~\ref{lem:almost_tiling} and~\ref{lem:absorbing}.

Two natural extensions would be to relax the minimum out-degree and to remove the condition that the base graph is complete. 
Let $s < r $. 
Let $\mathcal{K}_{r,s}$ be the set of all digraphs on $r$ vertices with minimum out-degree at least~$s$ with complete base graph. 
Let $\mathcal{K}^*_{r,s}$ be the set of all digraphs on $r$ vertices with minimum out-degree at least~$s$.
Note that $\mathcal{K}_{r,r-1} = \mathcal{K}^*_{r,r-1}$ is the complete digraph on $r$ vertices. 

\begin{question}
Let $ s< r$. 
What is the minimum out-degree threshold that guarantees a perfect $\mathcal{K}_{r,s}$-tiling (or a perfect $\mathcal{K}^*_{r,s}$-tiling)?
\end{question}

\section*{Acknowledgment}
This project was initially funded through the London Mathematical Society undergraduate research bursary scheme, grant number URB-2022-13.

\bibliographystyle{amsplain}
\bibliography{RainbowTriangleTiling}

\appendix

\section{Absorption lemma for rainbow-$K_3$-tilings - revisit} \label{sec:absorbingrevisit}

In this section, we show that $\delta^c(G) \ge (2/3+\eps)n$ already guarantees an absorption lemma for the rainbow-$K_3$-tiling.
Note that $\delta^c(G) \ge (2/3+\eps)n$ is well below the conjectured extremal construction of $5n/7$.

\begin{lemma} \label{lem:absorbing}
Let $1/n \ll \phi \ll \eps \ll 1$.
Let $G$ be an edge-coloured graph on $n$ vertices with $\delta^c(G)\ge (2/3+ \eps) n$.
Then there exists an absorbing set $W\subseteq V(G)$ of order at most $\eps n$ so that for every $U \subseteq V(H) \setminus W$ with $|U| \le \phi n$ and $|U|\in 3\mathbb{N}$, both $G[W]$ and $G[U\cup W]$ contain perfect rainbow-$K_3$-tilings.
\end{lemma}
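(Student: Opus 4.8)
The plan is to follow the standard absorption framework of R\"odl--Ruci\'nski--Szemer\'edi, as packaged in Lemma~\ref{lemma:LoMarkstrom} with $r=3$: it suffices to show that under $\delta^c(G) \ge (2/3+\eps)n$ the graph $G$ is $(s,\eta;K_3)$-closed for some bounded $s$ and some $\eta$ with $1/n \ll \phi \ll \eta \ll \eps$. The difficulty is that Proposition~\ref{prop:1-closed} only gives $1$-closedness at the much larger threshold $(3/4+\eps)n$ (this is the $r=3$ instance $1-\frac{1}{2(r-1)}=3/4$), so I cannot directly invoke it. Instead I will allow longer connectors: I will prove that any two vertices $x,y$ are $(s,\eta;K_3)$-close for some absolute constant $s$ (I expect $s=2$ or $s=3$ to suffice), which is still enough to apply Lemma~\ref{lemma:LoMarkstrom}.

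First I would set up an auxiliary ``closeness'' or ``reachability'' relation: say $x \leadsto y$ if $x$ and $y$ are $(1,\eta_1;K_3)$-close for a suitable $\eta_1 \gg \eta$. The key local claim is that every vertex $x$ has a large reachability neighbourhood: $|\{y : x \leadsto y\}| \ge (\text{something like } 1/3 + \eps')n$. To see this, convert $G$ near $x$ into a digraph or just argue directly: for a fixed $x$, consider the set $N(x)$ of colours-distinct neighbours, $|N(x)| \ge (2/3+\eps)n$; for most pairs $y,y'$ that are both joined to a common vertex $z$ with all three edges $xz, yz, y'z$-compatible colours, one builds many $(y,y';K_3)$-connectors of length $1$ of the form $S=\{z\}$ together with completing a rainbow triangle on each of $\{y,z\},\{y',z\}$ — here I would mimic the counting in the proof of Proposition~\ref{prop:1-closed}, using the colour-degree condition to find $\eta_1 n$ common ``pivot'' vertices $z$ and then a rainbow $K_3$ blow-up via Lemma~\ref{lma:KMSV} and the Erd\H{o}s--Stone argument. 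A cleaner route: show that for each $x$, the set $A_x$ of vertices $1$-close to $x$ satisfies $|A_x| > n/2$, hence any two $A_x, A_y$ intersect, giving $2$-closedness directly; the threshold $2/3$ should comfortably yield $|A_x| > n/2$ because within $N(x)$ the colour-degree condition forces a dense enough auxiliary digraph that $K_2(t)$-tilings (i.e. just many disjoint rainbow triangles through $x$) exist covering a $(\ge 2/3)$-fraction.

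Concretely the steps are: (1) fix $x,y$; (2) using $\delta^c \ge (2/3+\eps)n$ build, as in Proposition~\ref{prop:1-closed} and via Proposition~\ref{prop:conversion}/Corollary~\ref{cor:conversionK_t} if convenient, the sets $A_x=\{w: x \text{ is }(1,\eta_1;K_3)\text{-close to }w\}$ and show $|A_x|,|A_y| > n/2$; (3) pick $w \in A_x \cap A_y$, concatenate an $(x,w;K_3)$-connector of length $1$ with a $(w,y;K_3)$-connector of length $1$ on disjoint vertex sets — the number of ways to do this on disjoint sets is $\Theta(n) \cdot \eta_1 n^{2} \cdot \eta_1 n^{2} = \Omega(\eta_1^2 n^{5})$ choices of $(w, S_1, S_2)$, each giving an $(x,y;K_3)$-connector of length $\le 3$ (adjust $rs-1$ bookkeeping: two triangles on each side sharing $w$, so length $2$ after identifying $w$, size $5$), hence $x,y$ are $(2,\eta;K_3)$-close with $\eta \ll \eta_1^2$; (4) conclude $G$ is $(2,\eta;K_3)$-closed and apply Lemma~\ref{lemma:LoMarkstrom} with $s=2$, $r=3$, replacing the output parameter $\eta$ there by our target $\eps$, which gives the absorbing set $W$ with $|W|\le \eps n$ and the stated absorption property for all $U$ with $|U|\in 3\mathbb{N}$, $|U|\le\phi n$.

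The main obstacle will be step~(2): proving $|A_x| > n/2$ (or any $>n/2$-type bound ensuring the neighbourhoods intersect) from the colour-degree hypothesis $2/3+\eps$ rather than the naive $3/4$. I would attack it by working inside $N(x)$: after passing to a colour-critical spanning subgraph and orienting colours toward star centres (Proposition~\ref{prop:conversion}), the relevant compatibility digraph on $N(x)$ has out-degree roughly $(\delta^c(G) - |V(G)\setminus N(x)| - o(n)) = (2\cdot(2/3) - 1 - o(1))n = (1/3 - o(1))n$ inside a set of size $(2/3+\eps)n$, so its underlying $G^{\pm}$ has positive density; a supersaturation/blow-up argument then produces $\Omega(n)$ vertex-disjoint rainbow triangles through $x$, and the union of their vertex sets has size $\ge (1/3 \cdot 2/3)\cdot\text{const}$ — I need to verify this crosses $1/2$, and if it does not I will instead use three-step connectors (take $s=3$) and only need $|A_x|>0$ plus an expansion/connectivity argument that $A$-neighbourhoods of $A_x$ cover most of $V(G)$, which is the genuinely delicate bookkeeping; the arithmetic tying the constant exactly to whether $2/3$ suffices is where the real work lies, and it is why the paper isolates this as a separate appendix lemma rather than folding it into Proposition~\ref{prop:1-closed}.
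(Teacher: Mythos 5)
Your high-level plan is right: apply Lemma~\ref{lemma:LoMarkstrom} and reduce to showing that $G$ is $(s,\eta;K_3)$-closed for some bounded~$s$, which is exactly what the paper does via Lemma~\ref{lem:closed}. But the crux --- your step~(2), the claim that the $1$-close neighbourhood $A_x$ has size greater than $n/2$ so that any two such neighbourhoods intersect --- is not established, and you candidly admit this. The arithmetic you sketch does not get there: inside $N(x)\cap N(y)$ (size roughly $(1/3+2\eps)n$) the compatibility digraph has out-degree only about $\eps n$, and the resulting count of $1$-connectors gives no lower bound of the form $|A_x|>n/2$; in fact nothing in the $2/3$-threshold hypothesis forces the $1$-close relation to have the needed ``majority'' expansion. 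Your fallback (``use three-step connectors plus an expansion/connectivity argument'') is exactly where the real theorem lives, and it is left entirely unproved.

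The paper takes a genuinely different and more structural route precisely to sidestep this. It converts $G$ to a digraph $H$ (Proposition~\ref{prop:conversion}, Corollary~\ref{cor:conversionK_3}), applies the diregularity lemma, and works in a reduced digraph $R$ with $\delta^+(R) > 2k/3$. The key input is Lemma~\ref{lma:weaklyconnected}: \emph{any} digraph with $\delta^+ > 2n/3$ is weakly $s$-connected, proved by a global edge-counting argument over a carefully chosen partition $(Y,W,Z,T)$ of the vertex set, together with the bootstrapping Propositions~\ref{prop:weaklyconnected1}--\ref{prop:connected}. This replaces your hoped-for local bound $|A_x|>n/2$ with a stronger global statement that does not rely on pairwise neighbourhood intersections. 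The closedness of clusters is then transferred back to $G$ via the counting lemma, and Lemma~\ref{lem:almostclosed} upgrades an almost-spanning closed set to full closedness. Without something playing the role of the weakly-connectedness lemma, your argument does not close; the gap is the absence of any proved mechanism for joining two arbitrary vertices through a bounded number of $K_3$-connectors at the $2/3$ threshold.
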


By Lemma~\ref{lemma:LoMarkstrom}, it suffices to prove the following. 

\begin{lemma} \label{lem:closed}
Let $1/n \ll \eta \ll 1/s \ll \eps \ll 1$.
Let $G$ be an edge-coloured graph on $n$ vertices with $\delta^c(G)\ge (2/3+ 3\eps) n$.
Then $G$ is $(s,\eta;K_3)$-closed.
\end{lemma}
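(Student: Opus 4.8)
The plan is to show that every pair of vertices $x,y$ is $(s,\eta;K_3)$-close by exhibiting many $(x,y;K_3)$-connectors of length~$s$. Since $\delta^c(G) \ge (2/3+3\eps)n$ comfortably exceeds $(1 - \tfrac14 + \eps)n = (3/4+\eps)n$... wait, in fact $2/3 < 3/4$, so Proposition~\ref{prop:1-closed} (which needs $\delta^c \ge (1-\tfrac{1}{2(r-1)}+\eps)n = (3/4+\eps)n$ for $r=3$) does \emph{not} apply directly; this is exactly why we must work harder and allow connectors of length $s > 1$ rather than $s=1$. The idea is to build a connector by chaining together a bounded number $s$ of "short" gadgets, each of which locally transports the role of one vertex to another, so that the colour-degree surplus of $n/3$ accumulates across the chain.

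First I would establish a one-step transference claim: given any two vertices $u,w$, there are $\Omega(n^2)$ pairs $(a,b)$ such that $\{u,a,b\}$ and $\{w,a,b\}$ are both rainbow triangles — equivalently $\{a,b\}$ spans a rainbow-$K_3$ with both $u$ and $w$. With $\delta^c(G)\ge (2/3+3\eps)n$ this should \emph{not} be achievable in one step for arbitrary $u,w$ (the extremal-type obstruction around $K_{n/2,n/2}$-like colourings blocks it), so instead I would prove a weaker "reachability" statement: define an auxiliary graph on $V(G)$ where $u \sim w$ if there are $\Omega(n^2)$ such common rainbow-extending pairs, show this auxiliary graph has large minimum degree (using the colour-degree condition together with Theorem~\ref{thm:rainbowK_3}/the conversion to digraphs to find rainbow triangles through prescribed vertices), and hence has bounded diameter. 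Then a path of length at most $s$ in the auxiliary graph, where $s$ is chosen after $\eps$ but before $\eta$, yields a connector: concatenating the $\Omega(n^2)$-sized gadget sets along the path gives $\Omega(n^{3s-1})$ connectors of length~$s$ after accounting for disjointness (the losses from requiring the $O(s)$ gadgets to be vertex-disjoint are only lower-order since $s$ is constant), which gives the required $\eta n^{3s-1}$ bound since $\eta \ll 1/s$.

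The main obstacle I expect is controlling the auxiliary reachability graph, i.e., proving it has no small "unreachable" components and that its diameter is bounded by some constant depending only on $\eps$. This is where the $2/3$ threshold must be used sharply: one needs that from \emph{any} vertex $u$, the set of vertices $w$ reachable in one step already has density bounded below by a constant, and then a standard expansion/diameter argument (à la the partition-into-closed-classes technique in the absorption literature of Lo--Markström and Han) closes the loop. Concretely I would likely argue by contradiction: if some pair $x,y$ had few connectors of every bounded length, the reachability structure would force a partition of $V(G)$ into two parts each of size $\ge \eps n$ with essentially no rainbow-triangle "bridges" between them, and then a counting argument on colour degrees across the cut (each colour class being a union of stars after passing to an edge-coloured critical subgraph, as in Proposition~\ref{prop:conversion}) would contradict $\delta^c(G) \ge (2/3+3\eps)n$. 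The routine parts — the counting lemma–style estimates for the number of disjoint gadget tuples and the Chernoff/union-bound bookkeeping — I would defer to the body of the proof.
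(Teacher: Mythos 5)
Your high-level strategy — build $(x,y;K_3)$-connectors by chaining $O(1)$-many short gadgets along a path in an auxiliary ``reachability'' graph, and establish that this auxiliary graph has small diameter using the colour-degree surplus — is aligned in spirit with the paper, but the critical step is left entirely unproven, and it is by far the hardest part. You defer the claim that the reachability graph has large minimum degree and bounded diameter to ``a standard expansion/diameter argument,'' but this is not standard: the paper's analogue is Lemma~\ref{lma:weaklyconnected} (every digraph with $\delta^+ > 2n/3$ is weakly $s$-connected), and proving it occupies the entire subsection on weak connectedness, with Propositions~\ref{prop:weaklyconnected1}--\ref{prop:connected} plus a delicate case analysis partitioning the vertex set into $Y,W,Z,T$ and double-counting in- and out-degrees across that partition. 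A sketch that punts on exactly this step has not engaged with the actual difficulty.

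There is also a structural mismatch. The paper does \emph{not} prove bounded ``diameter'' of any auxiliary graph on $V(G)$ directly — and it is not clear one can. In Lemma~\ref{lma:weaklyconnected} the connector length $s$ is allowed to grow with the number of vertices of the digraph ($1/s \ll 1/n$), which is useless at the vertex level. The paper's crucial move is to first apply the diregularity lemma (Lemma~\ref{lma:szemeredi}), pass to a reduced digraph $R$ on a bounded number $k \le N$ of clusters with $\delta^+(R) > 2k/3$, and only then invoke weak connectedness, so the length parameter $s_R$ is a function of $k$ and hence bounded. The counting lemma (Lemma~\ref{lma:counting}) and Corollary~\ref{cor:conversionK_3} then transfer each multiset-connector in $R$ to $\Omega(n^{3s_R-1})$ genuine vertex-disjoint rainbow connectors in $G$, and Lemma~\ref{lem:almostclosed} upgrades an almost-spanning closed set to full closedness. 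Your vertex-level approach would need to replicate the degree-structure case analysis of Lemma~\ref{lma:weaklyconnected} on $n$ vertices and somehow still extract a connector length that is $O_\eps(1)$; the proposal gives no indication of how to do that, and the $K_{n/2,n/2}$-type obstruction you yourself flag for one-step reachability persists for $O(1)$-step reachability unless one proves something with the force of the paper's weak-connectedness lemma.

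So: correct intuition about why $s > 1$ is needed and about relying on a connectivity-style argument, but there is a genuine gap at the core step, and the regularity-plus-reduced-graph scaffolding that makes the length parameter controllable is missing from your plan.
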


Thus our goal for this section is to show that every edge-coloured graph~$G$ on $n$ vertices with~$\delta^c(G) \ge (2/3 + 3\eps) n$ is $(s,\eta;K_3)$-closed. Since we are only considering rainbow-$K_3$-tilings in this section, in what follows we will simply say a graph is $(s,\eta)$-closed to mean it is $(s,\eta;K_3)$-closed.

We now sketch the proof of Lemma~\ref{lem:closed}.
Our proof has flavour of the lattice absoprtion of Mycroft and Keevash~\cite{KeevashMycroft2015} and of Han~\cite{Han2017}.
By Proposition~\ref{prop:conversion}, we may assume that $G$ is a digraph instead. 
Apply the regularity lemma, Lemma~\ref{lma:szemeredi}, and obtain a reduced graph~$R$ such that $\delta^+(R) \ge (2/3+\eps/2)|R|$. 
We then introduce a weaker notion of $(s,\eta)$-closed, which we call weakly connected (see Section~\ref{sec:weakconnected}). 
Intuitively, weakly connected only requires one connector between distinct vertices, and vertices in the connector can be repeated. 
After showing that the reduced graph is weakly connected (Lemma~\ref{lma:weaklyconnected}), we then use properties of regularity to show that there exists an almost spanning vertex set that is $(s',\eta')$-closed in the edge-coloured graph~$G$.
We conclude by showing that this implies the whole of~$G$ is $(s'',\eta'')$-closed.

\subsection{Weakly connectedness} \label{sec:weakconnected}

Let $G$ be a a digraph. 
Recall that $\mathcal{K}_{3,1}(G)$ is the set of all subdigraphs in $G$ on $3$ vertices with complete base graph and the minimum out-degree at least~$1$. 
We define the notion of \emph{weakly connectedness in $G$} as follows.\footnote{Note that connectedness is a property of edge-coloured graphs while weakly connectedness is a property of digraphs.}
A pair of vertices~$x$ and~$x'$ is \emph{weakly $s$-connected} if there exists a vertex multiset $W (x,x')$ such that $|W (x,x')| = 3s-1$ and both $\{x\} \cup W (x,x')$ and $\{x'\} \cup W (x,x')$ can be partitioned into $S_1, \dots, S_s$ with $G[S_i] \in \mathcal{K}_{3,1}(G)$ for all~$i \in [s]$. 
A subset~$U$ of~$V(G)$ is said to be \emph{weakly $s$-connected in~$G$} if every pair of vertices in $U$ are weakly $s$-connected to each other.
If $V(G)$ is weakly $s$-connected in $G$ then we simply say that \emph{$G$ is weakly $s$-connected}.

Here are some simple properties of weakly $s$-connectedness, for which we omit the proof.

\begin{fact} \label{fact:weaklyconnected}
Let $G$ be a digraph. 
Then the following holds:
\begin{enumerate}[label={\rm (\roman*)}]
	\item if $U, W \subseteq V(G)$ are weakly $s$-connected in~$G$ and $U \cap W \ne \emptyset$, then $U \cup W$ is weakly $2s$-connected in~$G$; 
	\label{itm:wc3}
	\item if $x,x',y,y',z,z' \in V(G)$ are such that $\delta^+(G[xyz]), \delta^+(G[x'y'z']) \ge 1$, $y$ and $y'$ are weakly $s$-connected and $z$ and $z'$ are weakly $s$-connected, then $x$ and $x'$ are  weakly $(2s+1)$-connected;\label{itm:wc4}
	\item if $U \subseteq V(G)$ is weakly $s$-connected in~$G$, $\mathcal{K}_{3,1}(G[U]) \ne \emptyset$, $x \in V(G)$ and $\delta^+(G[xuu']) \ge 1$ for some $u,u' \in U$, then $U \cup \{x\}$ is weakly $(3s+2)$-connected in~$G$. \label{itm:wc5}
\end{enumerate}
\end{fact}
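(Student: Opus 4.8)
\textbf{Proof plan for Fact~\ref{fact:weaklyconnected}.}

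The plan is to verify each of the three properties by explicitly concatenating the ``connectors'' (vertex multisets) promised by the definition of weak connectedness, being careful about sizes. The underlying principle throughout is that if $A$ and $B$ are vertex multisets that can each be partitioned into copies of members of $\mathcal{K}_{3,1}(G)$, then so can $A \uplus B$ (disjoint union of multisets), and the number of parts adds; weak connectedness just says such a certificate exists for both $\{x\} \uplus W$ and $\{x'\} \uplus W$ simultaneously.

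For~\ref{itm:wc3}, pick any $w \in U \cap W$. Given $x, x' \in U \cup W$, if both lie in $U$ (or both in $W$) we already have a weakly $s$-connecting multiset. Otherwise say $x \in U$, $x' \in W$. Take $W_1 = W(x, w)$ witnessing that $x$ and $w$ are weakly $s$-connected in $U$, and $W_2 = W(w, x')$ witnessing that $w$ and $x'$ are weakly $s$-connected in $W$. Set $W(x,x') := W_1 \uplus \{w\} \uplus W_2 \setminus \{\text{one copy of } w\}$ — more precisely, I would take the multiset $W_1 \uplus W_2 \uplus \{w\}$, which has size $(3s-1) + (3s-1) + 1 = 6s - 1 = 3(2s) - 1$, and observe that $\{x\} \uplus W_1$ partitions into $s$ copies while $\{w\} \uplus W_2$ partitions into $s$ copies, so $\{x\} \uplus (W_1 \uplus \{w\} \uplus W_2)$ partitions into $2s$ copies; symmetrically $\{x'\} \uplus W_2$ together with $\{w\} \uplus W_1$ gives $2s$ copies of $\{x'\} \uplus (W_1 \uplus \{w\} \uplus W_2)$. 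Hence $x$ and $x'$ are weakly $2s$-connected in $U \cup W$.

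For~\ref{itm:wc4}, let $W_y = W(y,y')$ and $W_z = W(z,z')$ be the two connecting multisets, each of size $3s-1$. Form the multiset $W(x,x') := W_y \uplus W_z \uplus \{y, z\}$, which has size $2(3s-1) + 2 = 6s = 3(2s+1) - 1$. Now $\{x\} \uplus W(x,x') = \{x,y,z\} \uplus W_y \uplus W_z$: the triple $\{x,y,z\}$ forms a member of $\mathcal{K}_{3,1}(G)$ since $\delta^+(G[xyz]) \ge 1$, the multiset $\{y\} \uplus W_y$ partitions into $s$ copies, and $\{z\} \uplus W_z$ partitions into $s$ copies — wait, that double-counts $y$ and $z$. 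I should instead take $W(x,x') := W_y \uplus W_z$ augmented by the single vertices needed: since $\{y\}\uplus W_y$ and $\{y'\}\uplus W_y$ each partition into $s$ copies, and likewise for $z$, I set $W(x,x') = W_y \uplus W_z \uplus \{y,z\}$ and check $\{x\}\uplus W(x,x') = \{x,y,z\}\;\uplus\;(W_y)\;\uplus\;(W_z)$ does not by itself tile unless $W_y, W_z$ alone tile, which they do not; the correct bookkeeping is that $\{x\}\uplus W(x,x')$ contains the triple $xyz$, plus $W_y$ which together with a copy of $y$ is not available. The clean fix: define $W(x,x') := W_y \uplus W_z \uplus \{y,z\}$; then $\{x\}\uplus W(x,x')$ has the $s$ copies from the $\{y\}\uplus W_y$ tiling, the $s$ copies from the $\{z\}\uplus W_z$ tiling, and one extra copy $\{x,y,z\}$ — but $y,z$ are now used three times total while appearing only once each in the multiset, so I must put \emph{two} extra copies of $y$ and of $z$ into $W(x,x')$. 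Thus $W(x,x') := W_y \uplus W_z \uplus \{y,y,z,z\}$ of size $6s+2 = 3(2s+1)-1$. Then $\{x\}\uplus W(x,x') = \{x,y,z\} \uplus (\{y\}\uplus W_y) \uplus (\{z\}\uplus W_z)$ as multisets, giving $1 + s + s = 2s+1$ parts; symmetrically for $x'$ using $\{y'\}\uplus W_y$, $\{z'\}\uplus W_z$, and the triple $\{x',y',z'\}$. This is the argument; I expect getting this multiset bookkeeping exactly right to be the only delicate point.

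For~\ref{itm:wc5}, I iterate the idea of~\ref{itm:wc4} along the whole of $U$. Pick a copy $K_0 \in \mathcal{K}_{3,1}(G[U])$, and pick $u, u' \in U$ with $\delta^+(G[xuu']) \ge 1$. Roughly, given any target $w \in U \cup \{x\}$, I connect it to $x$ by routing through $u$ (or $u'$): use weak $s$-connectedness inside $U$ to connect $w$ to $u$ (if $w \in U$; if $w = x$ this is trivial), then use the triple $\{x,u,u'\}$ together with a spare copy of $K_0$ to absorb the parity, exactly as in~\ref{itm:wc4}. Tracking the size: one $s$-connector of length $3s-1$, plus a constant number of extra full triples ($u$, $u'$, and a copy of $K_0$, each possibly taken twice), yields a connector of length $3(3s+2)-1$. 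I would state ``the verification is identical in spirit to~\ref{itm:wc4}'' rather than redo it. The main obstacle in all three parts is purely the accounting — ensuring every vertex that appears inside a listed partition-into-triples actually occurs in the multiset with the right multiplicity — and the stated constants $2s$, $2s+1$, $3s+2$ are exactly what this accounting produces.
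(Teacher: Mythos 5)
The paper omits the proof of this fact, so I can only assess your argument on its own terms. Your overall strategy — concatenating the multiset witnesses and tracking multiplicities — is exactly the right idea, and your treatment of part~(i) is essentially correct (modulo the same-side case, where two vertices both in $U$, say, are already weakly $s$-connected but you still need to pad their witness with $s$ further triples from $\mathcal{K}_{3,1}(G)$ to reach length $2s$; such triples exist, e.g.\ take any part $S_1$ of an existing partition). But there is a genuine error in your construction for part~(ii).

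You set $W(x,x') := W_y \uplus W_z \uplus \{y,y,z,z\}$. The $\{x\}$-side then tiles as you describe, as $\{x\} \uplus W(x,x') = \{x,y,z\} \uplus (\{y\} \uplus W_y) \uplus (\{z\} \uplus W_z)$. However the $\{x'\}$-side does \emph{not} tile: the partition you propose is $\{x',y',z'\} \uplus (\{y'\} \uplus W_y) \uplus (\{z'\} \uplus W_z)$, whose multiset union is $\{x'\} \uplus \{y',y',z',z'\} \uplus W_y \uplus W_z$, which would force $\{y,y,z,z\} = \{y',y',z',z'\}$ — false in general. The point is that the two endpoints of the new connector need \emph{asymmetric} surplus vertices: the $\{x\}$-side wants extra copies of $y,z$ (for the triple $xyz$) together with $y',z'$ (to close off the two sub-connectors), whereas the $\{x'\}$-side wants the complementary arrangement. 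The correct multiset is
\[
W(x,x') := \{y,z,y',z'\} \uplus W_y \uplus W_z ,
\]
still of size $4 + 2(3s-1) = 6s+2 = 3(2s+1)-1$. Then $\{x\} \uplus W(x,x') = \{x,y,z\} \uplus (\{y'\} \uplus W_y) \uplus (\{z'\} \uplus W_z)$ and $\{x'\} \uplus W(x,x') = \{x',y',z'\} \uplus (\{y\} \uplus W_y) \uplus (\{z\} \uplus W_z)$, each with $2s+1$ parts, as required. Since your part~(iii) is only a sketch deferring to the (flawed) bookkeeping of part~(ii), it inherits the same gap; you would need to carry out the construction explicitly (e.g.\ by first applying the corrected part~(ii) to connect $x$ to a vertex of a triple $K_0 \in \mathcal{K}_{3,1}(G[U])$, then concatenating with an $s$-connector inside $U$ and padding).
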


The aim of this subsection is to show that if $\delta^+(G) > 2n/3$, then $G$ is weakly connected.

\begin{lemma} \label{lma:weaklyconnected}
Let $1/s \ll 1/n$.
Let $G$ be a digraph on $n$ vertices with $\delta^+(G) > 2n/3 $.
Then $G$ is weakly $s$-connected. 
\end{lemma}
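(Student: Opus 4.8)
The goal is to show that any digraph $G$ on $n$ vertices with $\delta^+(G) > 2n/3$ is weakly $s$-connected for some absolute constant $s$. My plan is to proceed by induction on $n$ (or, since $s$ is a fixed constant, to argue directly) and to exploit the large out-degree to produce, for any two vertices $x, x'$, a short "chain" of members of $\mathcal{K}_{3,1}(G)$ linking them, using parts \ref{itm:wc3}--\ref{itm:wc5} of Fact~\ref{fact:weaklyconnected} as the gluing tools. The first observation is that $\delta^+(G) > 2n/3$ forces $G^\pm$ to be quite dense: for any $v$, $|N^+(v) \cap N^-(v)| \ge 2\delta^+(G) - n > n/3$, and more generally any pair $u,v$ has $|N^+(u) \cap N^+(v)| > n/3$. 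In particular every vertex lies in many cyclic triangles, so $\mathcal{K}_{3,1}(G[U]) \ne \emptyset$ for every $U$ with $|U| > 2n/3$; this will let us apply \ref{itm:wc5} freely.

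The core step is to establish that all but a bounded number of vertices lie in one common weakly $s_0$-connected set, for some small $s_0$. To do this I would first find, for each vertex $x$, its "neighbourhood-of-triangles" — the set of vertices $y$ such that $x$ and $y$ lie together in a common cyclic triangle (equivalently $y \in N^+(x)$ with $N^+(y) \cap N^-(x) \ne \emptyset$, or the reverse). Using $\delta^+ > 2n/3$ one checks this set has size $> 2n/3$ as well. Then for any two vertices $x, x'$: pick $z \in N^+(x) \cap N^+(x')$ with $|N^+(x) \cap N^+(x')| > n/3$, and within this common out-neighbourhood find a cyclic triangle through $z$ together with $x$ (using density of $G^\pm$ restricted there), and similarly one through $z$ together with $x'$. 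This realizes the hypothesis of \ref{itm:wc4} with a common "$z = z'$" and gives weak connectivity directly — but one must also handle the "$y$" coordinate, which I would arrange by choosing the triangle vertices to themselves be weakly connected to each other, bootstrapping from pairs already known to be connected. More cleanly: I would show that any two vertices $x, x'$ with a common out-neighbour $z$ that sits inside a cyclic triangle with each of them are weakly $3$-connected (the multiset $W(x,x')$ being the two triangles' non-$x$/$x'$ vertices padded appropriately to size $3 \cdot 2 - 1$), and then observe that since common out-neighbourhoods are all $> n/3$ and every vertex is in such triangles, the "weakly $O(1)$-connected" relation is in fact complete on $V(G)$, possibly after one more application of \ref{itm:wc3} to merge two overlapping large connected sets.

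The main obstacle I anticipate is the bookkeeping in the definition of weak connectedness: the multiset $W(x,x')$ must have size exactly $3s-1$ and \emph{both} $\{x\} \cup W(x,x')$ \emph{and} $\{x'\} \cup W(x,x')$ must partition into members of $\mathcal{K}_{3,1}(G)$ simultaneously — so the "link" cannot just be a path of triangles from $x$ to $x'$; it has to be a structure symmetric in the roles of $x$ and $x'$. The trick (this is exactly what Fact~\ref{fact:weaklyconnected}\ref{itm:wc4} encodes) is to find triangles $xyz$ and $x'y'z'$ with $\delta^+ \ge 1$ and then recursively link $y \leftrightarrow y'$ and $z \leftrightarrow z'$; the recursion bottoms out because of the density facts above, which guarantee one can always choose $y,z$ and $y',z'$ to be \emph{equal} or already linked. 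I would set up a clean base case — e.g.\ "$x$ and $x'$ share a common out-neighbour that completes to cyclic triangles with both" $\implies$ weakly $c$-connected for a small explicit $c$ — verify it by hand using the size-$>n/3$ intersection bounds, and then show the base-case hypothesis holds for \emph{every} pair, so no recursion or induction is actually needed and $s$ can be taken to be that explicit constant $c$. The size padding (to hit exactly $3s-1$) is handled by throwing in extra disjoint cyclic triangles from the dense part of $G$, which exist since $\delta^+(G[U]) > 0$ for all large $U$.
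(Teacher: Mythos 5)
Your opening density observation, $|N^+(v) \cap N^-(v)| \ge 2\delta^+(G) - n > n/3$, is already false in general: the hypothesis controls only the minimum out-degree, so while the \emph{average} in-degree exceeds $2n/3$, an individual vertex can have arbitrarily small in-degree, in which case $|N^+(v) \cap N^-(v)| \ge d^+(v)+d^-(v)-n$ gives nothing. This is not a cosmetic slip; the low-in-degree vertices are precisely what make this lemma nontrivial.

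The proposed base case is the real gap. You want, for every pair $x,x'$, a common pair $\{y,z\}$ with $G[xyz], G[x'yz] \in \mathcal{K}_{3,1}(G)$, so that Fact~\ref{fact:weaklyconnected}\ref{itm:wc4} fires with $y=y'$ and $z=z'$. From $|N^+(x)\cap N^+(x')| > n/3$ and $\delta^+(G) > 2n/3$ one only gets that the common out-neighbourhood $U$ has $\delta^+(G[U]) > 0$, which yields an \emph{oriented} edge $y\to z$ inside $U$; for both triangles to have minimum out-degree at least $1$ you then need either $z\to y$ (a double edge in $U$) or $z\in N^-(x)\cap N^-(x')$. Neither is guaranteed: when $|U|$ is close to $n/3$, $\delta^+(G[U])$ can be as small as $1$ and $e(G^{\pm}[U])$ can be zero, while $N^-(x)\cap N^-(x')$ is not controlled at all since there is no pointwise lower bound on in-degree. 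Without a verified base case the recursion via \ref{itm:wc4} has nowhere to bottom out. Note also that the lemma's hypothesis is $1/s \ll 1/n$ (so $s$ may grow much faster than $n$), which is itself a signal that a constant-length connection is not what the proof produces.

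The paper's argument has a genuinely different shape: an extremal-set expansion argument, not a direct linking argument. It first chooses $x_0$ with $d^-(x_0) = \Delta^-(G) \ge \delta n$ (by averaging) and applies Proposition~\ref{prop:weaklyconnected1} to show $N(x_0)$ is weakly connected. It then lets $X$ be the maximal set weakly connected to $N(x_0)$, partitions $V(G)$ into $Y = V(G)\setminus X$ and the three subclasses $W, Z, T$ of $X$ according to how their neighbourhoods meet $Y$, proves upper bounds on $d^-(v)$ for each class (Claims~\ref{clm:YWZT} and~\ref{clm:YWZT2}), and derives a contradiction from the identity $\sum_v d^-(v) = \delta n^2$ unless $Y = \emptyset$. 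Each expansion step invokes Fact~\ref{fact:weaklyconnected} and inflates $s$ geometrically, which is why $s$ cannot be taken constant. Your short-chain plan would need, at a minimum, a worked-out base case that survives low-in-degree vertices, and as written I do not see one.
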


Note that $s$ is a function of~$n$. 
However, we will not track $s$ explicitly, and simply write weakly connected. 
This is because we will apply Lemma~\ref{lma:weaklyconnected} to the reduced graph which has a constant number of vertices. 
By Fact~\ref{fact:weaklyconnected}, weakly connected is an ``equivalent relationship''. 
We first show that if a vertex $x \in V(G)$ has large in-degree, then its neighbourhood is weakly connected. We will denote by $N^\pm(v)$ and $d^\pm(v)$ the neighbourhood and degree of $v$ in $G^\pm$ respectively.

\begin{proposition} \label{prop:weaklyconnected1}
Let $G$ be a digraph on $n$ vertices with $\delta^+(G) =\delta n $. 
Let $x \in V(G)$ with $d^-(x) > 2( 1 - \delta ) n $.
Then $N(x)$ is weakly connected. 
\end{proposition}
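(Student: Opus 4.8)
The plan is to prove the (formally stronger) statement that $N^+(x)$ is weakly $1$-connected, which of course suffices. Fix two distinct vertices $u,u' \in N^+(x)$. According to the definition, we must exhibit a two-element vertex multiset $W(u,u')$, disjoint from $\{u,u'\}$, such that both $\{u\}\cup W(u,u')$ and $\{u'\}\cup W(u,u')$ induce members of $\mathcal{K}_{3,1}(G)$.

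The natural candidate is $W(u,u') = \{x,w\}$, where $w$ is a common out-neighbour of $u$ and of $u'$ that is moreover an in-neighbour of $x$. Indeed, if such a $w$ exists then $x\to u\to w\to x$ and $x\to u'\to w\to x$ are cyclic triangles, so both $G[\{u,x,w\}]$ and $G[\{u',x,w\}]$ have complete base graph and minimum out-degree at least $1$; that is, both lie in $\mathcal{K}_{3,1}(G)$. Since $w\in N^-(x)$ and $w\in N^+(u)\cap N^+(u')$, and since $x\notin N^+(x)$, the pair $\{x,w\}$ consists of two vertices distinct from each other and from $u,u'$, so it is a legitimate connector.

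It remains only to produce such a $w$, and this is precisely the point at which the hypotheses enter. By inclusion--exclusion inside the vertex set of size $n$,
\[
|N^+(u)\cap N^+(u')\cap N^-(x)| \;\ge\; d^+(u)+d^+(u')+d^-(x)-2n \;\ge\; 2\delta n + d^-(x) - 2n \;>\; 0,
\]
where the first inequality on $d^+(u),d^+(u')$ uses $\delta^+(G)=\delta n$, and the final strict inequality uses exactly the assumption $d^-(x)>2(1-\delta)n$. So a suitable $w$ exists, $u$ and $u'$ are weakly $1$-connected, and since $u,u'$ were arbitrary, $N^+(x)$ is weakly $1$-connected.

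I do not expect a genuine obstacle here: the whole argument collapses to the single requirement that the triple intersection above be non-empty, and the bound $d^-(x)>2(1-\delta)n$ is calibrated to force this. (For contrast, with only a weaker lower bound on $d^-(x)$ these triple intersections could be empty, and one would instead have to chain together several $1$-connectors through auxiliary vertices by repeated use of Fact~\ref{fact:weaklyconnected}, ending up with a larger but still bounded $s$; the stated hypothesis sidesteps this and yields $s=1$ outright.)
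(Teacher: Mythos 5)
Your argument correctly shows that $N^+(x)$ is weakly $1$-connected, and this part coincides with the first half of the paper's proof. But the opening claim that proving this ``of course suffices'' is where the gap lies: the proposition asserts that $N(x)$ is weakly connected, and here $N(x) = N^+(x) \cup N^-(x)$ (this reading is forced by how $N(\cdot)$ is used later in the proof of Lemma~\ref{lma:weaklyconnected}, e.g.\ the deduction ``$Y \cap N(z) \ne \emptyset$ as $z \in N^+(Y)$''). You have not said anything about the vertices of $N^-(x)$ that are not out-neighbours of $x$, and your $C_3$-based connector does not reach them: for $z \in N^-(x) \setminus N^+(x)$ the set $\{x,w\}$ with $w \in N^+(z) \cap N^-(x)$ does not make $G[\{z,x,w\}]$ a member of $\mathcal{K}_{3,1}(G)$, because the edge from $x$ to $z$ need not exist.

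The missing half requires a genuinely different connector. The paper first observes $d^{\pm}(x) \ge d^+(x)+d^-(x)-n > (1-\delta)n > 0$, so $N^{\pm}(x) = N^+(x)\cap N^-(x) \ne \emptyset$, and moreover for any $z \in N^-(x)$ one has $N^+(z) \cap N^{\pm}(x) \ne \emptyset$. Picking such a $y \in N^{\pm}(x) \cap N^+(z)$ gives a $J_3$-type copy on $\{x,y,z\}$ (edges $zx$, $xy$, $yx$, $zy$), and doing this for two vertices $z,z' \in N^-(x)$ produces $y,y' \in N^{\pm}(x) \subseteq N^+(x)$, which you have already shown to be weakly connected. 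Fact~\ref{fact:weaklyconnected}\ref{itm:wc4} then upgrades this to $z$ and $z'$ being weakly connected, so $N^-(x)$ is weakly connected. Finally, since $N^+(x) \cap N^-(x) = N^{\pm}(x) \ne \emptyset$, Fact~\ref{fact:weaklyconnected}\ref{itm:wc3} glues the two pieces into weak connectedness of $N(x)$. You should add these steps; without them the proof only establishes a strict subset of the claim.
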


\begin{proof}
First we show that $N^+(x)$ is weakly connected. 
For $y,y' \in N^+(x)$, 
\begin{align*}
	|N^+(y) \cap N^+(y') \cap N^-(x) |  \ge d^+(y) + d^+(y')+d^-(x) - 2n  >0.
\end{align*}
Let $z \in N^+(y) \cap N^+(y') \cap N^-(x) $.
Each of $G[xyz]$ and $G[xy'z]$ contains a copy of $C_3$, implying that $y$ and $y'$ are  weakly connected. 
Thus $N^+(x)$ is weakly connected.

Note that $d^{\pm}(x) >  d^+(x) + d^-(x) - n  > (1- \delta) n$. 
For all $z \in  N^-(x)$, we have $N^+(z) \cap N^{\pm}(x) \ne\emptyset$.
Given any two $z, z' \in N^-(x)$, there exist $y,y' \in N^{\pm} (x)$ such that $x, y, z$ and $x, y', z'$ span a copy of~$J_3$.
Recall that $y$ and $y'$ is  weakly connected (as $y,y' \in  N^{\pm} (x) \subseteq N^+(x)$), so $z$ and $z'$ is  weakly connected by Fact~\ref{fact:weaklyconnected}\ref{itm:wc4}.
Hence $N^-(x)$ is  weakly connected. 
Since $N^-(x) \cap N^+(x) = N^{\pm}(x) \ne \emptyset$, the result follows by Fact~\ref{fact:weaklyconnected}\ref{itm:wc3}.
\end{proof}

\begin{proposition} \label{prop:weaklyconnected2}
Let $G$ be a digraph on $n$ vertices with $\delta^+(G) = \delta n $. 
Suppose that $X \subseteq V(G)$ is weakly connected and $\mathcal{K}_{3,1}(G[X]) \ne \emptyset$. 
Let $x \in X$ be such that $d^-(x) > (2 - \delta) n  -|X|$.
Then $N^+(x) \cup X$ is weakly connected. 
\end{proposition}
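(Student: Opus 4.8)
\textbf{Proof plan for Proposition~\ref{prop:weaklyconnected2}.}
The plan is to mimic the structure of the proof of Proposition~\ref{prop:weaklyconnected1}, but using the hypothesis that $X$ is already weakly connected as the ``backbone'' to which we attach $N^+(x)$. First I would handle the vertices $y \in N^+(x)$ one at a time and show each is weakly connected to some fixed vertex of $X$. The key counting step is the analogue of the intersection estimate: for $y \in N^+(x)$ we want to find a vertex $z \in X$ such that $G[xyz]$ has minimum out-degree at least $1$ — ideally $z \in N^+(y) \cap N^-(x) \cap X$, so that $x y z$ spans a copy of $C_3$. Since $|N^+(y) \cap N^-(x)| \ge d^+(y) + d^-(x) - n > \delta n + (2-\delta)n - |X| - n = n - |X|$, this set together with $X$ cannot be disjoint, so such a $z \in X$ exists. (If instead only $G[xyz]$ having $\delta^+ \ge 1$ is needed rather than a full $C_3$, the same count works with room to spare.) Thus every $y \in N^+(x)$ is weakly connected to a vertex of $X$, and since $X$ is itself weakly connected, Fact~\ref{fact:weaklyconnected}\ref{itm:wc3} (applied after noting $\{y,z\}$-type two-element connected sets meet $X$) lets us merge everything: $N^+(x) \cup X$ is weakly connected, with the connectivity parameter blowing up by a bounded factor, which is irrelevant since we do not track $s$.

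More carefully, I would argue as follows. Since $\mathcal{K}_{3,1}(G[X]) \ne \emptyset$ and $X$ is weakly connected, it suffices by Fact~\ref{fact:weaklyconnected}\ref{itm:wc5} (or by iterating \ref{itm:wc3}) to show that each individual $y \in N^+(x)$ lies in a weakly connected set meeting $X$. Fix $y \in N^+(x)$ and pick $z \in N^+(y) \cap N^-(x) \cap X$ as above. Then $x,y,z$ span a $C_3$ (edges $xy, yz, zx$), so in particular $\{y\}$ and $\{z\}$ can each be completed by the common multiset $W(y,z) := \{x\} \cup (\text{rest of a } \mathcal{K}_{3,1}\text{-partition})$ — but the cleanest route is simply to observe that $G[xyz] \in \mathcal{K}_{3,1}(G)$, so $y$ and $z$ are weakly $1$-connected via the witness multiset $\{x\}$ together with... actually here the cleanest is to invoke that $\{y, z, x\}$ itself is a single $\mathcal{K}_{3,1}$, giving weak $1$-connectedness of $y$ and $z$ directly, hence $y$ is weakly connected to a vertex of $X$. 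Now $X \cup \{y\}$ is weakly connected by Fact~\ref{fact:weaklyconnected}\ref{itm:wc3}, and ranging over all $y \in N^+(x)$ and applying \ref{itm:wc3} repeatedly (each new $y$ attaches through its common vertex, or through any already-attached vertex) yields that $X \cup N^+(x)$ is weakly connected.

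The main obstacle, such as it is, is bookkeeping rather than mathematics: one must be slightly careful that weak $s$-connectedness is genuinely an equivalence-type relation allowing this kind of union (which is exactly what Fact~\ref{fact:weaklyconnected}\ref{itm:wc3}--\ref{itm:wc5} is for), and that the degree hypothesis $d^-(x) > (2-\delta)n - |X|$ is used with the right slack so that $N^+(y) \cap N^-(x)$ genuinely forces a vertex inside $X$ — the inequality $d^+(y) + d^-(x) > n - |X| + |X| = n$ is false in general, so one really does need $|N^+(y) \cap N^-(x)| > n - |X|$, i.e. $d^+(y) + d^-(x) - n > n - |X|$, which rearranges to $d^-(x) > 2n - |X| - d^+(y) \ge 2n - |X| - \delta n$ since $d^+(y) \le n$... wait, that needs $d^+(y)$ large; using $d^+(y) \ge \delta n$ gives $|N^+(y)\cap N^-(x)| \ge \delta n + d^-(x) - n > \delta n + (2-\delta)n - |X| - n = n - |X|$, so indeed it exceeds $|V(G) \setminus X| = n - |X|$ and must hit $X$. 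So the hypothesis is exactly tight for this argument, which is the one real point to get right; everything else is a routine application of Fact~\ref{fact:weaklyconnected}.
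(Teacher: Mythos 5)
Your degree counting and overall plan match the paper's proof: you correctly compute that $|N^+(y)\cap N^-(x)\cap X|>0$ (the paper writes the equivalent bound $\delta^+(G)+d^-(x)+|X|-2n>0$), and you correctly identify that the resulting vertex $x'\in N^+(y)\cap N^-(x)\cap X$ makes $G[xx'y]$ a $C_3$. However, there is a genuine gap in the step where you claim that ``$\{y,z,x\}$ itself is a single $\mathcal{K}_{3,1}$, giving weak $1$-connectedness of $y$ and $z$ directly.'' That is false. Weak $1$-connectedness of $y$ and $z$ requires a \emph{common} multiset $W$ of size $2$ such that both $\{y\}\cup W$ and $\{z\}\cup W$ lie in $\mathcal{K}_{3,1}(G)$; a single triangle on $\{y,z,x\}$ does not supply such a $W$ (taking $W=\{z,x\}$ degenerates the second copy, and taking $W=\{y,x\}$ degenerates the first). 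So the chain ``$y$ is weakly connected to a vertex of $X$, now apply Fact~\ref{fact:weaklyconnected}\ref{itm:wc3}'' is not actually established.

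The correct step — which is what the paper does and which you gesture at but do not carry out — is to apply Fact~\ref{fact:weaklyconnected}\ref{itm:wc5} directly with $U=X$: since $x,x'\in X$, $X$ is weakly connected, $\mathcal{K}_{3,1}(G[X])\ne\emptyset$ (the hypothesis, which your ``cleanest route'' never uses), and $\delta^+(G[yxx'])\ge1$, part~\ref{itm:wc5} immediately yields that $X\cup\{y\}$ is weakly connected; you then merge over all $y\in N^+(x)$ with Fact~\ref{fact:weaklyconnected}\ref{itm:wc3}. Note that \ref{itm:wc5} is exactly the mechanism that converts a single $\mathcal{K}_{3,1}$ with two vertices in an already-closed set into a weak connection of the third vertex, and it is why the hypothesis $\mathcal{K}_{3,1}(G[X])\ne\emptyset$ appears in the statement; trying to shortcut this by asserting weak $1$-connectedness from the triangle alone is where the argument breaks.
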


\begin{proof}
Consider any $y \in N^+(x)$. 
Note that 
\begin{align*}
 |N^+(y) \cap N^-(x) \cap X| \ge \delta^+(G) + d^-(x) + |X| - 2n > 0.
\end{align*}
For each $x' \in N^+(y) \cap N^-(x) \cap X$, $G[xx' y]$ forms a $C_3$. Fact~\ref{fact:weaklyconnected}\ref{itm:wc5} implies $\{y\} \cup X$ is weakly connected. Since this holds for all $y \in N^+(x)$, we are done by Fact~\ref{fact:weaklyconnected}\ref{itm:wc3}.
\end{proof}

\begin{proposition} \label{prop:connected}
Let $G$ be a digraph on $n$ vertices with $\delta^+(G) =\delta n $.
Suppose that $X \subseteq V(G)$ is weakly connected and $\mathcal{K}_{3,1}(G[X]) \ne \emptyset$. 
Let $z \in V(G)$ be such that $d^{+}(z, X) \ge 2 (1-\delta) n $.
Then $\{z\} \cup X$ is weakly connected. 
\end{proposition}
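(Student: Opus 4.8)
The plan is to mimic the strategy of Propositions~\ref{prop:weaklyconnected1} and~\ref{prop:weaklyconnected2}: find, for each vertex $z' \in N^+(z,X)$, a common neighbour inside $X$ that lets us place $z$ into a copy of $\mathcal{K}_{3,1}(G)$ together with two vertices of $X$, so that Fact~\ref{fact:weaklyconnected}\ref{itm:wc5} makes $\{z\} \cup X$ weakly connected once we know that many vertices of $X$ are reached this way. The subtlety compared with Proposition~\ref{prop:weaklyconnected2} is that we are given a large \emph{out}-neighbourhood of $z$ inside $X$ rather than a large in-degree, so we should build the triangle through the arc $zz'$ (rather than $z'z$), finding a vertex $x' \in X$ with $z' x'$ and $x' z$ both arcs of $G$; then $G[z z' x']$ has $z \to z'$, $z' \to x'$, $x' \to z$, a cyclic triangle in $\mathcal{K}_{3,1}(G)$, with $z', x' \in X$.

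\textbf{Key steps.} First, set $W := N^+(z,X)$, so $|W| \ge 2(1-\delta)n$. For a fixed $z' \in W$, count $|N^+(z') \cap N^-(z) \cap X|$. Using $|N^+(z')| \ge \delta n$, $|N^-(z)| \ge$ (note: we are not given a lower bound on $d^-(z)$ directly, but $d^-(z) \ge \delta^+(G) + \text{stuff}$—actually here is the first thing to check carefully: in the earlier propositions a hypothesis on $d^-(x)$ was imposed, whereas here only $d^+(z,X)$ is assumed, so I would instead intersect within $X$ and use $|N^-(z) \cap X| \ge |N^+(z',X)|$-type bounds or, more robustly, note that every $z' \in W$ has $|N^+(z') \cap X| \ge |X| - (1-\delta)n$ and hence, since $|W|$ itself is large, a counting/averaging argument over pairs $(z', x')$ with $z' \in W$, $x' \in X$, $z'x' \in E(G)$, $x'z \in E(G)$ will show that for all but few $z' \in W$ there is such an $x'$). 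Concretely, for $z' \in W$ we want $N^+(z') \cap N^-(z) \cap X \ne \emptyset$; since $|N^+(z') \cap X| \ge |X| - (1-\delta)n$ and $|N^-(z) \cap X| \ge |X| - (1-\delta)|X|$-type estimates are not automatic, I would instead directly use that $W \subseteq N^+(z,X)$ forces $|N^-(z)\cap X| \ge |W| \ge 2(1-\delta)n$, so $|N^+(z') \cap N^-(z) \cap X| \ge \delta n + 2(1-\delta)n + |X| - 2n - (\text{slack}) > 0$ provided $|X|$ is not too small; if $\mathcal{K}_{3,1}(G[X]) \ne \emptyset$ then $|X| \ge 3$, but that is too weak, so one likely needs the genuine bound that $X$ is large, which follows because $W \subseteq X$ gives $|X| \ge 2(1-\delta)n$. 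With $|X| \ge 2(1-\delta)n$ the inequality $\delta n + 2(1-\delta)n + 2(1-\delta)n - 2n > 0$ becomes $2(1-\delta)n > 0$, which holds. Thus for every $z' \in W$ there is $x' \in X$ with $G[z z' x']$ a cyclic triangle, so by Fact~\ref{fact:weaklyconnected}\ref{itm:wc5} (applied with $u = z'$, $u' = x'$, both in $X$), $\{z\} \cup X$ is weakly connected.

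\textbf{Main obstacle.} The one place that needs care is justifying the lower bound on $|X|$ (equivalently, on $|N^-(z)\cap X|$) that makes the triple intersection nonempty: naively $\mathcal{K}_{3,1}(G[X]) \ne \emptyset$ only gives $|X| \ge 3$, which is useless, so I must extract a genuinely linear bound. The clean route is the observation $N^+(z,X) \subseteq X$, so $|X| \ge |N^+(z,X)| \ge 2(1-\delta)n$; this is exactly the slack needed. Once that is in hand, a single application of $|A \cap B \cap C| \ge |A| + |B| + |C| - 2n$ does the job, and the rest is an immediate invocation of Fact~\ref{fact:weaklyconnected}\ref{itm:wc5} together with (if one wants $\{z\}\cup X$ rather than a single triple to do the work) the trivial fact that a single witnessing triple already suffices for~\ref{itm:wc5}. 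So the proof should be short, the only real content being the counting inequality and the size bound $|X| \ge 2(1-\delta)n$.
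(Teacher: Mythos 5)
Your approach builds the wrong kind of triangle and in doing so runs into a genuine gap. You aim to place $z$ into a cyclic triangle $z \to z' \to x' \to z$ with $z', x' \in X$, which requires finding $x' \in N^+(z') \cap N^-(z) \cap X$. The crucial quantity is therefore $|N^-(z)|$ (or $|N^-(z)\cap X|$), and here the argument fails: the hypothesis only gives $d^+(z,X) \ge 2(1-\delta)n$, a lower bound on out-neighbours of $z$ in $X$, and says nothing about $d^-(z)$. Your claim that ``$W \subseteq N^+(z,X)$ forces $|N^-(z)\cap X| \ge |W|$'' confuses in- and out-neighbourhoods: $z' \in N^+(z)$ means $z \in N^-(z')$, not $z' \in N^-(z)$. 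Without a lower bound on $d^-(z)$, the intersection $N^+(z')\cap N^-(z)\cap X$ can be empty, so the desired cyclic triangle need not exist. (You correctly flagged this concern at the start --- ``we are not given a lower bound on $d^-(z)$ directly'' --- but the workaround is erroneous.)

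The paper avoids $N^-(z)$ entirely by choosing a non-cyclic member of $\mathcal{K}_{3,1}$ that uses only out-arcs from $z$. Take $Z \subseteq N^+(z,X)$ of size exactly $2(1-\delta)n$; then $\delta^+(G[Z]) \ge \delta^+(G) + |Z| - n = (1-\delta)n = |Z|/2$, so $\sum_{v\in Z} d^+(v,Z) \ge |Z|^2/2 > \binom{|Z|}{2}$ forces a double edge $uv$ in $G^{\pm}[Z]$. The triple $zuv$ then has $z\to u$, $z\to v$, $u\leftrightarrow v$, hence $\delta^+(G[zuv]) \ge 1$ with $u,v\in X$, and Fact~\ref{fact:weaklyconnected}\ref{itm:wc5} concludes. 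Note also that the auxiliary observation you made, $|X| \ge |N^+(z,X)| \ge 2(1-\delta)n$, is fine but not where the difficulty lies; the missing ingredient is the in-degree of $z$, and the remedy is to change the shape of the triangle rather than to try to lower-bound $d^-(z)$.
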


\begin{proof}
Let $ Z \subseteq N^{+}(z, X)$ of size $ |Z| = 2 (1-\delta) n$. 
Note that 
\begin{align*}
	\delta^+(G[Z]) \ge \delta^+(G) + |Z| - n \ge (1-\delta) n = |Z|/2.
\end{align*}
Hence there exists an edge~$uv$ in~$G^{\pm}[Z]$.
Note that $\delta^+(G[zuv]) \ge 1$.
Hence $\{z\} \cup X$ is weakly connected by Fact~\ref{fact:weaklyconnected}\ref{itm:wc5}. 
\end{proof}

We now prove Lemma~\ref{lma:weaklyconnected}.

\begin{proof}[Proof of Lemma~\ref{lma:weaklyconnected}]
Let $\delta^+(G) = \delta n$ and by deleting edges if necessary, we may assume that $d^+(v) = \delta n$ for all $v \in V(G)$. 
Let $x_0 \in V(G)$ be such that $d^-(x_0) = \Delta^-(G) \ge \delta^+(G) = \delta n > 2(1- \delta) n$. 
By Proposition~\ref{prop:weaklyconnected1}, $N(x_0)$ is weakly connected.

Let $X$ be the set of vertices that are weakly connected to~$N(x_0)$. 
We may assume that $|X| < n$ or else we are done. 
Since $|X|\geq |N(x_0)| \ge \delta n$, we have $\delta^+(G[X]) \ge \delta n - (n - |X|) > |X|/2$.
By Corollary~\ref{cor:directedK_3}, $\mathcal{K}_{3,1}(G[X]) \ne \emptyset$. 

Let 
\begin{align*}
	Y  & = V(G) \setminus X, &
	W & = X \cap N^-(Y), \\
	Z & = ( X \cap N^+(Y) ) \setminus W, &
	T & = X \setminus ( W \cup Z) =X \setminus N(Y). 
\end{align*}
Note that
\begin{align}
	|X| &\ge \delta n \text{ and }
	|T|, |Y| \le (1- \delta) n < (2\delta -1 ) n. \label{eqn:basic}
\end{align}
By Proposition~\ref{prop:connected}, we have $\delta n  - |Y| \le d^{+}(y, X) \le 2 (1-\delta) n $ for all $y \in Y$. 
Hence
\begin{align}
	|Y| \ge (3 \delta -2 ) n .\label{eqn:|Y|}
\end{align}

We now bound the $d^-(v)$ from above. 

\begin{claim} \label{clm:YWZT}
For $ v \in V(G)$, 
\begin{align*}
	d^-(v) \le 
	\begin{cases}
		\min \{ |Y|+|W|, (1- \delta)n + |Y|\}	& \text{if $v \in Y$},\\
		(1- \delta)n + |Y|	& \text{if $v \in W$},\\
		2(1 - \delta) n & \text{if $v \in Z$}, \\
		|X|	& \text{if $v \in T$}.
	\end{cases}
\end{align*}
\end{claim}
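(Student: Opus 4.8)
The plan is to bound $d^-(v)$ separately for vertices in each of the four parts $Y$, $W$, $Z$, $T$, using the definitions of these sets together with the fact that $X$ is weakly connected and $Y = V(G) \setminus X$.

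\textbf{Proof proposal.} I would argue case by case according to which part of the partition $V(G) = Y \cup W \cup Z \cup T$ the vertex $v$ lies in.

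First, for $v \in Z$: by definition $Z \subseteq N^+(Y) \setminus W$, so $v$ has an out-neighbour in $Y$; I claim $d^-(v) \le 2(1-\delta)n$. If not, then $d^-(v) > 2(1-\delta)n$, and since $v$ has an out-neighbour (hence via $G^\pm$-type reasoning a neighbour) in $Y$ while $\mathcal{K}_{3,1}(G[X])\neq\emptyset$, one would show $\{v\}\cup X$ — and in fact some vertex of $Y$ — becomes weakly connected to $N(x_0)$ via Proposition~\ref{prop:weaklyconnected1} applied with $v$ in the role of $x$ (its neighbourhood $N(v)$ is weakly connected, and it meets both $X$ and $Y$), contradicting $Y \cap X = \emptyset$ and the maximality of $X$. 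More precisely: since $v\in X$ has large in-degree, $N(v)$ is weakly connected; as $v\in X$ it is weakly connected to $N(x_0)$, so $N(v)\subseteq X$; but $v\in N^+(Y)$ forces some $y\in Y$ with $yv\in E(G)$, and running Proposition~\ref{prop:weaklyconnected1}-style reasoning would drag $y$ into $X$.

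For $v \in T = X \setminus N(Y)$: here $v$ has no neighbour in $Y$ at all, so $N(v) \subseteq X$ and trivially $d^-(v) \le |N(v)| \le |X|$. For $v \in Y$: since $v\notin X$, by Proposition~\ref{prop:connected} (contrapositive, as in the derivation of~\eqref{eqn:|Y|}) we must have $d^+(v,X) < 2(1-\delta)n$; also the in-neighbours of $v$ lying in $X$ must lie in $W = X\cap N^-(Y)$ by definition of $W$, while in-neighbours outside $X$ lie in $Y$, giving $d^-(v) \le |Y| + |W|$; separately, in-neighbours of $v$ in $X$ are out-neighbours-direction restricted, and one bounds the $X$-part of $N^-(v)$ by $(1-\delta)n$ using $d^+(v) = \delta n$ together with $|X|\ge \delta n$ (so $v$ sends $\delta n$ out-edges, at most $n - |X|\le (1-\delta)n$ of which leave $X$... ) — combining the two estimates yields $d^-(v)\le\min\{|Y|+|W|,(1-\delta)n+|Y|\}$. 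For $v\in W\subseteq X$: again in-neighbours in $X$ contribute at most $(1-\delta)n$ by the same counting, and those outside $X$ contribute at most $|Y|$, giving $d^-(v)\le (1-\delta)n+|Y|$.

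\textbf{Main obstacle.} The delicate point is the $Z$ case: one must rule out a vertex of $X$ with simultaneously large in-degree and a neighbour in $Y$, which is exactly where the weak-connectedness machinery (Propositions~\ref{prop:weaklyconnected1}–\ref{prop:connected} and Fact~\ref{fact:weaklyconnected}) and the maximality of $X$ get used in tandem; the $Y$, $W$, $T$ cases are essentially bookkeeping with the degree conditions and the definitions of $W$ and $T$. I expect the $Z$ bound to be the step requiring the most care, and it is the one genuinely using that $X$ is a maximal weakly connected set containing $N(x_0)$.
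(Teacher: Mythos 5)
Your case split is the right skeleton, and your $T$ case is correct, as is the first bound $d^-(y)\le|Y|+|W|$ for $y\in Y$ (in-neighbours of $Y$ lie in $Y\cup W$ by the definition of $W$). The $Z$ case is close to the paper's argument (if $d^-(z)>2(1-\delta)n$ then Proposition~\ref{prop:weaklyconnected1} makes $N(z)$ weakly connected; since $N(z)$ meets both $Y$ and $X$, Fact~\ref{fact:weaklyconnected}\ref{itm:wc3} would extend $X$, contradicting maximality), although you momentarily misstate the direction — $z\in N^+(Y)$ means $z$ has an \emph{in}-neighbour in $Y$, not an out-neighbour; this does not matter since only the undirected neighbour is needed.

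The genuine gap is in the second bound for $Y$ and the bound for $W$, which you call ``essentially bookkeeping'' but are not. You try to bound $d^-(v,X)\le(1-\delta)n$ by counting out-edges of $v$ ($v$ sends $\delta n$ out-edges, at most $(1-\delta)n$ leave $X$), but this controls $d^+(v,\cdot)$, not $d^-(v,X)$: knowing where $v$'s out-edges go says nothing about how many vertices of $X$ point \emph{into} $v$. The actual argument has to use the weak-connectedness structure, just as in the $Z$ case. For $y\in Y$: suppose $d^-(y)>(1-\delta)n+|Y|$; pick any $x\in N^+(y)\cap X$ and note
$|N^-(y)\cap N^+(x)\cap X|\ge d^-(y)+\delta^+(G)+|X|-2n>0$,
so there is $x'\in N^-(y)\cap N^+(x)\cap X$ making $xx'y$ a $C_3$ with $x,x'\in X$; by Fact~\ref{fact:weaklyconnected}\ref{itm:wc5} this forces $y\in X$, a contradiction. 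For $w\in W$: suppose $d^-(w)>(1-\delta)n+|Y|$; pick $y\in N^+(w)\cap Y$ (which exists since $w\in N^-(Y)$) and argue $|N^+(y)\cap N^-(w)\cap X|>0$ by the same inclusion–exclusion, producing a $C_3$ on $\{w,x,y\}$ with $w,x\in X$ that again drags $y$ into $X$ via Fact~\ref{fact:weaklyconnected}\ref{itm:wc5}. So three of the four cases, not just $Z$, rely on the maximality of $X$ under weak connectedness; the only case that is pure bookkeeping is $T$.
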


\begin{proofclaim}
Consider $y \in Y$. 
By our construction, $N^-(y) \subseteq Y \cup W$, so $d^-(y) \le |Y|+|W|$.
Suppose that  $d^-(y) > (1- \delta)n + |Y|$.
Consider $x \in N^+(y) \cap X$.
Note that 
\begin{align*}
  |N^-(y) \cap N^+(x) \cap X| \ge d^-(y) + \delta^+(G) +  |X| - 2n > 0.
\end{align*}
There exists $x' \in N^-(y) \cap N^+(x) \cap X$ implying that $xx'y$ spans a~$C_3$.
By Fact~\ref{fact:weaklyconnected}\ref{itm:wc5}, $y \in X$, a contradiction. 

Let $w \in W$ and suppose to the contrary that $d^-(w) > (1- \delta) + |Y|$.
Let $y \in N^+(w) \cap Y$, which exists as $w \in W$. 
Note that 
\begin{align*}
  |N^+(y) \cap N^-(w) \cap X| \ge \delta^+(G) + d^-(w) + |X| - 2n > 0.
\end{align*}
For each $x \in N^+(y) \cap N^-(w) \cap X$, $G[w x y]$ contains a $C_3$ implying that $y$ is weakly connected to~$X$ by Fact~\ref{fact:weaklyconnected}\ref{itm:wc5}, a contradiction.

If  there exists $z \in Z$ with $d^-(z) > 2(1 - \delta) n$, then Proposition~\ref{prop:weaklyconnected1} implies $N(z)$ is weakly connected.
Note that $Y \cap N(z) \ne \emptyset$ as $z \in N^+(Y)$, and $N(z) \cap X \ne \emptyset$, implying that $N(z) \cup X$ is weakly connected by Fact~\ref{fact:weaklyconnected}\ref{itm:wc3}, contradicting the maximality of~$X$.

Finally, note that $N^-(T) \subseteq X$ by the definition of~$T$.
\end{proofclaim}

In the next claim, we bound $|W \cup Y|$ and $d^-(y)$ further for~$y \in Y$. 

\begin{claim} \label{clm:YWZT2}
We have  $|W\cup Y| < \delta n $ and for all $y \in Y$, $d^-(y) \le (1- \delta)n$.  
\end{claim}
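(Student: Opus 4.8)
The plan is to prove the two assertions in turn, in each case deriving a contradiction from the maximality of $X$ by producing a weakly-connected set strictly larger than $X$. Throughout I would use the in-degree bounds of Claim~\ref{clm:YWZT} ($d^-(v)\le(1-\delta)n+|Y|$ for $v\in W\cup Y$, $d^-(v)\le2(1-\delta)n$ for $v\in Z$, $d^-(v)\le|X|$ for $v\in T$), together with $|X|=n-|Y|$, $|Y|\le(1-\delta)n$ from \eqref{eqn:basic}, and the fact that $\delta>2/3$ forces $|X|\ge\delta n>2(1-\delta)n$.

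First I would prove $|W\cup Y|<\delta n$ by a global edge count. Since $d^+(v)=\delta n$ for every $v$, one has $\sum_{v\in V(G)}d^-(v)=\delta n^2$; summing over the partition $V(G)=(W\cup Y)\cup(Z\cup T)$ and applying Claim~\ref{clm:YWZT} gives $\delta n^2\le|W\cup Y|((1-\delta)n+|Y|)+2(1-\delta)n|Z|+|X||T|$. As $2(1-\delta)n\le|X|$ and $|Z|+|T|=|X|-|W|=n-|W\cup Y|$, the last two terms are at most $|X|(n-|W\cup Y|)$. Writing $p=|W\cup Y|/n$, $\beta=|Y|/n$ and dividing by $n^2$, this becomes $\delta\le p(1-\delta+\beta)+(1-\beta)(1-p)=p(2\beta-\delta)+1-\beta$. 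If $p\ge\delta$, then because $2\beta-\delta\le2(1-\delta)-\delta=2-3\delta<0$ I can replace $p$ by $\delta$, obtaining $\beta(2\delta-1)\ge\delta^2+\delta-1$, i.e.\ $\beta\ge\frac{\delta^2+\delta-1}{2\delta-1}$; since $\frac{\delta^2+\delta-1}{2\delta-1}>1-\delta$ is equivalent to $\delta(3\delta-2)>0$, which holds as $\delta>2/3$, this contradicts $\beta\le1-\delta$. Hence $p<\delta$.

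Next, for $d^-(y)\le(1-\delta)n$ with $y\in Y$, I would argue by contradiction: assume $d^-(y)>(1-\delta)n$. It suffices to exhibit $u,u'\in X$ with $\delta^+(G[yuu'])\ge1$, since then $\{y\}\cup X$ is weakly connected by Fact~\ref{fact:weaklyconnected}\ref{itm:wc5} (recall $\mathcal{K}_{3,1}(G[X])\ne\emptyset$), contradicting maximality of $X$. Put $A=N^+(y)\cap X$, $B=N^-(y)\cap X$; by the definitions of $W,Z,T$ we have $N^+(y)\subseteq Y\cup W\cup Z$ and $N^-(y)\subseteq W\cup Y$, so $A\subseteq W\cup Z$, $B\subseteq W$, and $|A|\ge\delta n-|Y|$. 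If $|B|>(1-\delta)n$, then $|A\cap B|\ge|A|+|B|-|X|\ge|B|-(1-\delta)n>0$, and for $v\in A\cap B$ we have $|N^+(v)\cap B|\ge d^+(v)-(n-|B|)>0$, so choosing $u'\in N^+(v)\cap B$ (necessarily $u'\ne v$) the edges $y\to v\to u'\to y$ form a cyclic triangle. If instead $G^\pm[A]\ne\emptyset$, an edge $uu'$ of $G^\pm$ inside $A$, together with $y\to u$ and $y\to u'$, yields a copy of $J_3$ on $\{y,u,u'\}$. This leaves the case $|B|\le(1-\delta)n$ and $G^\pm[A]=\emptyset$; a short density argument then gives $|A|<2(1-\delta)n$, hence $d^+(y,Y)=\delta n-|A|>(3\delta-2)n$, while $|N^-(y)\cap Y|=d^-(y)-|B|>0$. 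Here I would exploit the dense directed structure $G$ induces on $Y$ around $y$, combined with $|W\cup Y|<\delta n$ just established and the in-degree bounds of Claim~\ref{clm:YWZT}, to reach a contradiction. The hard part is precisely this last case—a vertex $y\in Y$ with $(1-\delta)n<d^-(y)\le(1-\delta)n+|Y|$—where the portion of $N(y)$ lying in $X$ is too small to build a member of $\mathcal{K}_{3,1}(G)$ directly, so one must argue inside $Y$; I expect the regime with $\delta$ just above $2/3$ to be the most delicate, exactly as in the proof of $|W\cup Y|<\delta n$.
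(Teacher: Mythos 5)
Your proof of the first assertion $|W\cup Y|<\delta n$ is correct and essentially the paper's argument: you both sum $d^-(v)=d^+(v)=\delta n$ over all of $V(G)$, apply the in-degree bounds from Claim~\ref{clm:YWZT} termwise, and derive $\delta(3\delta-2)\le 0$, contradicting $\delta>2/3$.

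For the second assertion, however, there is a genuine gap: you explicitly do not finish the case $|B|\le(1-\delta)n$ and $G^\pm[A]=\emptyset$, and this case is not merely a technicality. Your case split is driven by trying to build a member of $\mathcal{K}_{3,1}$ with both auxiliary vertices inside $X$, which is why you restrict attention to $A=N^+(y)\cap X$ and $B=N^-(y)\cap X$. The paper's argument avoids the case split entirely by a different route: fix \emph{any} $x\in N^+(y)\cap X$ and observe $|N^+(x)\cap N^-(y)|\ge d^+(x)+d^-(y)-n>0$ (using only $d^-(y)>(1-\delta)n$), so there exists $v$ with $y\to x\to v\to y$ a directed triangle. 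If $v\in X$ then $u=x$, $u'=v$ and Fact~\ref{fact:weaklyconnected}\ref{itm:wc5} give the contradiction immediately; otherwise $v\in Y$, but then $x$ has an out-neighbour in $Y$, i.e.\ $x\in X\cap N^-(Y)=W$. Since this holds for \emph{every} $x\in N^+(y)\cap X$, it forces $N^+(y)\subseteq W\cup Y$, whence $\delta n=d^+(y)\le|W\cup Y|$, contradicting the first assertion you just proved. The key idea you are missing is that one need not find the two auxiliary vertices of the triangle inside $X$; it suffices that the alternative ($v\in Y$) itself yields structural information ($x\in W$) that, aggregated over all $x\in N^+(y)\cap X$, violates the just-established bound on $|W\cup Y|$. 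Your third case --- where neither a suitable $C_3$ nor $J_3$ through two vertices of $X$ is available --- is precisely the regime where the paper's dichotomy does all the work, and your sketch of ``exploiting the dense directed structure inside $Y$'' would have to reinvent this observation.
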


\begin{proofclaim}
First suppose that $|W\cup Y| \ge \delta n $.
Recall by \eqref{eqn:basic} that $|Y| \le (1-\delta)n < \delta n/2$.
By Claim~\ref{clm:YWZT}, we have
\begin{align*}
	\delta n^2  & =  e(G) = \sum_{v \in V(G)} d^-(v) \le  |X| |T \cup Z| + ( (1-\delta)n + |Y|))|W \cup Y|\\
	& = (n - |Y|)(n - |W \cup Y|) + ( (1-\delta)n + |Y|) |W \cup Y| = (n-|Y|)n - (\delta n- 2|Y| )|W \cup Y| \\
	& \le (n-|Y|)n - (\delta n- 2|Y| )\delta n = (1- \delta^2)n^2 + (2 \delta -1 )n |Y|\\
	& \overset{\mathclap{\text{\eqref{eqn:basic}}}}{\le} (1- \delta^2)n^2 + (2 \delta -1 )(1-\delta)n^2 = 3\delta(1-\delta)n^2.
\end{align*}
After rearrangement, we obtain that $0 \le \delta (2-3 \delta) <0$, a contradiction. 
Hence  $|W\cup Y| < \delta n $.

Suppose to the contrary that there exists $y \in Y$ such that $d^-(y) > (1- \delta)n$.
Consider any $x \in N^+(y) \cap X$. 
There exists $v \in N^+(x) \cap N^-(y)$ and $vxy$ forms a $C_3$. 
Note that $v \in Y$ or else $y$ is weakly connected to~$X$ by Fact~\ref{fact:weaklyconnected}\ref{itm:wc5}, a contradiction. 
Therefore $d^+(x,Y) >0$ and $x\in W$, implying that $N^+(y) \subseteq W \cup Y $, which is a contradiction as $|W\cup Y| < \delta n$.
\end{proofclaim}
Next suppose that $|W\cup Y| \ge (1 - \delta) n $.
By Claims~\ref{clm:YWZT} and~\ref{clm:YWZT2}, we have
\begin{align}
	\delta n^2  & =  e(G) = \sum_{v \in V(G)} d^-(v) \le  |X| |T| + 2(1- \delta)n |Z| + ( (1- \delta)n + |Y| ) |W| + (1- \delta) n |Y| \nonumber \\
	& =  ( n - |Z \cup W \cup Y|) ( n - |Y|) 	 + 2(1- \delta)n |Z| + ( (1- \delta)n + |Y| ) |W| + (1- \delta) n |Y| \nonumber \\
	& = (n - |Y|)^2 +  (1- \delta) n |Y| -  ((2 \delta -1) n - |Y| ) |Z| - ( \delta n - 2|Y| ) |W|. \label{eqn:connect2}
\end{align}
Since $|Y| \le (1- \delta ) n$ by~\eqref{eqn:basic}, we have $ \delta n - 2|Y|  \ge (2 \delta -1) n - |Y| > 0 $.
Note that $|Z \cup W \cup Y| = |V(G) \setminus T| \ge \delta n$  by~\eqref{eqn:basic} and $|W\cup Y| \ge (1 - \delta) n $, so \eqref{eqn:connect2} is maximised when $|Z| = (2\delta -1)n$ and $|W| = (1- \delta) n - |Y|$. 
Thus, we have 
\begin{align*}
	\delta n^2  & \le (n - |Y|)^2 +  (1- \delta) n |Y| -  ((2 \delta -1) n - |Y| ) (2\delta -1)n - ( \delta n - 2|Y| ) ( (1- \delta) n - |Y|) ,\\
	|Y|^2 & \le \delta (2- 3 \delta )n^2 <0,	
\end{align*}
a contradiction. 
Therefore, we have 
\begin{align}
|W\cup Y| < (1 - \delta) n . \label{eqn:W+Y}
\end{align}

Consider $z \in Z$. 
Let $y \in Y \cap N^-(z)$ (which exists by definition). 
If $d^-(z,N^+(y) ) > (1-\delta) n $, then there exists $v \in N^+(y) \cap N^{\pm}(z)$ as
\begin{align*}
	|N^+(y) \cap N^{\pm}(z)| = |N^{+}(z) \cap (N^+(y) \cap N^-(z))| \ge d^+(z) + d^-(z,N^+(y) )- n >0.
\end{align*}
Note that $v,z \in Z \cup W \subseteq X$ and $\delta^+(G[vyz]) \ge 1$ implying that $y$ is weakly connected to~$X$, a contradiction. 
Hence $d^-(z, N^+(y) ) \le (1- \delta) n$.
Recall that $N^+(y) \cap T = \emptyset$.
Thus
\begin{align}
	d^-(z, V(G) \setminus T ) & \le d^-(z, N^+(y) ) +|V(G) \setminus (T \cup N^+(y))| 
	\le 
	(1- \delta) n + |V(G) \setminus (T \cup N^+(y))| \nonumber\\
	& \le 2(1- \delta) n - |T|.\label{eqn:d^-(z,noT)}
\end{align}

We now consider the edges in $G \setminus T$.
Recall that $N^+(Y) \cap T = \emptyset$ and $N^-(Y) \subseteq W \cup Y$.
By Claim~\ref{clm:YWZT} and~\eqref{eqn:d^-(z,noT)}, we have 
\begin{align*}
 0 & = \sum_{v \in V(G) \setminus T} \left( d^-(v, V(G) \setminus T)  -  d^+(v, V(G) \setminus T) \right) = \sum_{v \in Z \cup W \cup Y} \left( d^-(v, V(G) \setminus T)  -  d^+(v, V(G) \setminus T) \right)\\
	&\le \Big( 2(1- \delta) n - |T| - (\delta n - |T|) \Big) |Z| 
			+ \Big((1-\delta) n +|Y| - (\delta n - |T|) \Big) |W|
			+ \Big( |W \cup Y| - \delta n \Big) |Y|\\
	& = - (3 \delta - 2)n |Z| - ( (2\delta - 1) n - |T|)|W| + ( |W|+|W \cup Y|- \delta n  )   |Y| 
	\\
	&\overset{\mathclap{\text{\eqref{eqn:basic}}}}{<} 0+0+( 2|W \cup Y|- \delta n  ) |Y|
	\overset{\mathclap{\text{\eqref{eqn:W+Y}}}}{<}  - (3 \delta -2) n |Y| <0,
\end{align*}	
a contradiction. 
\end{proof}


\subsection{Proof of Lemma~\ref{lem:closed}}

In this subsection, $G$ is an edge-coloured graph. 
The next lemma shows that it suffices to find an almost spanning $(s, \eta)$-closed vertex subset in~$G$. 

\begin{lemma} \label{lem:almostclosed}
Let $1/n \ll \eta' \ll \eta, 1/s, \eps$.
Let $G$ be an edge-coloured graph on $n$ vertices with $\delta^c(G) \ge (2/3 + \eps) n$. 
Suppose that $X \subseteq V(G)$ is $(s, \eta)$-closed with $|X| \ge (1- \eps) n$.
Then $G$ is $(2s+1,\eta')$-closed.
\end{lemma}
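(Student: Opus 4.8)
## Proof proposal for Lemma~\ref{lem:almostclosed}

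The plan is to extend the closedness of the large set $X$ to the whole vertex set by showing that \emph{every} vertex $v\in V(G)\setminus X$ is $(s',\eta'')$-close to some (hence, by transitivity, every) vertex of $X$, for a slightly larger connector length $s'$ and slightly smaller constant $\eta''$; then Fact~\ref{fact:weaklyconnected}-style transitivity for closedness (which holds with the usual loss in parameters, cf.\ the proof of Lemma~\ref{lemma:LoMarkstrom} and the remark that closedness is an equivalence-type relation) upgrades this to $(2s+1,\eta')$-closedness of all of $V(G)$. So the core task is a single ``reaching-in'' step: given $v\notin X$, find many $(v,x;K_3)$-connectors of bounded length whose interior vertices lie (mostly) in $X$.

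First I would record the density consequences of $\delta^c(G)\ge (2/3+\eps)n$ together with $|V(G)\setminus X|\le \eps n$: for any $v$, the colour neighbourhood of $v$ inside $X$ still has size at least $(2/3)n \ge (2/3-O(\eps))|X|$, and for any two vertices $u,w$ their common colour-neighbourhood inside $X$ has size at least $(1/3-O(\eps))n$. Passing through Proposition~\ref{prop:conversion}, convert $G$ to a digraph $H$ with $\delta^+(H)\ge (2/3+\eps/2)n$, base graph inside $G$, so that rainbow triangles of $G$ correspond to members of $\mathcal{K}_{3,1}$ in $H$ whose blow-ups are properly coloured (using Corollary~\ref{cor:conversionK_t} and Lemma~\ref{lma:KMSV} to pass back). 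The point is that ``$v$ is $(s',\eta'')$-close to $x$ in $G$'' will follow from producing $\Omega(n^{3s'-1})$ vertex sets $S$ with $|S|=3s'-1$ such that both $H[S\cup\{v\}]$ and $H[S\cup\{x\}]$ have perfect $\mathcal{K}_{3,1}$-tilings with properly-coloured realisations; the properness is automatic for all but $o(n^{3s'-1})$ choices by Corollary~\ref{cor:conversionK_t}, so it suffices to count in $H$.

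The combinatorial heart is then: for $v\notin X$ and $x\in X$, build a short connector. Since $d^+_H(v)\ge (2/3+\eps/2)n$ and $|V(G)\setminus X|\le \eps n$, the out-neighbourhood $N^+_H(v)\cap X$ is large; I want to pick inside it a ``good pair'' $\{a,b\}$ with $ab,ba\in E(H)$ (i.e.\ an edge of $H^\pm$), which exists in abundance because $H^\pm$ restricted to any set of size $\ge (2/3+\eps/2)n$ has minimum degree $\ge (1/3+\eps/2)n$ and hence many edges — in fact $\Omega(n^2)$ such pairs, and a positive fraction of them lie in $X$. For each such pair $\{a,b\}\subseteq N^+_H(v)\cap X$, the set $\{v,a,b\}$ spans a $J_3$-like digraph (so lies in $\mathcal{K}_{3,1}$), which gives a ``length-$1$ connector from $v$ into the pair $\{a,b\}\subseteq X$''; symmetrically, since $a,b\in X$ and $X$ is $(s,\eta)$-closed, $a$ is $(s,\eta)$-close to $x$, and we can combine: take an $(a,x;K_3)$-connector $S_0$ of length $s$, and then $S:=S_0\cup\{b\}$ has size $3s$, with $H[S\cup\{v\}]$ tiled by $S_0$'s tiling-of-$\{x\}$-replaced plus the triangle $\{v,a,b\}$... — here one has to be slightly careful about which of $a,b$ plays which role, so I would instead set $S:=S_0\cup\{b\}\cup(\text{one more dummy triangle's worth})$ and argue that both $H[S\cup\{v\}]$ and $H[S\cup\{x\}]$ tile: $S\cup\{v\}$ uses $\{v,a,b\}$ together with the perfect tiling of $S_0\cup\{a\}$... wait, $a\notin S$. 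Cleaner: let $S=S_0\cup\{a,b,c\}$ where $c$ is any common neighbour giving a triangle $\{a',b',c\}$ — I will just take a connector of length $s+2$ of the explicit form $S = S_0 \sqcup \{a,b\} \sqcup \{a_1,b_1,c_1\}$ engineered so that $S\cup\{v\}$ decomposes as (triangle $vab$) $\sqcup$ (the $x$-side tiling of $S_0$) $\sqcup$ (triangle on $\{a_1,b_1,c_1\}$), while $S\cup\{x\}$ decomposes as (the $v$... no) — the genuinely clean route, which is what I would actually write, is: $a$ is $(s,\eta)$-close to $x$, so there are $\eta n^{3s-1}$ sets $S_0$ with $H[S_0\cup\{a\}]$ and $H[S_0\cup\{x\}]$ both perfectly tiled; now $\{v,a,b\}$ with $b$ ranging over $\Omega(n)$ valid partners, and $S:=S_0\cup\{b\}$ (size $3s$) fails to have the right parity, so take $S:=S_0\cup\{v,b\}$...

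Let me restate the clean version I'd commit to: to show $v$ and $x$ are $(s{+}1,\eta'')$-close, I produce connectors $S$ of size $3(s{+}1)-1 = 3s+2$ as follows. Pick an $H^\pm$-edge $\{a,b\}$ in $N^+_H(v)\cap X$ (many choices), and independently pick a set $S_0$ of size $3s-1$ witnessing that $a$ and $x$ are $(s,\eta)$-close, with $S_0\cap(\{v,b\}\cup\{a,x\})=\emptyset$ (discarding the $o(\cdot)$ bad overlaps). Set $S:=S_0\cup\{a,b,v\}\setminus\{v\}$... no: set $S:=S_0\cup\{a,b\}$, of size $3s+1$, still wrong parity. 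OK — I will simply fold in \emph{two} edges: pick two disjoint $H^\pm$-edges $\{a,b\},\{a',b'\}\subseteq N^+_H(v)\cap X$ and $S_0$ witnessing $a\sim_{(s,\eta)}x$ disjoint from everything; then $S:=S_0\cup\{a,b,b'\}$ has size $3s+2=3(s{+}1)-1$, and $H[S\cup\{v\}]$ is tiled by $\{v,a,b\}\in\mathcal{K}_{3,1}$ together with the perfect tiling of $H[S_0\cup\{x\}]$-pattern rerooted, $\{$remaining vertex $b'\}$ — that still leaves $b'$ unmatched. The honest fix is to not try to be minimal: I will allow connector length $s':=2s+1$ (matching the statement's target), which gives $3s'-1 = 6s+2$ slots, ample room to write $S = S_0 \sqcup S_0' \sqcup\{\text{a few triangle vertices}\}$ where $S_0$ handles the $x$-side via $a\sim x$ and $S_0'$ handles the $v$-side via $b\sim y$ for some $y\in X$, gluing through the triangles $\{v,a,b\}$ and a fixed triangle in $H[X]$ (which exists since $X$ is large with $\delta^+(H[X])>|X|/2$, by Corollary~\ref{cor:directedK_3}); concretely this is exactly the mechanism of Fact~\ref{fact:weaklyconnected}\ref{itm:wc4}/\ref{itm:wc5} but in the counting ($(s,\eta)$-closed) regime rather than the weak regime, and the product structure gives $\Omega(n^{6s+1})$ connectors. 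Transitivity across $X$ (any two $x,x'\in X$ are already $(s,\eta)$-close) then yields that all pairs in $V(G)$ are $(2s+1,\eta')$-close, as claimed.

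\medskip

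The main obstacle, and the step I would spend the most care on, is the bookkeeping that makes ``$v$ reaches into $X$'' quantitative: one must ensure the chosen edge $\{a,b\}\subseteq N^+_H(v)\cap X$ lies in $H^\pm$ (so that $\{v,a,b\}\in\mathcal{K}_{3,1}$ rather than merely having base graph $K_3$) — this is where $\delta^c(G)\ge(2/3+\eps)n$ rather than $>n/2$ is genuinely used, via $\delta^+(H^\pm$-restricted-to-a-$(2/3)$-set$)\ge(1/3)n>0$ — and that after fixing $\{a,b\}$ there remain $\Omega(n^{3s-1})$ valid $S_0$ disjoint from the bounded set $\{v,a,b,x\}$, which costs only an $O(n^{3s-2})$ term. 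The rest — converting via Proposition~\ref{prop:conversion}, discarding improperly-coloured blow-ups via Corollary~\ref{cor:conversionK_t}, and assembling via Lemma~\ref{lma:KMSV} — is routine given the earlier sections. I expect no difficulty propagating the constants as long as the hierarchy $1/n\ll\eta'\ll\eta,1/s,\eps$ is respected throughout.
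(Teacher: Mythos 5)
Your guiding intuition --- glue rainbow triangles with both feet in $X$, then use the closedness of $X$ to join the feet, which is the counting analogue of Fact~\ref{fact:weaklyconnected}\ref{itm:wc4} --- is exactly the mechanism of the paper's proof, but the proposal as written has two genuine problems.

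First, the ``reach into $X$, then use transitivity'' architecture cannot produce the claimed length $2s+1$. Composing a $(v,x;K_3)$-connector of length $\sigma_1$ with an $(x,w;K_3)$-connector of length $\sigma_2$, via $S_1\cup\{x\}\cup S_2$, gives a $(v,w;K_3)$-connector of length $\sigma_1+\sigma_2$. For two vertices $z_1,z_2\notin X$ you need two reaching-in steps, and the reaching-in step itself (one triangle with two feet, each foot joined into $X$ by an $(s,\eta)$-connector) already costs length $2s+1$, so chaining lands at $4s+2$, not $2s+1$. The paper avoids transitivity entirely and treats $z_1,z_2$ symmetrically in a single step: for every $z\in V(G)$ one shows there are at least $\eps n^2/6$ rainbow triangles $zxx'$ with $x,x'\in X$; given $z_1,z_2$ one then fixes such triangles $z_1x_1x_1'$ and $z_2x_2x_2'$ (with $\{x_1,x_1',x_2,x_2'\}$ disjoint), picks disjoint $(s,\eta)$-connectors $S$ for the pair $x_1,x_2$ and $S'$ for $x_1',x_2'$, and observes that $W=\{x_1,x_1',x_2,x_2'\}\cup S\cup S'$ is a $(z_1,z_2;K_3)$-connector of length exactly $2s+1$, with $\Omega(n^{6s+2})$ choices coming from the product of the triangle and connector counts.

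Second, the concrete gluing you commit to in the final version is asymmetric and does not close up. With the triangle $\{v,a,b\}$, an $(a,x;K_3)$-connector $S_0$, a $(b,y;K_3)$-connector $S_0'$ for ``some $y\in X$'', and ``a fixed triangle in $H[X]$'', try to tile $S\cup\{v\}$: after removing the triangle $\{v,a,b\}$ and the tiling of $S_0'\cup\{y\}$, you are left with $S_0$ plus whatever stray triangle vertices remain, and neither $S_0\cup\{a\}$ nor $S_0\cup\{x\}$ --- the only two sets you know to be tileable --- is among what remains. What is missing is the symmetric triangle at the \emph{other} endpoint: a rainbow triangle $\{x,c,d\}$ with $c,d\in X$, so that the two $(s,\eta)$-connectors run $a\sim c$ and $b\sim d$. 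The string of parity false-starts in your sketch (candidate sizes $3s$, $3s+1$, $3s+2$) all trace back to this missing second triangle.

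A smaller remark: the detour through Proposition~\ref{prop:conversion}, Corollary~\ref{cor:conversionK_t} and Lemma~\ref{lma:KMSV} is much heavier than needed for the ``many rainbow triangles at $z$ with two feet in $X$'' step. A direct count in the edge-coloured graph suffices: take $Z\subseteq X\cap N(z)$ with $|Z|=2n/3$ and all colours $c(zz')$ distinct, define the local auxiliary digraph on $Z$ with $uw$ an edge iff $uw\in E(G)$ and $c(uw)\neq c(zu)$, check its minimum out-degree is at least $|Z|/2+\eps n/4$, and read off at least $\eps n^2/6$ double edges, each giving a rainbow triangle $zuw$ with $u,w\in X$. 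No regularity lemma, no reduced digraph and no $K_r(r^3)$ blow-ups are needed here, and avoiding them spares you the extra error terms you would otherwise have to absorb into $\eta'$.
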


\begin{proof}
We first show that for each vertex $z \in V(G)$, there are many rainbow~$K_3$ in~$G[X \cup \{z\}]$ containing~$z$. 

\begin{claim} \label{claim:1}
For each $z \in V(G)$, there exist at least $\eps  n^2 /6$ pairs of vertices $x,x' \in X$ such that $G[zxx']$ is a rainbow~$K_3$.
\end{claim}

\begin{proofclaim}
Let $z \in V(G)$. Note that $\delta^c(G) + |X| - n \ge 2n/3$. Let $Z$ be a subset of $X \cap N(z)$ such that $|Z| = 2n/3$ and $c(zz')$ is distinct for all $ z' \in Z$.
Let $H$ be the digraph on~$Z$ such that $uv \in E(H)$ if and only if $uv \in E(G)$ and $c(uv) \ne c(zu)$.
Note that 
\begin{align*}
	\delta^+(H) \ge \delta^c(G) -1 + |Z| - n \ge (1/3 + \eps) n -1  \ge |Z|/2+ \eps n/4 .
\end{align*}
Hence the number of double edges in~$H$ is at least $\eps |Z| n /4 = \eps  n^2 /6$.
For each double edge~$xx'$ in~$H$, the vertices $z,x,x'$ span a copy of a rainbow~$K_3$ in $G$, as $c(zx) \ne c(xx') \ne c(zx')$ and $x,x' \in Z$. 
\end{proofclaim}

Consider distinct $z_1,z_2 \in V(G)$.
By Claim~\ref{claim:1}, there exist $\eps^2  n^4 /36$ many vertices $x_1,x'_1, x_2,x'_2 \in X$ such that $G[z_ix_ix'_i]$ is a rainbow~$K_3$ for $i \in [2]$. 
Since $X$ is $(s, \eta)$-closed, there exist at least $ (\eta n^{3s-1}/2)^2$ pairs $(S,S')$ such that $S,S', \{x_1,x'_1, x_2,x'_2\}$ are pairwise disjoint and $S$ and $S'$ are an $(x_1,x_2)$- and $(x'_1,x'_2)$-connector of length~$s$, respectively.
Then $S \cup S' \cup \{x_1,x'_1, x_2,x'_2\}$ is a $(z_1,z_2)$-connector of length~$2s+1$.
Hence, there are at least $ \frac{\eps^2  n^4}{36}\cdot\left(\frac{\eta n^{3s-1}}2\right)^2 \cdot \frac{1}{(2s+1)!}$ such connectors. 
The lemma follows as $\eta' \ll \eta, 1/s, \eps$.
\end{proof}

Recall that Proposition~\ref{prop:conversion} converts an edge-coloured graph~$G$ into a digraph~$H$.
The next corollary shows that most elements of $\mathcal{K}_{3,1}(H)$ correspond to a rainbow~$K_3$ in~$G$.

\begin{corollary} \label{cor:conversionK_3}
Let $G$ and $H$ be as defined in Proposition~\ref{prop:conversion}.
Then there are at most $3n^2$ many $S \in \binom{V(H)}{3}$ such that $H[S] \in \mathcal{K}_{3,1}(H)$ but $G[S]$ is not a rainbow~$K_3$.
Moreover, for each $v \in V(H)$, there are at most $3n^{3/2}$ many $S \in \binom{V(H)}{3}$ such that $ v \in S$ and $H[S] \in \mathcal{K}_{3,1}(H)$ but $G[S]$ is not a rainbow~$K_3$.
\end{corollary}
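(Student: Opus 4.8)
\textbf{Proof proposal for Corollary~\ref{cor:conversionK_3}.}
The plan is to bound the number of ``bad'' triples directly via the moreover statement of Proposition~\ref{prop:conversion}, exactly as in the proof of Corollary~\ref{cor:conversionK_t} but tracking the count more carefully. Suppose $S = \{x,y,z\} \in \binom{V(H)}{3}$ with $H[S] \in \mathcal{K}_{3,1}(H)$ but $G[S]$ is not a rainbow~$K_3$. Since the base graph of $H[S]$ is complete, $G[S]$ is a genuine triangle, so it fails to be rainbow only because two of its edges share a colour, say $c(vx') = c(vy')$ for some labelling of $S$ as $\{v,x',y'\}$. Because $\delta^+(H[S]) \ge 1$, each vertex of $S$ has an out-neighbour inside $S$, and from this I would argue (as in the proof of Corollary~\ref{cor:conversionK_t}) that we may assume $x'vy'$ is a directed path in $H$: indeed, if both monochromatic edges at $v$ pointed the same way we could reorient using the out-degree condition, and in any case one of the two edges of colour $c(vx')$ at $v$ is an in-edge of $v$ while the moreover statement controls exactly such configurations.

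With $x'vy'$ a directed path in $H$ and $c(x'v) = c(vy')$, the moreover statement of Proposition~\ref{prop:conversion} says that for each fixed $v \in V(H)$ there are at most $n$ pairs $(x',y')$ with $x'v, vy' \in E(H)$ and $c(x'v) = c(vy')$. Summing over the three possible choices of which vertex of $S$ plays the role of $v$, and over all $v \in V(H)$, gives at most $3n \cdot n = 3n^2$ bad triples in total, which is the first assertion. For the moreover part, fix $v \in V(H)$; a bad triple $S$ containing $v$ either has $v$ in the role of the ``centre'' (so $S$ is one of at most $n$ triples by the moreover statement applied to $v$), or $v$ is one of the two endpoints $x'$ or $y'$. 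In the latter case the centre vertex $w$ ranges over $V(H)$, but $v = x' \in N^-(w)$ with $c(vw) = c(wy')$, and by the property ``for all $uv \in E(H)$ there are at most $\sqrt{n}$ vertices $w' \in N^-(u)$ with $c(uv) = c(uw')$'' applied suitably — more precisely, the edge $wy'$ has at most $\sqrt{n}$ in-neighbours $w'$ of $w$ with $c(wy') = c(ww')$, so the number of relevant $(w,y')$ with $v$ forced as such a $w'$ is at most $n \cdot \sqrt{n}$... here I would instead bound it as: the out-edge of $v$ in the path is $vw$, and $y'$ is then uniquely determined by $x'=v$ and $w$ via the moreover statement (``$y$ is uniquely determined by $x$'' in a directed path $xwy$ with $c(xw)=c(wy)$), so there are at most $n$ choices of $w$ and then $y'$ is forced, giving at most $n$ such triples; symmetrically for $v$ in the role of $y'$. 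Altogether at most $n + n + n = 3n^{3/2}$ is far more generous than needed, and in fact the bound $3n$ would suffice, but since the corollary only claims $3n^{3/2}$ I would simply record the weaker estimate.

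The main obstacle, and the step deserving the most care, is the reorientation argument that lets us assume $x'vy'$ is a directed path: one must check that whenever $G[S]$ is a non-rainbow triangle with $\delta^+(H[S]) \ge 1$, there is a choice of the monochromatic pair and an orientation consistent with $H$ making it a directed path through the apex $v$. This follows because the two edges of the monochromatic pair meet at a common vertex $v$; if the edge directed into $v$ among them, say $x'v$, exists then we are done with $y'$ the other endpoint (the edge $vy'$ is directed out of $v$ by the out-degree structure or we swap the roles of $x'$ and $y'$); the only case to rule out is that both colour-$c$ edges at $v$ are out-edges of $v$, but then $v$ has two out-edges inside $S$ of the same colour, contradicting that $G[u, N_H^+(u)]$ is rainbow for all $u$ — this is precisely one of the conclusions of Proposition~\ref{prop:conversion}. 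Once that case analysis is pinned down, both counting statements are immediate from the moreover statement.
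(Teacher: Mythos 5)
Your overall approach — reduce to directed paths through the apex vertex, then apply the ``moreover'' statement of Proposition~\ref{prop:conversion} — matches the paper's, and your reorientation argument and first count ($3n^2$) are sound. However, there is a genuine error in your ``moreover'' count. You claim that the case where the fixed vertex $v$ plays the role of the head $y'$ of the directed path $x' \to w \to y'$ is ``symmetric'' to the case $v = x'$ and hence contributes at most $n$ triples, concluding that ``the bound $3n$ would suffice.'' This symmetry does not hold. The uniqueness in the moreover statement goes only one way: given the tail $x'$ and the centre $w$, the head $y'$ is unique (because $G[w, N^+_H(w)]$ is rainbow, so at most one out-neighbour of $w$ carries the colour $c(x'w)$). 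But given the centre $w$ and the head $y'$, the tail $x'$ is not unique — $x'$ ranges over in-neighbours of $w$ with $c(x'w) = c(wy')$, and Proposition~\ref{prop:conversion} only guarantees there are at most $\sqrt{n}$ such vertices, not one. So when $v = y'$ is fixed, you choose $w \in N^-_H(v)$ ($\le n$ options) and then $x'$ ($\le \sqrt{n}$ options), giving $\le n^{3/2}$ triples, not $\le n$. Interestingly, you had essentially written down this $n\cdot\sqrt{n}$ bound a few lines earlier before discarding it in favour of the incorrect ``symmetric'' shortcut.

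The corrected tally is $n$ (apex) $+\, n$ (tail) $+\, n^{3/2}$ (head) $\le 3n^{3/2}$, which matches the paper's proof and explains why the exponent $3/2$ appears in the statement at all — it is not a loose overestimate of an $O(n)$ quantity but the honest contribution of the head case. You should delete the ``symmetrically\ldots at most $n$'' claim and replace it by the $\sqrt{n}$-per-centre argument you had already sketched, citing the ``for all $uv \in E(H)$, there are at most $\sqrt{n}$ vertices $w \in N^-_H(u)$ with $c(uv)=c(uw)$'' clause of Proposition~\ref{prop:conversion}.
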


\begin{proof}
Let $S = \{x,y,z\}$ be such that $H[S] \in \mathcal{K}_{3,1}(H)$ but $G[S]$ is not a rainbow~$K_3$.
Without loss of generality, we have $c(xy) = c(xz)$. 
By the definition of $H$, we may further assume that $xy$ is a double edge in $H$ and $zx \in E(H)$ but $xz \notin E(H)$. 
For each $z \in N^-(x)$, there is at most one such~$y$. 
Hence the number of $\{y,z\}$ such that $\delta^+(H[xyz]) \ge 1$ but $c(xy) = c(xz)$ is at most $d^-_H(x) \le n$.
So there are at most $3n^2$ such~$S$. 

Now suppose that $S$ contains a fixed vertex~$v$. 
If $v = x$, then the argument above implies that there are at most $d^-_H(v) \le n$ such~$S$. 
If $v = z$, then $S$ is uniquely defined once $x$ is chosen and so there are at most $n-1$ such~$S$. 
Finally if $v = y$, then there are $n$ choices for $x$ and $\sqrt{n}$ choices for~$z \in N^-_H(x)$ with $c(xz) = c(xy)$. 
Hence there are at most $ n +n+ n^{3/2} \le 3n^{3/2}$ such~$S$. 
\end{proof}

We are now ready to prove Lemma~\ref{lem:closed}.

\begin{proof}[Proof of Lemma~\ref{lem:closed}]
Let $1/n \ll \eta \ll 1/s, \eta'_R \ll \eta_R \ll 1/N \ll d,\eps' \ll \eps \ll 1 $ and let~$s_R=(s-1)/4$.

By Proposition~\ref{prop:conversion} and Corollary~\ref{cor:conversionK_3}, there exists a digraph~$H$ on~$V(G)$ such that $\delta^+(H) \ge (2/3+2\eps) n $ and there are at most $3n^2$ many $S \in \binom{V(H)}{3}$ such that $H[S] \in \mathcal{K}_{3,1}(H)$ but $G[S]$ is not a rainbow~$K_3$.
Moreover, for each $v \in V(H)$, there are at most $3n^{3/2}$ many $S \in \binom{V(H)}{3}$ such that $ v \in S$ and $H[S] \in \mathcal{K}_{3,1}(H)$ but $G[S]$ is not a rainbow~$K_3$.

Apply Lemma~\ref{lma:szemeredi} (with $\eps',H$ playing the roles of~$\eps,G$) and obtain a spanning subgraph~$H'$ of~$H$ and an $(\eps', d,m,k)$-regular partition $\mathcal{Q}= \{V_0, V_1,  \dots, V_k\}$ of~$G'$ with $\eps^{-1} \leq k \leq N$ and $\Delta ( H - H') \le  (d + \eps')n$.
Thus $\delta^+(H') \ge (2/3+ \eps)n$.
Apply Lemma~\ref{lma:reducedgraph} (with $H',\eps',2/3+\eps$ playing the roles of $G,\eps,\delta^+$) and obtain an $(\eps', d)$-reduced digraph~$R$ respecting~$(H',\mathcal{Q})$ satisfying $\delta^+(R) > 2k/3$.
By Lemma~\ref{lma:weaklyconnected}, $R$ is weakly $s_R$-connected.

We now show that most vertices in $V_i$ are close to most vertices in $V_j$. 

\begin{claim}
For all distinct $i,j \in [k]$, 
there exist $V'_{i,\{i,j\}} \subseteq V_i$ and $V'_{j,\{i,j\}} \subseteq V_j$ such that $|V'_{i,\{i,j\}}|, |V'_{j,\{i,j\}}| \ge (1- 4 \eps') m$ and every $v_i \in V'_{i,\{i,j\}}$ and $v_j \in V'_{j,\{i,j\}}$ are $(s_R,\eta_R)$-close. 
\end{claim}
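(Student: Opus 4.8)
I would fix distinct $i,j \in [k]$ and use weak $s_R$-connectedness of $R$ together with the counting lemma (Lemma~\ref{lma:counting}) to transfer a weak connector in $R$ into a genuine supply of $(x,x';K_3)$-connectors in $G$ for most pairs $(x,x') \in V_i \times V_j$. Since $R$ is weakly $s_R$-connected, for the clusters $V_i$ and $V_j$ there is a vertex multiset $W(V_i,V_j) = (V_{a_1},\dots,V_{a_{3s_R-1}})$ of clusters such that both $\{V_i\}\cup W$ and $\{V_j\}\cup W$ partition into triples each spanning a member of $\mathcal{K}_{3,1}(R)$. The idea is to realise this multiset of clusters as an actual vertex structure inside $G$: expand each cluster appearing with multiplicity $\mu$ into $\mu$ disjoint slices of size roughly $m/(3s_R)$ using Proposition~\ref{prop:slice}, so that the weak connector becomes an honest (non-repeating) connector pattern on the cluster level, still with all regularity and density guarantees (just with slightly worse parameters $2\eps'/c$ for some constant $c \sim 1/(3s_R)$).

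**Key steps in order.** First, I would record that $\delta^+(R) > 2k/3$ and invoke Lemma~\ref{lma:weaklyconnected} (already done in the proof before the claim) to get the weak $s_R$-connector between $V_i$ and $V_j$. Second, I would build a digraph $J$ on $3s_R+1$ vertices encoding the connector skeleton: vertices $u, u'$ (playing $V_i, V_j$) and $3s_R-1$ further vertices, with each of $\{u\}\cup W$ and $\{u'\}\cup W$ carrying the edges of the relevant copies of $\mathcal{K}_{3,1}$; note $\Delta(J)$ is bounded by an absolute constant (each vertex lies in at most two triples, so degree $\le 4$), and the map sending $J$'s vertices to the corresponding clusters is a homomorphism to $R$ (after the cluster-slicing, to a $(2\eps'/c, d-\eps')$-reduced subdigraph). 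Third, I would apply the counting lemma with $H = J$, the distinguished non-adjacent pair being $u, u'$ (they are non-adjacent in $J$ since $V_i, V_j$ need not be connected in $R$): this yields $V'_{i,\{i,j\}} \subseteq V_i$ and $V'_{j,\{i,j\}}\subseteq V_j$ of size at least $(1-3\Delta(J)\eps')m \ge (1-12\eps')m$ — which I would just write as $(1-4\eps')m$ after reserving a slightly smaller $\eps'$, or adjust the constant — such that for every $x \in V'_{i,\{i,j\}}$ and $x' \in V'_{j,\{i,j\}}$ there are at least $cn^{3s_R-1}$ injective homomorphisms from $J$ to $G$ mapping $u \mapsto x$, $u' \mapsto x'$. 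Fourth, I would discard those homomorphisms whose image contains one of the $\le 3(3s_R+1)n^{3/2} + 3s_R \cdot 3n^2$ many "bad" triples from Corollary~\ref{cor:conversionK_3} (those where $H[S]\in\mathcal{K}_{3,1}$ but $G[S]$ is not a rainbow $K_3$) — since $3s_R-1 \ge 2$, each bad triple kills at most $O(n^{3s_R-4})$ homomorphisms, and there are $O(n^2)$ of them, so $O(n^{3s_R-2}) \ll cn^{3s_R-1}$ are lost. What remains is $\ge \tfrac{c}{2}n^{3s_R-1}$ injective homomorphisms whose image, minus $\{x\}$ and minus $\{x'\}$, is a vertex set of size $3s_R-1$ that partitions into triples each forming a rainbow $K_3$ in $G$ with the appropriate endpoint added — i.e.\ an $(x,x';K_3)$-connector of length $s_R$. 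Taking $\eta_R \le c/2$ (after absorbing the $1/(3s_R-1)!$ overcount) gives the required $(s_R,\eta_R)$-closeness.

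**Main obstacle.** The delicate point is bookkeeping the slicing: a cluster $V_a$ may appear several times in the weak connector $W$, and a vertex may play the role of endpoint in one triple and a middle vertex in another, so I must be careful that the digraph $J$ I feed to the counting lemma is genuinely a fixed digraph on $3s_R+1$ vertices with bounded maximum degree and that the homomorphism to the (sliced) reduced digraph respects edge directions — including that double edges of $\mathcal{K}_{3,1}$ members land on $R^\pm$-edges, so I need $G^\pm[V_a,V_b]$ to be $\eps'$-regular of density $\ge d$, which is exactly what (Q4) and the definition of the reduced digraph provide, preserved under slicing by Proposition~\ref{prop:slice}. The second minor subtlety is that the counting lemma produces injective homomorphisms to $G$, but I actually want them injective to $G$ with images avoiding $\{x,x'\}$ and with the connector disjoint from any previously reserved vertices; the count $c n^{3s_R-1}$ has polynomial slack, so fixing $o(n)$ vertices to avoid costs only a lower-order term and poses no real difficulty. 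Everything else is routine parameter chasing in the hierarchy $\eta_R \ll 1/N \ll d, \eps' \ll \eps$.
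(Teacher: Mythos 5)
Your proposal matches the paper's argument: encode the weak $s_R$-connector as a bounded-degree digraph $J$ on $3s_R+1$ vertices with the natural homomorphism to $R$, apply Lemma~\ref{lma:counting} with $u,u'$ playing the roles of $V_i,V_j$ to obtain the large subsets, and subtract the $O(n^{|J|-3})$ embeddings that land on a bad triple via Corollary~\ref{cor:conversionK_3}. The cluster-slicing step is a detour the paper avoids and that you do not need: Lemma~\ref{lma:counting} is stated for an arbitrary (not necessarily injective) homomorphism $\varphi$ from $J$ to $R$, so repeats in the multiset $W(V_i,V_j)$ are handled automatically by sending distinct vertices of $J$ to the same cluster, while the lemma still delivers embeddings that are injective into $G$.
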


\begin{proofclaim}
Since $R$ is weakly $s_R$-connected, there is a vertex multiset $W (V_i,V_j) \subseteq V(R)$ of size~$|W (V_i,V_j)| = 3s_R-1$ such that, for $p \in \{i,j\}$, $V_p \cup W (V_i,V_j)$ can be partitioned into $S_{p,1}, \dots, S_{p,s_R}$ with $R[S_{p,q}] \in \mathcal{K}_{3,1}(R)$ for all~$q\in [s_R]$.
Define the auxiliary digraph~$J$ with $V(J) = V_i \cup V_j \cup W (V_i,V_j)$ and $E(J) = \bigcup_{p \in \{i,j\}} \bigcup_{q\in [s_R]} E(R[S_{p,q}])$.
Here we assume that the vertices of~$J$ are distinct, so $|V(J)| = 3s_R+1$, and if we say $V_i$ (or $V_j$), then we assume that $V_i \notin W (V_i,V_j)$ (or $V_j \notin W(V_i,V_j)$).
Thus $\Delta(J) \le 4$ and there is a natural homomorphism~$\varphi$ from $J$ to $R$. 

Apply Lemma~\ref{lma:counting} (with $J,V_i,V_j$ playing the roles of $H,u,u'$) and obtain
$V'_{i,\{i,j\}} \subseteq V_{i}$ and $V'_{j,\{i,j\}} \subseteq V_{j}$ with $|V'_{i,\{i,j\}} |, | V'_{ j,\{i,j\}} |\ge (1-12\eps)m $ such that, for every pair  $v_i \in V'_{i,\{i,j\}}$ and $v_j \in V'_{j,\{i,j\}}$, there are at least $2 \eta_R n^{|J| - 2}$ embeddings~$\varphi'$ of~$J$ in~$H$ such that $\varphi'(V_i)  \in V_i$, $\varphi'(V_j) = v_j$ and $\varphi'(U) \in U$ for $U \in W (V_i,V_j)$.

Consider $v_i \in V'_{i,\{i,j\}}$ and $v_j \in V'_{j,\{i,j\}}$.
If $\varphi' (W (V_i,V_j))$ is not a $(v_i,v_j)$-connector of length~$s_R$, then $G[S_{p,q}]$ is not a rainbow~$K_3$ for some $p \in \{i,j\}$ and~$q\in [s_R]$.
If $\{v_i, v_j\}  \cap \varphi(S_{p,q}) = \emptyset$, then there are at most $3n^2$ choices for $\varphi(S_{p,q})$, and $3n^{3/2}$ choices otherwise. 
Thus the number of embeddings~$\varphi$ such that $\varphi (W (V_i,V_j))$ is not a $(v_i,v_j)$-connector is at most 
\begin{align*}
2(s_R-1) \cdot 3n^2 \cdot n^{|J| -5} + 2 \cdot 3n^{3/2} \cdot n^{|J| -4} \le \eta_R n^{|J| - 2}.
\end{align*}
Thus $v_i$ and $v_j$ are $(s_R, \eta_R)$-close. 
\end{proofclaim}

Therefore $\bigcup_{i \in [k-1]} V'_{i,\{i,k\}}$ is $(2s_R,\eta_R')$-closed with
\begin{align*}
 \left| \bigcup_{i \in [k-1]} V'_{i,\{i,k\}} \right| \ge (1- 4 \eps') m (k-1) \ge (1-\eps )n . 
\end{align*}
Lemma~\ref{lem:almostclosed} implies that $G$ is $(4s_R+1, \eta)$-closed.
\end{proof}

\end{document}